\documentclass{article}
\usepackage{amsmath,amssymb,amsthm,graphicx,tikz,caption}

\usetikzlibrary{cd}

\tikzcdset{
  box edge/.style={dashed,
  "{}"{marking, circle, draw, solid, fill=white,
       inner sep=0pt, minimum size=4.4pt, pos=#1}},
  box edge/.default=0.5,
  boxtimes edge/.style={dashed,
                        "{}"{marking, circle, fill,
                             inner sep=0pt, minimum size=3.8pt, pos=#1}},
  boxtimes edge/.default=0.5,
}
\tikzcdset{
  inner loop/.style={loop, out=#1+30, in=#1-30, distance=5mm},
  outer loop/.style={loop, out=#1+30, in=#1-30, distance=11mm},
}

\newcommand{\ocircto}{%
  \mathrel{%
    \tikz[baseline=-0.5ex]{
      \draw[->, dashed, line width=.4pt] (0,0) -- (1.8em,0);
      \draw[fill=white] (.9em,0) circle (.8ex);
    }%
  }%
}

\newcommand{\bulletto}{%
  \mathrel{%
    \tikz[baseline=-0.5ex]{
      \draw[->, dashed, line width=.4pt] (0,0) -- (1.8em,0);
      \fill (.9em,0) circle (.8ex);
    }%
  }%
}

\title{Fixed points in abstract provability structures}
\author{Tsubasa Kumasaka\footnote{Email: 258x025x@stu.kobe-u.ac.jp}
\footnote{Graduate School of System Informatics, Kobe University, 1-1 Rokkodai, Nada, Kobe 657-8501, Japan.}
and Taishi Kurahashi\footnote{Email: kurahashi@people.kobe-u.ac.jp}
\footnote{Graduate School of System Informatics, Kobe University, 1-1 Rokkodai, Nada, Kobe 657-8501, Japan.}}
\date{}

\theoremstyle{plain}
\newtheorem{thm}{Theorem}[section]
\newtheorem*{thm*}{Theorem}
\newtheorem{lem}[thm]{Lemma}
\newtheorem{prop}[thm]{Proposition}
\newtheorem{cor}[thm]{Corollary}

\newtheorem*{fact*}{Fact}
\newtheorem{prob}[thm]{Problem}
\newtheorem*{prob*}{Problem}
\newtheorem*{cl*}{Claim}

\newtheorem*{scl*}{Subclaim}

\theoremstyle{definition}
\newtheorem{defn}[thm]{Definition}

\newcommand{\GL}{\mathbf{GL}}
\newcommand{\PR}{\mathrm{Pr}}

\begin{document}

\maketitle

\begin{abstract}
We study fixed points in abstract provability structures (APSs), which were introduced by Beklemishev and Shamkanov as an order-theoretic framework for studying G\"odel's second incompleteness theorem (G2).
For one-variable APS terms built from the provability and refutability operations $\Box$ and $\boxtimes$, we investigate the existence and uniqueness of fixed points in terms of their degree. 
The degree of a term is the number of occurrences of $\boxtimes$.
We prove that every term of degree two has a fixed point and establish a strict hierarchy among the fixed-point properties of the terms $\boxtimes\Box^k v$.
We further show that suitable levels of this hierarchy guarantee the existence and uniqueness of fixed points for arbitrary one-variable APS terms of positive degree.
We also obtain several G2-like non-refutability results, some of which depend only on whether the degree is even or odd.
Finally, we consider APSs based on meet-semilattices.
Under an additional condition, we obtain stronger fixed-point results and show that a parametrized fixed-point property implies an abstract form of L\"ob's theorem.
\end{abstract}

\section{Introduction}

The Fixed Point Theorem of formal arithmetic plays an important role in the proofs of G\"odel's incompleteness theorems (see, e.g., Boolos \cite{Bool93} and Lindstr\"om \cite{Lind03}). 
Let $T$ be a theory of arithmetic satisfying suitable conditions. 
The Fixed Point Theorem allows us to construct a sentence $\varphi$ such that $T \vdash \varphi \leftrightarrow \PR_T(\ulcorner \neg \varphi \urcorner)$, where $\PR_T(x)$ is a provability predicate of $T$.
Such a sentence $\varphi$ is used in the proof of G\"odel's first incompleteness theorem\footnote{Following Beklemishev and Shamkanov \cite{beklemishev2016some}, we call such a sentence G\"odel sentence, although it is sometimes called a Jeroslow sentence. }. 
By formalizing the argument showing the unprovability of $\neg \varphi$ in $T$, one obtains G\"odel's second incompleteness theorem (G2), which states that the consistency statement $\neg \PR_T(\ulcorner 0 = 1 \urcorner)$ is not provable in $T$. 
Moreover, one can prove L\"ob's theorem, which is a generalization of G2, stating that if $T \vdash \PR_T(\ulcorner \psi \urcorner) \to \psi$, then $T \vdash \psi$.

Interestingly, a fixed-point result can also be obtained from L\"ob's principle in the setting of provability logic.
The normal modal logic $\mathbf{GL}$ is obtained by adding the L\"ob axiom $\Box(\Box A\to A)\to\Box A$ to $\mathbf{K}$ and is the modal logic of provability (see Boolos \cite{Bool93} and Artemov and Beklemishev \cite{AB05}).
One of the early important results in provability logic is the de Jongh--Sambin Fixed Point Theorem \cite{Samb76}.
It states that, for every modal formula $A(p)$ in which every occurrence of $p$ is under the scope of $\Box$, there is a modal formula $B$ not containing $p$ such that $\GL\vdash B\leftrightarrow A(B)$.
Bernardi \cite{Bern76} also proved that such a fixed point is unique up to provable equivalence in $\GL$.
Thus, fixed-point phenomena are closely connected with L\"ob's theorem and incompleteness phenomena.

Beklemishev and Shamkanov \cite{beklemishev2016some} introduced the notion of \emph{Abstract Provability Structure} (APS) in order to clarify some of the essential structure behind G2. 
An APS is a preorder equipped with two unary operations $\Box$ and $\boxtimes$, which represent arithmetized provability and refutability, respectively. 
In ordinary arithmetic and provability logic, G2 is studied within a richer logical framework involving negation and implication. 
APSs remove most of this framework and keep only some basic interactions between provability and refutability that are needed for G2. 
Beklemishev and Shamkanov showed that even in this very weak setting, an analogue of G2 can be obtained if the existence of a G\"odelian fixed point is assumed. 
Here, a G\"odelian fixed point is an element $a$ of an APS satisfying $a = \boxtimes a$. 
In this way, APSs provide a framework for studying which parts of G2 follow only from the basic relations between provability and refutability, and conversely, which parts depend on the usual logical structure.
Their approach also clarifies the role of contraction in the proof of G2. 
They showed that, in a more general setting, the existence of G\"odelian fixed points alone does not imply G2 in the absence of contraction.

A natural question is what can be said about fixed points and G2-like phenomena using only these interactions between $\Box$ and $\boxtimes$.
In this paper, we study not only G\"odelian fixed points, but also fixed points of general one-variable terms in the language of APS.
In particular, we ask which terms have fixed points, what kinds of G2-like phenomena follow from the existence of fixed points, and what results can be obtained by enriching APSs with additional structures or conditions.
By studying these questions, we aim to clarify what can be obtained from the basic framework of APSs concerning fixed points and incompleteness.

We summarize the organization and main results of the paper.

Section 3 introduces basic notions and tools for studying fixed points in APSs.

In Section 4, we study the existence of fixed points.
We prove that every one-variable APS term of degree two has a fixed point in every APS and give explicit fixed points for such terms (Theorem \ref{2boxtimesfp}).
Here, the degree of a term is the number of occurrences of $\boxtimes$.
For terms of degree four, we obtain fixed points in several cases, while the general problem remains open.

Section 5 studies hierarchies of fixed-point properties.
For a one-variable APS term $\triangle v$, let $\mathsf{E}_S(\triangle)$ denote the existence of a fixed point of $\triangle v$ in an APS $S$. 
We show that the properties $\mathsf{E}_S(\boxtimes\Box^k)$ form the following hierarchy (Proposition \ref{prop_hierarchy}):
\[
    \mathsf{E}_S(\boxtimes)
    \Rightarrow
    \mathsf{E}_S(\boxtimes\Box)
    \Rightarrow
    \mathsf{E}_S(\boxtimes\Box^2)
    \Rightarrow\cdots,
\]
Moreover, this hierarchy is strict (Proposition \ref{prop_strict}).
We also prove that the existence of a fixed point at an appropriate level of this hierarchy guarantees both the existence and uniqueness of a fixed point for a much wider class of one-variable APS terms (Theorem \ref{thm_collapse}).
As a consequence, the existence of a G\"odelian fixed point implies that every one-variable APS term of positive degree has a unique fixed point (Corollary \ref{cor_hierarchy_godel}).

In Section 6, we investigate G2-like phenomena associated with these fixed-point properties.
We first show that $\mathsf{E}_S(\boxtimes\Box^k)$ implies the non-refutability of $\Box^k \boxtimes \top$ for every consistent APS $S$ (Proposition \ref{prop_g2_hierarchy}). 
More generally, for every one-variable APS term $\triangle v$ of positive degree, the non-refutability of $\triangle\top$ follows from $\mathsf{E}_S(\boxtimes\Box^k)$ for an appropriate $k$ (Proposition \ref{prop_ge_general}).
We then prove two general results that depend only on the parity of the degree.
For every consistent APS, if $\triangle v$ has even degree, then $\triangle\top$ is not refutable without any fixed-point assumption (Theorem \ref{thm_even_nonrefutable}).
If $\triangle v$ has odd degree, then every fixed point of $\triangle v$ is non-refutable (Theorem \ref{thm_odd_nonrefutable}).
On the other hand, we give an example showing that the existence of fixed points for all terms of degree $3$ does not in general imply the non-refutability of $\triangle\top$ for terms $\triangle v$ of odd degree (Proposition \ref{prop_no_g2}).

In Section 7, we consider APSs based on meet-semilattices.
We introduce a strengthening (C3*) of the condition (C3) of APS, which may be viewed as an abstract counterpart of the modal axiom $\mathbf{K}$.
For one-variable APS terms in the original language, we obtain a complete classification in terms of parity in meet APSs satisfying (C3*). 
That is, every term of even degree has a fixed point, while every term of odd degree has a fixed point exactly when a G\"odelian fixed point exists (Theorem \ref{thm_meet_parity}).
We then allow the meet operation in the formation of terms. 
We show that the existence of a G\"odelian fixed point already guarantees the existence of fixed points for all such one-variable meet APS terms (Proposition \ref{prop_meet_godel}).
We also obtain the existence of a fixed point for a class of meet APS terms of even degree satisfying a natural condition (Theorem \ref{thm_meet_even}).
Finally, we study an abstract version of L\"ob's theorem.
We show that a parametrized fixed-point condition for the two-variable term $\boxtimes(v\land w)$, together with (C3*), implies L\"ob's theorem (Theorem \ref{thm_lob}).
However, the converse fixed-point phenomenon fails. 
We show that there is a meet APS satisfying (C3*) and L\"ob's theorem in which no one-variable APS term of odd degree has a fixed point (Proposition \ref{prop_lob_nofp}).
This contrasts with the de Jongh--Sambin Fixed Point Theorem for $\mathbf{GL}$.

\section{Preliminaries on APSs}

In this section, we introduce \emph{abstract provability structures} and review some results of Beklemishev and Shamkanov \cite{beklemishev2016some}. 
They defined abstract provability structures using preorders. 
Throughout the present paper, since we are concerned only with their order-theoretic and fixed-point properties, we identify any two elements $x$ and $y$ such that $x\leq y$ and $y\leq x$. 
Thus, unlike in the original definition, we assume that the underlying relation is a partial order.

\begin{defn}[Abstract provability structure (APS)]
    A structure $S = \langle L, \leq, \top, \bot, \Box, \boxtimes\rangle$ is called an \emph{abstract provability structure} (APS) if the following conditions hold: 
    \begin{enumerate}
        \item $L$ is a nonempty set.
        \item $\leq$ is a partial order on $L$.
        \item $\top$ and $\bot$ are elements of $L$.
        \item $\Box$ and $\boxtimes$ are unary operations on $L$ satisfying the  following conditions for all $x, y \in L$:
        \begin{description}
            \item[(C1)] If $x \leq y$, then $\Box x \leq \Box y$ and $\boxtimes y \leq \boxtimes x$;
            \item[(C2)] $\top \leq \boxtimes \bot$;
            \item[(C3)] If $x \leq \Box y$ and $x \leq \boxtimes y$, then $x \leq \boxtimes \top$;
            \item[(C4)] $\boxtimes x \leq \Box \boxtimes x$.
        \end{description}
    \end{enumerate}
\end{defn}

The intended interpretation is that $x\leq y$ means that $y$ is derivable from $x$ over some formal system of arithmetic. 
$\Box x$ and $\boxtimes x$ are intended to represent the arithmetized provability and refutability of $x$, respectively.
The elements $\top$ and $\bot$ represent a provable sentence and a refutable sentence, respectively.
Despite the notation, we do not assume that $\top$ is the greatest element or that $\bot$ is the least element of $L$.

Condition (C1) expresses the monotonicity of provability and the antitonicity of refutability.
Condition (C2) corresponds to necessitation for provable sentences. 
Condition (C3) expresses that the combination of the provability and refutability of a sentence leads to inconsistency.
Condition (C4) says that refutability can be formally verified.

\begin{defn} 
Let $S = \langle L, \leq, \top, \bot, \Box, \boxtimes\rangle$ be an APS and let $x \in L$. 
    \begin{enumerate}  
        \item $x$ is \emph{provable} in $S$ if $\top \leq x$.
        \item $x$ is \emph{refutable} in $S$ if $x \leq \bot$.
        \item $S$ is \emph{inconsistent} if $\top \leq \bot$. 
        Otherwise, $S$ is \emph{consistent}.
         \item $x$ is a \emph{G\"odelian fixed point} of $S$ if $x = \boxtimes x$. 
   \end{enumerate}
\end{defn}

An abstract version of G2 and an abstract counterpart of its formalization are stated as follows.

\begin{thm}[{\cite[Theorem 2.5]{beklemishev2016some}}]\label{BS1}
    Suppose that an APS $S$ has a G\"odelian fixed point.
    \begin{enumerate}
        \item  If $S$ is consistent, then $\boxtimes \top$ is not refutable in $S$. 
        \item  $\boxtimes \boxtimes \top\leq \boxtimes \top$. 
    \end{enumerate}
\end{thm}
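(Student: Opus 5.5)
The whole argument rests on one preliminary inequality: the G\"odelian fixed point lies below $\boxtimes\top$. Once that is in place, both parts follow by a few applications of (C1) and (C2).

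Fix a G\"odelian fixed point $g$, so $g=\boxtimes g$. To get $g\leq\boxtimes\top$ I will apply (C3) with $x=g$ and $y=g$. The hypothesis $g\leq\boxtimes g$ holds trivially because $g=\boxtimes g$. For the hypothesis $g\leq\Box g$, use (C4): $g=\boxtimes g\leq\Box\boxtimes g=\Box g$. Then (C3) yields $g\leq\boxtimes\top$.

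For part 2, apply the antitonicity in (C1) to $g\leq\boxtimes\top$. This gives $\boxtimes\boxtimes\top\leq\boxtimes g=g$. Combining with $g\leq\boxtimes\top$ gives $\boxtimes\boxtimes\top\leq g\leq\boxtimes\top$, which is the claim.

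For part 1, argue by contraposition: assume $\boxtimes\top\leq\bot$ and derive $\top\leq\bot$. Applying (C1) to the assumption gives $\boxtimes\bot\leq\boxtimes\boxtimes\top$. Then (C2), part 2, and the assumption give the chain
\[
\top\leq\boxtimes\bot\leq\boxtimes\boxtimes\top\leq\boxtimes\top\leq\bot ,
\]
so $S$ is inconsistent. Hence if $S$ is consistent, $\boxtimes\top$ is not refutable.

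The only step needing any idea is choosing $y=g$ in (C3), which works because (C4) turns the fixed-point equation into $g\leq\Box g$. Everything else is routine use of (C1) and (C2).
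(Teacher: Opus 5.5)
Your proof is correct and is essentially the argument the paper uses for its generalizations, Proposition~\ref{prop_hierarchy}(1) and Proposition~\ref{prop_g2_hierarchy} at $k=0$; the paper itself only cites this theorem. The shared steps are: (C4) turns $g=\boxtimes g$ into $g\leq\Box g$, (C3) then gives $g\leq\boxtimes\top$, and the chain $\top\leq\boxtimes\bot\leq\boxtimes\boxtimes\top\leq\boxtimes\top\leq\bot$ yields part 1. One point in your favor: the paper's versions work under the standing assumption (C5) and upgrade to the equality $g=\boxtimes\top$, whereas you only use the inequality $g\leq\boxtimes\top$ plus antitonicity, so your argument needs no (C5), which matches the Section~2 setting of this statement.
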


Beklemishev and Shamkanov considered the following additional natural condition: 

\begin{description}
    \item[(C5)] $x \leq \top$ for all $x \in L$. 
\end{description}

\begin{thm}[{\cite[Theorem 2.6]{beklemishev2016some}}]\label{BS2}
    Suppose that an APS $S$ satisfies \textup{(C5)}. 
    \begin{enumerate}
    \item Every G\"odelian fixed point $p$ of $S$ satisfies $p = \boxtimes \top$. 
    
    \item If a G\"odelian fixed point exists, then $\boxtimes \boxtimes \top= \boxtimes \top$.
    \end{enumerate}
\end{thm}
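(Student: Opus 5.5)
The plan is to derive both parts directly from the axioms (C1)--(C5), with no appeal to Theorem \ref{BS1}. The idea for part 1 is to squeeze a G\"odelian fixed point $p$ between $\boxtimes\top$ from both sides. Part 2 will then follow from part 1.

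For part 1, let $p=\boxtimes p$.
\begin{enumerate}
\item For the lower bound, (C5) gives $p\leq\top$. Antitonicity of $\boxtimes$ in (C1) then yields $\boxtimes\top\leq\boxtimes p=p$.
\item For the upper bound, I will apply (C3) with $x=y=p$, which needs $p\leq\Box p$ and $p\leq\boxtimes p$.
\item The second of these holds trivially, since $p=\boxtimes p$.
\item For the first, (C4) gives $\boxtimes p\leq\Box\boxtimes p$. Rewriting both sides via $p=\boxtimes p$ gives $p\leq\Box p$.
\item Hence (C3) yields $p\leq\boxtimes\top$.
\end{enumerate}
Since $\leq$ is a partial order (we identify equivalent elements), antisymmetry gives $p=\boxtimes\top$.

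For part 2, suppose a G\"odelian fixed point $p$ exists. By part 1, $p=\boxtimes\top$. Substituting into $p=\boxtimes p$ gives $\boxtimes\top=\boxtimes\boxtimes\top$, which is the claim.

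There is no real obstacle here. The only step needing any insight is the upper bound $p\leq\boxtimes\top$, namely noticing that (C4) together with the fixed-point equation supplies the hypothesis $p\leq\Box p$ required by (C3). This mirrors the abstract G2 argument of Beklemishev and Shamkanov.
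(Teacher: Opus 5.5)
Your proof is correct and matches the paper's approach. The paper only cites this theorem from Beklemishev--Shamkanov, but it reproduces the same reasoning as the $k=0$ case of Proposition~\ref{prop_hierarchy}(1): the lower bound $\boxtimes\top\leq p$ via (C5) and (C1), and the upper bound $p\leq\boxtimes\top$ via (C4) and (C3) with $x=y=p$. Part~2 then follows by substituting $p=\boxtimes\top$ into $p=\boxtimes p$, exactly as you do.
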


Beklemishev and Shamkanov subsequently studied consequence relations with implication, derivability conditions, and structural rules. 
We do not pursue this direction here.

\section{Basic properties of fixed points in APS}

In this section, we begin our analysis of fixed points in APSs.
We first give some examples showing the role of (C5) and the independence of the two operations $\Box$ and $\boxtimes$.
We then introduce terminology and notation concerning fixed points and establish some basic properties that will be used throughout the paper.

\subsection{Examples}

We first show two examples of APSs concerning the role of (C5) in Theorem \ref{BS2} of Beklemishev and Shamkanov.

\begin{prop}\label{prop_eg1}
    There is an APS which does not satisfy \textup{(C5)} and has two distinct G\"odelian fixed points. 
    So, \textup{(C5)} cannot be omitted from the uniqueness theorem of G\"odelian fixed points in general.
\end{prop}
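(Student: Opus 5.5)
We will construct a small finite APS by hand and check the conditions directly. Before doing so, note what any such example must satisfy. Let $p$ be a Gödelian fixed point. By (C4), $p=\boxtimes p\leq\Box\boxtimes p=\Box p$, and trivially $p\leq\boxtimes p$. So (C3) forces $p\leq\boxtimes\top$. Hence both fixed points must lie below $\boxtimes\top$. Since (C5) is to fail, the simplest choice is to make $\boxtimes\top$ a new greatest element lying strictly above $\top$.

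Concretely, take $L=\{\bot,\top,p,q,r\}$ with the partial order in which $\bot$ is least, $r$ is greatest, and $\top,p,q$ are pairwise incomparable. Let $\Box$ be the constant map with value $r$. Define $\boxtimes$ by
\[
\boxtimes\bot=r,\quad \boxtimes\top=r,\quad \boxtimes p=p,\quad \boxtimes q=q,\quad \boxtimes r=\bot .
\]

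The key steps are the verifications, carried out in the following order.
\begin{itemize}
\item (C1): $\Box$ is constant, hence monotone. For antitonicity of $\boxtimes$, the only strict comparabilities are $\bot\leq x$ and $x\leq r$. The first requires $\boxtimes x\leq\boxtimes\bot=r$, which holds since $r$ is greatest. The second requires $\boxtimes r=\bot\leq\boxtimes x$, which holds since $\bot$ is least.
\item (C2): $\top\leq r=\boxtimes\bot$.
\item (C3): since $\boxtimes\top=r$ is the greatest element, $x\leq\boxtimes\top$ holds for every $x$, so the condition holds vacuously.
\item (C4): $\boxtimes x\leq r=\Box\boxtimes x$.
\end{itemize}

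Finally, $p$ and $q$ are distinct Gödelian fixed points by construction. Moreover, (C5) fails because $r\not\leq\top$. This is consistent with Theorem \ref{BS2}, whose proof must use (C5) to force $p=\boxtimes\top$.

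The only real obstacle is satisfying (C3), which constrains every Gödelian fixed point to lie below $\boxtimes\top$, as observed above. Taking $\boxtimes\top$ to be the top element $r\neq\top$ resolves this at once. Everything else is routine finite checking.
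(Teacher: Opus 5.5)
Your example is correct. (C1)--(C4) hold on all comparable pairs, and $p$ and $q$ are the only G\"odelian fixed points. Also $r\not\leq\top$ refutes (C5), and two distinct fixed points cannot both equal $\boxtimes\top=r$, so Theorem~\ref{BS2}(1) indeed fails without (C5).

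The mechanism is the same as in the paper. Its four-point model $\top\lneq p,q\lneq\bot$ has $\Box$ the identity, and $\boxtimes$ swaps $\top,\bot$ and fixes $p,q$. There too $\boxtimes\top$ is the greatest element, so (C3) becomes trivial.

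The difference is consistency. The paper's model has $\top\leq\bot$ and is therefore inconsistent. In yours $\bot$ is least and $\top\not\leq\bot$, so your APS is consistent. You pay with a fifth element $r$ placed above $\top$ to serve as $\boxtimes\top$. In return you get a slightly stronger conclusion: (C5) cannot be dropped from the uniqueness theorem even for consistent APSs.

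Your preliminary observation is also useful. By (C4) and (C3), every G\"odelian fixed point lies below $\boxtimes\top$. This explains why both constructions put $\boxtimes\top$ at the top.
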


\begin{proof} 
An example of an APS is given by a four-point model $\top \lneq p, q \lneq \bot$, where $p$ and $q$ are incomparable, $\boxtimes$ swaps $\top$ and $\bot$ and fixes $p$ and $q$, and $\Box$ is the identity. 
In this APS, $p$ and $q$ are distinct G\"odelian fixed points and are different from $\boxtimes \top$.

This APS is visualized in Figure \ref{fig1}. 
Throughout the present paper, APSs are illustrated by Hasse diagrams with arrows representing the operations $\Box$ and $\boxtimes$. 
The solid lines form the Hasse diagram of the underlying order, which increases upwards; the arrows $\ocircto$ and $\bulletto$ represent the operations $\Box$ and $\boxtimes$, respectively.

\begin{center}
        \begin{tikzcd}
            & \bot  \arrow[rd, no head] \arrow[ld, no head] \arrow[box edge, inner loop= 90 ] \arrow[dd, boxtimes edge, bend left = 30] & \\ 
            p  \arrow[rd, no head] \arrow[box edge, inner loop = 180] \arrow[boxtimes edge, outer loop= 180 ]&  & q \arrow[ld, no head] \arrow[box edge, inner loop = 0] \arrow[boxtimes edge, outer loop= 0 ]\\
            & \top \arrow[box edge, inner loop = 270]  \arrow[uu, boxtimes edge, bend left = 30]& 
        \end{tikzcd}
\captionof{figure}{APS for Proposition \ref{prop_eg1}}
\label{fig1}
\end{center}
\end{proof}

The following proposition shows that the assumption of the existence of a G\"odelian fixed point in the statement of Theorem \ref{BS2}(2) cannot be omitted.

\begin{prop}\label{prop_eg2}
    There is an APS which satisfies \textup{(C5)} but does not have a G\"odelian fixed point.
\end{prop}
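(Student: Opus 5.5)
The plan is to exhibit the smallest possible example: a two-element chain with $\boxtimes$ acting as a swap. Take $L=\{\top,\bot\}$ ordered by $\bot\lneq\top$, let $\Box$ be the identity, and let $\boxtimes$ swap the two elements, so $\boxtimes\top=\bot$ and $\boxtimes\bot=\top$. Since $\top$ is the top element of this chain, (C5) holds automatically. A swap on a two-element set has no fixed point, so there is no G\"odelian fixed point.

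It remains to check (C1)--(C4).

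\textbf{(C1).} Monotonicity of $\Box$ is trivial because $\Box$ is the identity. For antitonicity of $\boxtimes$, the only nontrivial instance is $\bot\leq\top$, which requires $\boxtimes\top=\bot\leq\top=\boxtimes\bot$; this holds.

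\textbf{(C2).} We need $\top\leq\boxtimes\bot=\top$, which is immediate.

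\textbf{(C4).} Since $\Box$ is the identity, the condition reads $\boxtimes x\leq\boxtimes x$, which is trivial.

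\textbf{(C3).} The hypothesis becomes $x\leq y$ and $x\leq\boxtimes y$, and we must conclude $x\leq\boxtimes\top=\bot$. Whichever value $y$ takes, one of $y$ and $\boxtimes y$ equals $\bot$. Hence the hypothesis already forces $x\leq\bot$, as required.

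The main obstacle is choosing $\Box$ so that (C3) and (C4) hold together. A first attempt such as a constant $\Box=\top$ makes (C4) trivial but breaks (C3): with $y=\bot$ the hypothesis would hold for $x=\top$, yet $\top\not\leq\bot$. Taking $\Box$ to be the identity avoids this, because then (C3) simply uses that $y$ and $\boxtimes y$ cannot both lie above $\top$. Once that choice is made, the remaining verification is a routine case check on two elements, and the paper's Hasse-diagram-with-arrows convention can depict the structure as in Figure~\ref{fig1}.
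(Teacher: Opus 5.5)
Your proposal is correct and uses exactly the paper's example: the two-point chain $\bot\lneq\top$ with $\Box$ the identity and $\boxtimes$ swapping $\top$ and $\bot$. The paper only states this model and notes $\boxtimes\boxtimes\top\neq\boxtimes\top$; your case check of (C1)--(C5) supplies the routine verification the paper leaves implicit.
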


\begin{proof}
An example of an APS is given by the two-point model $\bot \lneq \top$, where $\boxtimes$ swaps $\top$ and $\bot$, and $\Box$ is the identity. 
We have $\boxtimes \boxtimes \top \neq \boxtimes \top$. 

\begin{center}
        \begin{tikzcd}
             \top \arrow [d, no head] \arrow[box edge, inner loop = 90] \arrow[boxtimes edge, d , bend left=30, shift left = 2]\\ 
             \bot \arrow[box edge, inner loop = 270] \arrow[boxtimes edge, u, bend left = 30, shift left = 2 ]\\
        \end{tikzcd}
\captionof{figure}{APS for Proposition \ref{prop_eg2}}
\label{fig2}
\end{center}
\end{proof}

The next example shows that the two operations $\Box$ and $\boxtimes$ of APS should in general be regarded as independent, even when the underlying partial order is a Boolean algebra.

\begin{prop}\label{prop_eg3}
   There is an APS satisfying \textup{(C5)} based on a Boolean algebra such that there is an element $x$ with $\boxtimes x \neq \Box \neg x$. 
\end{prop}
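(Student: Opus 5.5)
The plan is to exhibit a very small concrete example and check the axioms by hand. I will use the two-element Boolean algebra $\{0,1\}$ with $0<1$, and put $\top=1$ and $\bot=0$. Since $\top$ is then the greatest element, \textup{(C5)} holds automatically. The point of the example is that $\boxtimes$ does not have to be tied to $\Box$ through negation. So I will take $\Box$ to be the identity and $\boxtimes$ to be the constant map with value $1$.

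The key steps, in order, are as follows.

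First, verify \textup{(C1)}. The identity is monotone, and a constant map is trivially antitone.

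Second, verify \textup{(C2)}, which here reads $1 \le \boxtimes 0 = 1$.

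Third, verify \textup{(C3)}. Since $\boxtimes\top = 1$ is the top element, the conclusion $x \le \boxtimes \top$ holds for every $x$, regardless of the hypotheses.

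Fourth, verify \textup{(C4)}. We have $\boxtimes x = 1$ and $\Box\boxtimes x = \Box 1 = 1$, so the inequality holds.

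Finally, take $x=1$. Then $\boxtimes 1 = 1$, while $\Box\neg 1 = \Box 0 = 0$. Hence $\boxtimes x \neq \Box\neg x$, as required.

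There is no real obstacle here; the main thing is choosing the operations so that \textup{(C3)} is harmless. My first attempt was to make $\Box$ constant and $\boxtimes$ the complement, but that violates \textup{(C3)}. This is why I let $\boxtimes$ be constantly $\top$ instead, which makes $\boxtimes\top$ the greatest element.

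If one prefers a consistent-looking or less degenerate example, the same idea extends to a four-element Boolean algebra. There one keeps $\boxtimes$ constantly $1$ and lets $\Box$ be any monotone map with $\Box 1 = 1$. For the statement as given, however, the two-element example suffices.
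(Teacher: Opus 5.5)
Your proof is correct, but it uses a different example from the paper's. With $\Box=\mathrm{id}$ and $\boxtimes$ constantly $1$ on $\{0,1\}$, conditions (C1)--(C5) hold: (C3) is automatic because $\boxtimes\top$ is the top element. The structure is even consistent, since $1\not\leq 0$, and $\boxtimes 1=1\neq 0=\Box\neg 1$.

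The paper instead uses the four-element Boolean algebra $\{\bot,p,\neg p,\top\}$. There $\Box p=\neg p$, $\Box$ fixes $\bot,\neg p,\top$, $\boxtimes\bot=\top$, and $\boxtimes$ sends every other element to $\bot$, so $\boxtimes p=\bot\neq\neg p=\Box\neg p$. In that structure, $\boxtimes$ and $\Box\neg$ agree on $\top$ and $\bot$, giving the ``expected'' value $\boxtimes\top=\bot$. They come apart only at a proper element, which is a more informative witness of the independence of the two operations. In your example, by contrast, $\boxtimes\top=\top$ and the mismatch already occurs at $\top$.

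The paper also reuses its structure. It shows the failure of $\Box p\land\Box\neg p\leq\Box(p\land\neg p)$, and in Proposition~\ref{prop_no_C3ast} the failure of (C3*). Your example satisfies both, since $\Box$ is the identity and $\boxtimes z=1$ for all $z$. So it proves the statement at hand with almost no checking, but it cannot serve those later purposes.

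Your closing remark has a small slip. In the four-element variant with $\boxtimes\equiv 1$, it is not enough for $\Box$ to be monotone with $\Box 1=1$. If $\Box$ is constantly $1$, then $\Box\neg x=1=\boxtimes x$ for every $x$, so you also need $\Box 0\neq 1$. Also, that variant is no less degenerate in the relevant sense, since it still has $\boxtimes\top=\top$.
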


\begin{proof} The desired APS is given in Figure~\ref{fig3}. We have $\boxtimes p = \bot \neq \neg p = \Box \neg p$.

\begin{center}
        \begin{tikzcd}
            & \top  \arrow[rd, no head] \arrow[ld, no head] \arrow[box edge, inner loop= 90 ] \arrow[dd, boxtimes edge, bend left = 30]& \\ 
            p  \arrow[rd, no head] \arrow[box edge, rr, bend left = 20] \arrow[boxtimes edge, rd, bend right = 20 ]&  & \neg p \arrow[ld, no head] \arrow[box edge, inner loop = 0] \arrow[boxtimes edge, bend left = 20 , ld]\\
            & \bot \arrow[box edge, inner loop = 270]  \arrow[uu, boxtimes edge, bend left = 30]& 
        \end{tikzcd}
\captionof{figure}{APS for Proposition \ref{prop_eg3}}\label{fig3}
\end{center}

    We note that this example also shows that APS does not contain the K axiom for $\Box$, since $\Box p \land \Box \neg p \not\leq \Box (p \land \neg p) $. 
\end{proof}

\subsection{Some basic properties}

From now on, unless explicitly stated otherwise, we assume that all APSs satisfy (C5). 
In this subsection, we collect several basic properties of APS and introduce some basic notions used throughout the present paper.

The following proposition is one of the basic properties of APS, which will be used repeatedly.

\begin{prop}\label{propbasicproperty}
Let $S = \langle L, \leq, \top, \bot, \Box, \boxtimes\rangle$ be an APS.
Then, the following statements hold: 
    \begin{enumerate}
        \item $\top = \boxtimes \bot = \Box \top$.

        \item For every $x \in L$, we have $\boxtimes \top \leq \boxtimes x \leq \Box^k \boxtimes x \leq \Box^{k+1} \boxtimes x$ for all $k \geq 0$. 
    \end{enumerate}
\end{prop}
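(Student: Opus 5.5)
The plan is to derive both items directly from the axioms (C1)--(C5), using antisymmetry of $\leq$ to turn pairs of inequalities into equalities.

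\textbf{Item 1.} For $\top=\boxtimes\bot$: (C2) gives $\top\leq\boxtimes\bot$, and (C5) gives $\boxtimes\bot\leq\top$, so antisymmetry yields the equality. For $\Box\top=\top$: (C5) gives $\Box\top\leq\top$. For the reverse inequality, I first show $\Box\boxtimes\bot=\top$. Indeed, (C4) with $x=\bot$ gives $\boxtimes\bot\leq\Box\boxtimes\bot$, hence $\top\leq\Box\boxtimes\bot$ by what was just shown, and (C5) gives the reverse. Since $\boxtimes\bot=\top$, this means $\Box\top=\Box\boxtimes\bot=\top$.

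\textbf{Item 2.} I will prove the chain link by link.

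\emph{First link.} From $x\leq\top$ (C5) and the antitonicity of $\boxtimes$ in (C1), we get $\boxtimes\top\leq\boxtimes x$.

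\emph{Middle and last links.} I induct on $k$ to show simultaneously that $\boxtimes x\leq\Box^k\boxtimes x$ and $\Box^k\boxtimes x\leq\Box^{k+1}\boxtimes x$.
\begin{itemize}
\item The base case $k=0$ for the first claim is trivial.
\item The base case $k=0$ for the second claim, $\boxtimes x\leq\Box\boxtimes x$, is exactly (C4).
\item For the inductive step, suppose $\Box^k\boxtimes x\leq\Box^{k+1}\boxtimes x$. Applying the monotonicity of $\Box$ from (C1) gives $\Box^{k+1}\boxtimes x\leq\Box^{k+2}\boxtimes x$.
\end{itemize}
Chaining these inequalities by transitivity yields $\boxtimes x\leq\Box^k\boxtimes x$ for every $k$.

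\textbf{Expected difficulty.} Nothing here is hard. The only step that needs a moment's thought is $\top\leq\Box\top$, since no axiom directly relates $\Box$ and $\top$. The trick is to rewrite $\top$ as $\boxtimes\bot$ so that (C4) applies.
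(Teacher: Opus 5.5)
Your proof is correct and follows essentially the same route as the paper. In both, (C2) and (C5) give $\top=\boxtimes\bot$, (C4) applied at $\bot$ together with (C5) gives $\Box\top=\top$, and item 2 comes from (C5) with the antitonicity of $\boxtimes$, then (C4) and repeated use of the monotonicity of $\Box$ (your ``simultaneous'' induction really only needs the second claim, since the first follows from it by transitivity, as you note).
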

\begin{proof}
1. $\top = \boxtimes \bot$ follows from (C2) and (C5). 
Then we have $\Box \top = \Box \boxtimes \bot$. 
Thus we obtain $\top = \boxtimes \bot \leq \Box \boxtimes \bot = \Box \top \leq \top$, where the first and second inequalities follow from (C4) and (C5), respectively. 

\medskip

2. By (C5), $x \leq \top$, and hence, by (C1),
$\boxtimes \top \leq \boxtimes x$.
By (C4), $\boxtimes x \leq \Box \boxtimes x$.
Repeated applications of (C1) then yield
$\Box^k \boxtimes x \leq \Box^{k+1} \boxtimes x$ for every $k \geq 0$.
\end{proof}

A \emph{one-variable APS term} is generated from the variable $v$ by finitely many applications of $\Box$ and $\boxtimes$. 
We regard each such one-variable APS term $\triangle v$ as a unary operation on $L$.
If $a \in L$, we then write $\triangle a$ for the value of the operation $\triangle$ at $a$.
We write compositions by concatenation, so that $\triangle_0 \triangle_1 v$ means $\triangle_0 (\triangle_1 v)$. 
For each natural number $n$, $\Box^n$ and $\boxtimes^n$ denote the $n$-fold iterations of $\Box$ and $\boxtimes$, respectively. 

\begin{defn}
The \emph{degree} $d(\triangle)$ of a one-variable APS term $\triangle v$ is the number of occurrences of $\boxtimes$ in $\triangle$. 
\end{defn}

Throughout the present paper, the parity of the degree plays an important role. 
Since (C1) states that $\Box$ and $\boxtimes$ are order-preserving and order-reversing, respectively, we obtain the following proposition which is easily proved by induction on the length of terms.

\begin{prop}\label{prop_parity}
Let $S$ be an APS and let $\triangle v$ be a one-variable APS term. 
\begin{enumerate}
    \item If $d(\triangle)$ is even, then $\triangle v$ is an order-preserving operation on $L$. 
    That is, for any $x, y \in L$, $x \leq y$ implies $\triangle x \leq \triangle y$. 

    \item If $d(\triangle)$ is odd, then $\triangle v$ is an order-reversing operation on $L$. 
    That is, for any $x, y \in L$, $x \leq y$ implies $\triangle y \leq \triangle x$. 
\end{enumerate}
\end{prop}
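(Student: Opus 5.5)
The plan is to argue by induction on the construction of the term $\triangle v$, or equivalently on its length. The induction will prove both parts of the statement at once. The invariant is that $\triangle v$ is order-preserving when $d(\triangle)$ is even and order-reversing when $d(\triangle)$ is odd.

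For the base case, $\triangle v$ is just $v$. Then $d(\triangle)=0$ is even, and the identity operation is trivially order-preserving.

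For the inductive step, a one-variable APS term of greater length has either the form $\Box\triangle' v$ or the form $\boxtimes\triangle' v$, where $\triangle' v$ is a shorter term.
\begin{itemize}
\item If $\triangle = \Box\triangle'$, then $d(\triangle)=d(\triangle')$. By (C1), $\Box$ is monotone, so for $x\le y$ we have $\triangle' x\le \triangle' y$ (resp.\ $\triangle' y\le\triangle' x$) by the induction hypothesis. Applying $\Box$ gives $\triangle x\le\triangle y$ (resp.\ $\triangle y\le \triangle x$). So the behaviour of $\triangle$ is the same as that of $\triangle'$, which matches the unchanged parity.
\item If $\triangle=\boxtimes\triangle'$, then $d(\triangle)=d(\triangle')+1$, so the parity flips. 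By (C1), $\boxtimes$ is antitone. Applying $\boxtimes$ to the inequality supplied by the induction hypothesis reverses it. Hence an order-preserving $\triangle'$ yields an order-reversing $\triangle$, and vice versa, matching the flipped parity.
\end{itemize}

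There is no real obstacle here. The only point needing care is to state the induction hypothesis so that it covers both parities simultaneously, since the $\boxtimes$ case passes from one parity to the other. Note also that only (C1) is used, so the result holds even without (C5).
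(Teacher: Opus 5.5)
Your proof is correct and follows the paper's approach: the paper notes that (C1) makes $\Box$ order-preserving and $\boxtimes$ order-reversing, and proves the statement by induction on the length of terms, as you do with a single induction hypothesis covering both parities. Your remark that only (C1) is used, so (C5) is not needed, is also accurate.
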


\begin{defn}
Let $S = \langle L, \leq, \top, \bot, \Box, \boxtimes\rangle$ be an APS and $\triangle v$ be a one-variable APS term.
\begin{enumerate}
    \item An element $p \in L$ is called a \emph{fixed point} of $\triangle v$ in $S$ if $\triangle p = p$. 

    \item The statement ``$\triangle v$ has a fixed point in $S$'' is denoted by $\mathsf{E}_S(\triangle)$.
    Here $\mathsf{E}$ stands for `existence'. 

    \item The statement ``$\triangle v$ has at most one fixed point in $S$'' is denoted by $\mathsf{U}_S(\triangle)$.
    Here $\mathsf{U}$ stands for `uniqueness'. 
\end{enumerate}
\end{defn}

The following method of ``circulation'' will be used repeatedly.
It allows us to move an initial part of a term to the end without changing the existence or uniqueness of its fixed points.

\begin{prop}\label{circulation}
Let $S$ be an APS and let $\triangle_0 v$ and $\triangle_1 v$ be one-variable APS terms. 
\begin{enumerate}
    \item $\mathsf{E}_S(\triangle_0 \triangle_1) \iff \mathsf{E}_S(\triangle_1 \triangle_0)$. 

    \item $\mathsf{U}_S(\triangle_0 \triangle_1) \iff \mathsf{U}_S(\triangle_1 \triangle_0)$. 
\end{enumerate}
\end{prop}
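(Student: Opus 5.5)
The plan is to use the standard ``rolling'' argument for compositions of maps: the functions $\triangle_0$ and $\triangle_1$ induce maps between the fixed-point sets of $\triangle_0\triangle_1$ and $\triangle_1\triangle_0$ that are mutually inverse.

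For (1), suppose $p$ is a fixed point of $\triangle_0\triangle_1 v$, so $\triangle_0\triangle_1 p = p$. Put $q = \triangle_1 p$. Then
\[
\triangle_1\triangle_0 q = \triangle_1(\triangle_0\triangle_1 p) = \triangle_1 p = q,
\]
so $q$ is a fixed point of $\triangle_1\triangle_0 v$. The converse direction is the same computation with the roles of $\triangle_0$ and $\triangle_1$ exchanged, now sending a fixed point $q$ of $\triangle_1\triangle_0 v$ to $\triangle_0 q$. Here we only use that one-variable APS terms are regarded as unary operations on $L$ and that concatenation denotes composition of those operations. No property of $\Box$ or $\boxtimes$ such as (C1)--(C5) is needed.

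For (2), let $F_{01}$ and $F_{10}$ be the sets of fixed points of $\triangle_0\triangle_1 v$ and $\triangle_1\triangle_0 v$. By the computation above, $\triangle_1$ restricts to a map $F_{01}\to F_{10}$ and $\triangle_0$ restricts to a map $F_{10}\to F_{01}$. These are mutually inverse: for $p\in F_{01}$ we have $\triangle_0(\triangle_1 p)=p$ by definition, and symmetrically $\triangle_1(\triangle_0 q)=q$ for $q \in F_{10}$. Hence $F_{01}$ and $F_{10}$ are in bijection, so they have the same cardinality and in particular one has at most one element iff the other does. If one prefers to avoid bijections, argue directly: if $p,p'\in F_{01}$ and $F_{10}$ has at most one element, then $\triangle_1 p=\triangle_1 p'$, hence $p=\triangle_0\triangle_1 p=\triangle_0\triangle_1 p'=p'$.

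There is no real obstacle. The only point needing care is to record that the correspondence is a genuine bijection, not merely a pair of maps in each direction, since the uniqueness clause requires injectivity. That injectivity is supplied by the identity $\triangle_0\triangle_1 p = p$ on fixed points.
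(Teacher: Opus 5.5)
Your proposal is correct and follows the paper's proof: part (1) is the same rolling computation, and your direct argument for (2) is the paper's injectivity argument via $p=\triangle_0\triangle_1 p$, with the roles of $\triangle_0$ and $\triangle_1$ exchanged. The bijection between the two fixed-point sets simply packages both directions into one statement.
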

\begin{proof}
1. Suppose that $\mathsf{E}_S(\triangle_0\triangle_1)$ holds and let $p$ be a fixed point of $\triangle_0\triangle_1 v$. 
Then, $\triangle_1\triangle_0(\triangle_1 p) = \triangle_1(\triangle_0\triangle_1 p) = \triangle_1 p$. 
Thus, $\triangle_1 p$ is a fixed point of $\triangle_1\triangle_0 v$, and so
$\mathsf{E}_S(\triangle_1\triangle_0)$ holds.
The converse follows by exchanging $\triangle_0$ and $\triangle_1$.

\medskip

2. Suppose that $\mathsf{U}_S(\triangle_0\triangle_1)$ holds.
Let $p$ and $q$ be fixed points of $\triangle_1\triangle_0 v$.
As in (1), we obtain $\triangle_0\triangle_1(\triangle_0 p) = \triangle_0 p$ and $\triangle_0\triangle_1(\triangle_0 q) = \triangle_0 q$.
By the uniqueness assumption, we have $\triangle_0 p=\triangle_0 q$.
Therefore, $p = \triangle_1\triangle_0 p = \triangle_1\triangle_0 q = q$.
Hence $\mathsf{U}_S(\triangle_1\triangle_0)$ holds.
The converse follows by exchanging $\triangle_0$ and $\triangle_1$.
\end{proof}

We next establish some elementary relations between the fixed-point properties of a term $\triangle v$ and those of its self-composition $\triangle\triangle v$.

\begin{prop}\label{prop_double}
Let $S$ be an APS and let $\triangle v$ be a one-variable APS term. 
Then 
\begin{enumerate}
    \item $\mathsf{E}_S(\triangle) \Rightarrow \mathsf{E}_S(\triangle \triangle)$. 
    
    \item $\mathsf{U}_S(\triangle \triangle) \ \&\ \mathsf{E}_S(\triangle \triangle) \Rightarrow \mathsf{U}_S(\triangle) \ \&\ \mathsf{E}_S(\triangle)$.

    \item $d(\triangle)$ is odd $\&\ \triangle \triangle \boxtimes \top = \triangle \boxtimes \top \Rightarrow \mathsf{U}_S(\triangle \triangle)$.
\end{enumerate}
\end{prop}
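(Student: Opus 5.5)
The first two items follow from the fixed-point definitions; the third needs the order-theoretic facts of Proposition \ref{propbasicproperty} and Proposition \ref{prop_parity}.

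For (1), let $p$ be a fixed point of $\triangle v$. Then $\triangle\triangle p = \triangle p = p$, so $p$ is also a fixed point of $\triangle\triangle v$.

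For (2), let $q$ be the unique fixed point of $\triangle\triangle v$.
\begin{itemize}
\item \emph{Existence.} By circulation (the argument of Proposition \ref{circulation}(1) with $\triangle_0=\triangle_1=\triangle$), $\triangle q$ is again a fixed point of $\triangle\triangle v$, since $\triangle\triangle(\triangle q)=\triangle(\triangle\triangle q)=\triangle q$. By uniqueness $\triangle q = q$, so $q$ is a fixed point of $\triangle v$ and $\mathsf{E}_S(\triangle)$ holds.
\item \emph{Uniqueness.} Any fixed point of $\triangle v$ is a fixed point of $\triangle\triangle v$ by (1), hence equals $q$. So $\mathsf{U}_S(\triangle)$ holds.
\end{itemize}

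For (3), assume $d(\triangle)$ is odd and $\triangle\triangle\boxtimes\top = \triangle\boxtimes\top$. Let $p$ be any fixed point of $\triangle\triangle v$. The strategy is to show $p = \triangle\boxtimes\top$, so that every such fixed point equals one fixed element.

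Since $d(\triangle)\ge 1$, the term $\triangle$ begins with some prefix $\Box^k$ followed by $\boxtimes$, i.e. $\triangle v = \Box^k\boxtimes\triangle' v$ for some $k\ge 0$ and some term $\triangle'$. By Proposition \ref{propbasicproperty}(2), $\boxtimes\top \le \triangle x$ for every $x$. By (C5), $\triangle x \le \top$ for every $x$.

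We now sandwich $p$ using Proposition \ref{prop_parity}: $\triangle$ is order-reversing (odd degree), and $\triangle\triangle$ is order-preserving (even degree).
\begin{itemize}
\item \emph{Lower bound.} Since $p=\triangle(\triangle p)$, we have $\boxtimes\top\le p$. Applying the order-preserving $\triangle\triangle$ gives $\triangle\triangle\boxtimes\top\le\triangle\triangle p=p$. By hypothesis this reads $\triangle\boxtimes\top\le p$.
\item \emph{Upper bound.} From $\boxtimes\top\le\triangle p$ and the order-reversing $\triangle$, we get $p=\triangle\triangle p\le\triangle\boxtimes\top$.
\end{itemize}
Antisymmetry gives $p=\triangle\boxtimes\top$ for every fixed point $p$, which proves $\mathsf{U}_S(\triangle\triangle)$.

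The step needing care is the lower bound $\boxtimes\top\le\triangle x$ for arbitrary $x$. It relies on decomposing $\triangle$ as $\Box^k\boxtimes\triangle'$ and applying Proposition \ref{propbasicproperty}(2). That decomposition is available only because the degree is positive, which the oddness hypothesis guarantees.
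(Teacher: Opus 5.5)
Your proposal is correct and follows essentially the same argument as the paper: (1) and (2) are the identical fixed-point manipulations, and for (3) you sandwich $p$ between $\triangle\boxtimes\top$ and itself using $\boxtimes\top\le\triangle x$ together with the monotonicity behaviour of $\triangle$. The only cosmetic difference is that you obtain the lower bound by applying the order-preserving $\triangle\triangle$ to $\boxtimes\top\le p$, whereas the paper applies the order-reversing $\triangle$ twice (via $\triangle p\le\triangle\boxtimes\top$), which amounts to the same step.
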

\begin{proof}
1. Suppose that $\mathsf{E}_S(\triangle)$ holds. 
Then, we find a fixed point $p$ of $\triangle v$. 
That is, $\triangle p = p$. 
Then $\triangle \triangle p = \triangle p = p$. 
Thus, $\mathsf{E}_S(\triangle \triangle)$ holds. 

\medskip

2. Suppose that $\mathsf{U}_S(\triangle \triangle)$ and $\mathsf{E}_S(\triangle \triangle)$ hold. 
Let $p$ be a fixed point of $\triangle \triangle v$. 
Then $\triangle \triangle p = p$. 
We have $\triangle \triangle \triangle p = \triangle p$, and hence $\triangle p$ is also a fixed point of $\triangle \triangle v$. 
From the uniqueness, we obtain $p = \triangle p$. 
Thus $\mathsf{E}_S(\triangle)$ holds. 
Since the proof of (1) shows that every fixed point of $\triangle v$ is also a fixed point of $\triangle \triangle v$, we also obtain that $\mathsf{U}_S(\triangle)$ holds. 

\medskip

3. Suppose that the degree of $\triangle v$ is odd and $\triangle \triangle \boxtimes \top = \triangle \boxtimes \top$ holds. 
Let $p \in L$ be any fixed point of $\triangle \triangle v$, that is, $\triangle \triangle p = p$. 
Since $\triangle v$ is of the form $\Box^k \boxtimes \triangle_0 v$, it follows from Proposition \ref{propbasicproperty}(2) that $\boxtimes \top \leq \triangle p$ and $\boxtimes \top \leq \triangle \triangle p = p$. 
Since $\triangle v$ is an order-reversing operation by Proposition \ref{prop_parity}(2), we have $p = \triangle \triangle p \leq \triangle \boxtimes \top$ and $\triangle p \leq \triangle \boxtimes \top$. 
The latter inequality yields $\triangle \boxtimes \top = \triangle \triangle \boxtimes \top \leq \triangle \triangle p = p$. 
Hence, we obtain $p = \triangle \boxtimes \top$. 
Since every fixed point of $\triangle \triangle v$ is equal to $\triangle \boxtimes \top$, we conclude that $\mathsf{U}_S(\triangle \triangle)$ holds. 
\end{proof}

\section{Existence of fixed points}\label{Sec_existence}

In this section, we study which one-variable APS terms have fixed points in every APS.
As shown in Proposition \ref{prop_parity}, the parity of the degree provides a natural distinction between one-variable APS terms. 
For terms of odd degree, fixed points are not guaranteed in general.
Indeed, as we show below, there is an APS in which no term of odd degree has a fixed point.
On the other hand, every term of even degree induces an order-preserving operation on APSs. 
Thus, if an APS $S$ is based on a complete lattice, then the Knaster--Tarski Fixed-Point Theorem (cf.~\cite{DP02}) guarantees that every term of even degree has a fixed point in $S$. 
In particular, if an APS $S$ is based on a finite lattice, then every term of even degree has a fixed point in $S$. 
This naturally raises the question whether the same conclusion holds for arbitrary APSs. 

Terms of degree zero always have fixed points, since every such term is of the form $\Box^k v$ and $\Box\top=\top$ by Proposition \ref{propbasicproperty}(1).
We first show that every term of degree two also has a fixed point.
We then show that fixed points are guaranteed for some terms of degree four. 
For arbitrary terms of even degree at least four, however, we do not know whether fixed points are guaranteed in general. 

\subsection{Terms of degree two}

Every one-variable APS term of degree two is of the form
\[
    \Box^k\boxtimes\Box^m\boxtimes\Box^n v
\]
for some natural numbers $k,m,n$. 
The following theorem shows that every such term has a fixed point. 
Moreover, a fixed point can be explicitly given in terms of $k,m,n$.

\begin{thm}\label{2boxtimesfp}
Let $S$ be an APS and let $\triangle v$ be a one-variable APS term of degree two. 
Then $\mathsf{E}_S(\triangle)$ holds. 
More precisely, the following statements hold.  
Let $k, m, n$ be natural numbers. 
 \begin{enumerate}
        \item If $m \geq n$, then $\boxtimes \Box^m \boxtimes \Box^n v$ has a fixed point $\boxtimes \top$.
        
        \item If $m < n$, then $\boxtimes \Box^m \boxtimes \Box^n v$ has a fixed point $\boxtimes \Box^m \boxtimes \top$.
        
        \item If $m \geq n + k$, then $\Box^k \boxtimes \Box^m \boxtimes \Box^n v$ has a fixed point $\Box^k \boxtimes \top$.
        
        \item If $m < n + k$, then $\Box^k \boxtimes \Box^m \boxtimes \Box^n v$ has a fixed point $ \Box^k \boxtimes\Box^m \boxtimes \top$. 
    \end{enumerate}
\end{thm}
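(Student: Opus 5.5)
The plan is to verify each claimed fixed point directly by proving two inequalities. Three facts do all the work. The first is Proposition \ref{propbasicproperty}(2), namely $\boxtimes\top\le\boxtimes x\le\Box^a\boxtimes x\le\Box^b\boxtimes x$ for $a\le b$. The second is monotonicity and antitonicity (C1). The third is (C3), which is the only way to force something \emph{below} $\boxtimes\top$. In every case, the inequality ``$\boxtimes\top\le$ term'' is immediate from Proposition \ref{propbasicproperty}(2). The real content is the reverse inequality, which I will get from (C3) with a well-chosen witness.

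\textbf{Case (1)} ($m\ge n$). Put $w=\boxtimes\Box^n\boxtimes\top$, $B=\Box^m w$ and $X=\boxtimes B$. We have $\boxtimes\top\le X$ trivially, and we must show $X\le\boxtimes\top$.
\begin{itemize}
\item First, $\Box^n\boxtimes\top\le\Box^n w\le\Box^m w=B$. The first step holds because $\boxtimes\top\le w$, and the second because $n\le m$.
\item Applying $\boxtimes$ gives $X\le w$.
\item Hence $X\le\Box^m X\le\Box^m w=B$.
\end{itemize}
Now apply (C3) with the witness $z=\Box^{m-1}w$ when $m\ge1$. Then $\Box z=B$, and $z\le B$ gives $X=\boxtimes B\le\boxtimes z$. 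So $X\le\boxtimes\top$. When $m=0$ we also have $n=0$, so $X=\boxtimes\boxtimes\boxtimes\top$. Here I take $z=\boxtimes\boxtimes\top$. Then $X=\boxtimes z$, and $X\le z\le\Box z$ because $\boxtimes\top\le\boxtimes\boxtimes\top$.

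\textbf{Case (3)} ($m\ge n+k$). The same argument should go through with $p=\Box^k\boxtimes\top$. Write $X'=\boxtimes\Box^m\boxtimes\Box^{n+k}\boxtimes\top$; the task is $X'\le\boxtimes\top$, because then $\Box^k X'\le\Box^k\boxtimes\top$. This is exactly case (1) with $n$ replaced by $n+k$, and the hypothesis $m\ge n+k$ is what is needed. Alternatively, one can observe that circulation (Proposition \ref{circulation}) relates $\Box^k\boxtimes\Box^m\boxtimes\Box^n v$ to $\boxtimes\Box^m\boxtimes\Box^{n+k}v$. The proof of that proposition shows that $\Box^k p$ is a fixed point whenever $p$ is, and this transfers the fixed point $\boxtimes\top$ to $\Box^k\boxtimes\top$.

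\textbf{Cases (2) and (4)} ($m<n$, resp.\ $m<n+k$). Here I expect a circulation to the ``other side.'' The term $\Box^k\boxtimes\Box^m\boxtimes\Box^n v$ circulates to $\Box^m\boxtimes\Box^{n+k}\boxtimes v$, by moving $\Box^k\boxtimes\Box^m$ to the end. For that term I will look for a fixed point of the form $\Box^m\boxtimes\top$. Verifying it means showing that $\boxtimes\Box^{n+k}\boxtimes\Box^m\boxtimes\top$ equals $\boxtimes\top$. This is case (1)-type reasoning with the roles of the exponents swapped, and it now requires $n+k\ge m$, which is our hypothesis. Pulling this fixed point back through the circulation map $p\mapsto\Box^k\boxtimes\Box^m p$ yields $\Box^k\boxtimes\Box^m\boxtimes\top$; for $k=0$ this gives the fixed point $\boxtimes\Box^m\boxtimes\top$ of case (2).

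The main obstacle is the (C3) step, that is, choosing a witness $z$ with $X\le\Box z$ and $X\le\boxtimes z$. The chain $X\le w$, $X\le\Box^m X\le\Box^m w$ is what makes it work, and the exponent comparison ($m\ge n$, or $n+k\ge m$ after circulation) is exactly where the case hypothesis enters. I would double-check the boundary cases with zero exponents separately, as done above for $m=0$.
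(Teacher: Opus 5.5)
Your proposal is correct and follows essentially the paper's proof. Case (1) goes through (C3) after establishing $\boxtimes\Box^m\boxtimes\Box^n\boxtimes\top\le\boxtimes\Box^n\boxtimes\top$, and cases (2)--(4) come from substituting into instances of (1). The paper takes the witness $y=\Box^m w$ and uses $X\le w\le\Box^{m+1}w$, which avoids your separate $m=0$ case.

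There is one small bookkeeping slip in cases (2) and (4). Moving $\Box^k\boxtimes$ (not $\Box^k\boxtimes\Box^m$) to the end gives $\Box^m\boxtimes\Box^{n+k}\boxtimes v$, and the transfer map is then $p\mapsto\Box^k\boxtimes p$. Either reading still yields the fixed point $\Box^k\boxtimes\Box^m\boxtimes\top$.
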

\begin{proof}
1. Suppose that $m \geq n$. 
Since Proposition \ref{propbasicproperty}(2) gives $\boxtimes \top \leq\boxtimes \Box^m \boxtimes \Box^n \boxtimes \top$, it suffices to show $\boxtimes \Box^m \boxtimes \Box^n \boxtimes \top\leq \boxtimes \top $. 

Using Proposition \ref{propbasicproperty}(2) again, we have $\boxtimes \top \leq \Box^{m-n} \boxtimes \Box^n \boxtimes \top$. 
Applying (C1) $n$ times, $\Box^n \boxtimes \top \leq\Box^{m} \boxtimes \Box^n \boxtimes \top$, and so $\boxtimes \Box^{m} \boxtimes \Box^n \boxtimes \top\leq \boxtimes \Box^n \boxtimes \top$ by (C1) again. 
Since $\boxtimes \Box^n \boxtimes \top \leq \Box \Box^{m}\boxtimes \Box^n \boxtimes \top$ by Proposition \ref{propbasicproperty}(2), we have 
\[
    \boxtimes \Box^{m} \boxtimes \Box^n \boxtimes \top\leq \Box \Box^{m}\boxtimes \Box^n \boxtimes \top. 
\]
Since $\boxtimes \Box^{m} \boxtimes \Box^n \boxtimes \top \leq \boxtimes \Box^{m} \boxtimes \Box^n \boxtimes \top$, we conclude $\boxtimes \Box^{m} \boxtimes \Box^n \boxtimes\top \leq \boxtimes \top$ by (C3). 

\medskip

2. Suppose that $m<n$. 
By (1), $\boxtimes \Box^n \boxtimes \Box^m \boxtimes \top= \boxtimes \top$. 
Applying the operation $\boxtimes \Box^m v$ to both sides, we obtain $\boxtimes \Box^m\boxtimes \Box^n (\boxtimes \Box^m \boxtimes \top) = \boxtimes \Box^m\boxtimes \top$. 

\medskip

3. Suppose that $m \geq n+k$. 
We have $\boxtimes \Box^m \boxtimes \Box^{n+k} \boxtimes \top = \boxtimes \top$ from (1). 
We then have $\Box^k\boxtimes \Box^m \boxtimes \Box^n (\Box^k \boxtimes \top) = \Box^k\boxtimes \top$. 

\medskip

4. Suppose that $m < n+k$. 
From (2), we have $\boxtimes \Box^m\boxtimes \Box^{n+k} \boxtimes \Box^m \boxtimes \top= \boxtimes \Box^m\boxtimes \top $. 
We then obtain $\Box^k\boxtimes \Box^m\boxtimes \Box^n (\Box^k \boxtimes \Box^m \boxtimes \top) = \Box^k \boxtimes \Box^m\boxtimes \top$. 
\end{proof}

Theorem \ref{2boxtimesfp} guarantees the existence of fixed points for terms of degree two, but it does not guarantee their uniqueness.

\begin{prop}
There is an APS $S$ satisfying the following conditions:
\begin{enumerate}
    \item Every one-variable APS term of even degree has two distinct fixed points in $S$. 

    \item Every one-variable APS term of odd degree has no fixed point in $S$. 
\end{enumerate}
\end{prop}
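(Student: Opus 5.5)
The plan is to reuse the two-point APS from Proposition \ref{prop_eg2}. Since we are assuming (C5) throughout, the example should satisfy it, and that structure does. Let $S$ be the chain $\bot \lneq \top$ with $\Box$ the identity and $\boxtimes$ swapping $\top$ and $\bot$. I will recheck the APS conditions explicitly.
\begin{itemize}
\item (C1): $\Box$ is the identity, hence monotone, and $\boxtimes$ is order-reversing on a two-element chain.
\item (C2): $\boxtimes\bot = \top$.
\item (C4): $\boxtimes x = \Box\boxtimes x$, since $\Box$ is the identity.
\item (C5): $\top$ is the greatest element.
\item (C3): the hypotheses become $x \leq y$ and $x \leq \boxtimes y$. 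Since $\{y, \boxtimes y\} = \{\top,\bot\}$, they force $x \leq \bot = \boxtimes\top$, which is the conclusion.
\end{itemize}

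The key observation is that, because $\Box$ is the identity, every one-variable APS term $\triangle v$ evaluates in $S$ to $\boxtimes^{d(\triangle)} v$. I would show this by a straightforward induction on the construction of $\triangle v$: deleting each occurrence of $\Box$ does not change the operation, and what remains is exactly $d(\triangle)$ applications of $\boxtimes$. Since $\boxtimes$ is an involution on $L$ (it swaps the two elements), $\boxtimes^{n}$ is the identity for even $n$ and is $\boxtimes$ itself for odd $n$.

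Both clauses of the proposition then follow at once.
\begin{enumerate}
\item If $d(\triangle)$ is even, $\triangle v$ is the identity on $L$. Hence both $\top$ and $\bot$ are fixed points, and they are distinct because $\bot \lneq \top$.
\item If $d(\triangle)$ is odd, $\triangle v$ is the swap. It fixes no element, since $\boxtimes\top = \bot \neq \top$ and $\boxtimes\bot = \top \neq \bot$.
\end{enumerate}

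There is no real obstacle here. The only point needing care is the reduction of an arbitrary term to $\boxtimes^{d(\triangle)}$, which depends on $\Box$ being literally the identity in $S$. The rest is a finite check.
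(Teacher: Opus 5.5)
Your proposal is correct and follows the paper's proof: the paper uses the same two-point APS from Proposition~\ref{prop_eg2} and makes the same observation, that even-degree terms act as the identity and odd-degree terms swap $\top$ and $\bot$. Your explicit check of (C1)--(C5) and your induction reducing every term to $\boxtimes^{d(\triangle)}$ simply spell out steps the paper leaves implicit.
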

\begin{proof}
Consider the two-element APS from Proposition \ref{prop_eg2}, illustrated in Figure \ref{fig2}.
Since $\Box$ is the identity and $\boxtimes$ interchanges $\bot$ and $\top$, every one-variable APS term of even degree is the identity map on $L$, and every one-variable APS term of odd degree interchanges $\bot$ and $\top$.
Hence every term of even degree has the two distinct fixed points $\bot$ and $\top$. 
Also no term of odd degree has a fixed point.
\end{proof}

\subsection{Terms of degree four}

The proof of Theorem \ref{2boxtimesfp} gives a complete analysis of the existence of fixed points for terms of degree two. 
It is natural to ask whether a similar argument works for terms of higher even degree. 
Already for degree four, however, a difficulty appears.

Every one-variable APS term of degree four is of the form
\[ 
    \Box^{k_0} \boxtimes\Box^{k_1} \boxtimes\Box^{k_2} \boxtimes\Box^{k_3} \boxtimes\Box^{k_4}v
\]
for some $k_0, k_1, k_2, k_3, k_4$. 
Applying Proposition \ref{circulation}, the existence of a fixed point of this term is equivalent to that of a fixed point of the following term: 
\[ 
    \boxtimes\Box^{k_1} \boxtimes\Box^{k_2} \boxtimes\Box^{k_3} \boxtimes\Box^{k_4 + k_0}v
\]
Moreover, applying Proposition \ref{circulation} again, it suffices to consider a term of the form
\[
    \boxtimes\Box^{k_1} \boxtimes\Box^{k_2} \boxtimes\Box^{k_3} \boxtimes\Box^{k_4}v
\]
where $k_1$ is maximal among $k_1, k_2, k_3, k_4$.
The possible orderings of $k_2, k_3$ and $k_4$ can be arranged in Table \ref{table_degree_four}. 

\begin{table}[ht]
\centering
\begin{tabular}{|c|c|c|}
 \hline
 & ordering & existence \\ 
 \hline
 1 & $k_4 \leq k_3 \leq k_2 \leq k_1$ & $\checkmark$ \\
 2 & $k_4 \leq k_2 \leq k_3 \leq k_1$ & $\checkmark$ \\
 3 & $k_2 \leq k_4 \leq k_3 \leq k_1$ & $\checkmark$ \\
 4 & $k_3 \leq k_4 \leq k_2 \leq k_1$ & ? \\
 5 & $k_3 \leq k_2 \leq k_4 \leq k_1$ & ? \\
 6 & $k_2 \leq k_3 \leq k_4 \leq k_1$ & ? \\ 
 \hline
\end{tabular}
\caption{Classification of terms of degree four}\label{table_degree_four}
\end{table}

In each of cases 1, 2, and 3 in Table \ref{table_degree_four}, we have that $k_1\geq k_2$ and $k_3\geq k_4$. 
Therefore it follows from Theorem \ref{2boxtimesfp}(1) that 
\[ \boxtimes\Box^{k_1}\boxtimes\Box^{k_2} \boxtimes \top = \boxtimes \top \quad \text{and} \quad \boxtimes\Box^{k_3}\boxtimes\Box^{k_4} \boxtimes \top = \boxtimes \top. 
\]
Therefore, 
\[
    \boxtimes\Box^{k_1} \boxtimes\Box^{k_2} \boxtimes\Box^{k_3} \boxtimes\Box^{k_4} \boxtimes \top = \boxtimes \top.
\]
Thus $\boxtimes \top$ is a fixed point.
On the other hand, in the remaining three cases, this argument does not apply. 

\begin{prob}\label{prob_even_degree}
For every one-variable APS term $\triangle$ of degree four, does $\mathsf{E}_S(\triangle)$ hold for all APSs $S$?
More generally, for every one-variable APS term $\triangle$ of even degree, does $\mathsf{E}_S(\triangle)$ hold for all APSs $S$?
\end{prob}

We conjecture that the first question already has a negative answer.

\section{Hierarchies of fixed-point properties}

In this section, we compare the fixed-point properties of one-variable APS terms.
We begin with terms of degree one. 
Every such term is of the form $\Box^m\boxtimes\Box^n v$ for some natural numbers $m, n$.
By Proposition \ref{circulation}, the existence and uniqueness of fixed points of this term are equivalent to those of $\boxtimes\Box^{m+n}v$.
Thus, it suffices to consider terms of the form $\boxtimes\Box^k v$.

The following proposition gives the basic hierarchy.

\begin{prop}\label{prop_hierarchy}
Let $S$ be any APS. 
For every natural number $k$, the following statements hold.
\begin{enumerate}
\item $\mathsf{E}_S(\boxtimes\Box^k) \iff \boxtimes\Box^k\boxtimes\top=\boxtimes\top$. 

\item $\mathsf{E}_S(\boxtimes\Box^k) \iff \mathsf{U}_S(\boxtimes\Box^k\boxtimes\Box^k)$.

\item $\mathsf{E}_S(\boxtimes\Box^k) \Rightarrow \mathsf{E}_S(\boxtimes\Box^{k+1})$.
\end{enumerate}
\end{prop}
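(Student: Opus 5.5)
The plan is to prove (1) first, by showing directly that any fixed point of $\boxtimes\Box^k v$ must equal $\boxtimes\top$. Parts (2) and (3) then reduce to the equation $\boxtimes\Box^k\boxtimes\top=\boxtimes\top$ together with Propositions \ref{prop_double} and \ref{propbasicproperty} and Theorem \ref{2boxtimesfp}.

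\emph{Part (1).} The direction ($\Leftarrow$) is immediate, since the equation says $\boxtimes\top$ is a fixed point. For ($\Rightarrow$), let $p=\boxtimes\Box^k p$ and apply (C3) with $x=p$ and $y=\Box^k p$, following the pattern of Theorem \ref{BS2}.
\begin{itemize}
\item We have $p\le\boxtimes y$, because $p=\boxtimes\Box^k p$.
\item We have $p\le\Box y=\Box^{k+1}p$. Indeed, Proposition \ref{propbasicproperty}(2) applied to $x=\Box^k p$ gives $\boxtimes\Box^k p\le\Box\boxtimes\Box^k p$. Rewriting both sides with $p=\boxtimes\Box^k p$, this is $p\le\Box p$. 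Iterating with (C1) then yields $p\le\Box^{k+1}p$.
\item Hence (C3) gives $p\le\boxtimes\top$.
\end{itemize}
Conversely, $\boxtimes\top\le\boxtimes\Box^k p=p$ by Proposition \ref{propbasicproperty}(2). So $p=\boxtimes\top$, and therefore $\boxtimes\Box^k\boxtimes\top=\boxtimes\top$. As a side remark, this argument also shows $\mathsf{U}_S(\boxtimes\Box^k)$.

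\emph{Part (2).} Put $\triangle v=\boxtimes\Box^k v$, which has odd degree.
\begin{itemize}
\item ($\Rightarrow$) By (1) we have $\triangle\boxtimes\top=\boxtimes\top$. Hence $\triangle\triangle\boxtimes\top=\triangle\boxtimes\top$, and Proposition \ref{prop_double}(3) gives $\mathsf{U}_S(\triangle\triangle)$.
\item ($\Leftarrow$) Theorem \ref{2boxtimesfp}(1) with $m=n=k$ shows that $\mathsf{E}_S(\triangle\triangle)$ always holds. Combining this with the assumed $\mathsf{U}_S(\triangle\triangle)$, Proposition \ref{prop_double}(2) yields $\mathsf{E}_S(\triangle)$.
\end{itemize}

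\emph{Part (3).} By (1), it suffices to derive $\boxtimes\Box^{k+1}\boxtimes\top=\boxtimes\top$ from $\boxtimes\Box^{k}\boxtimes\top=\boxtimes\top$.
\begin{itemize}
\item The inequality $\boxtimes\top\le\boxtimes\Box^{k+1}\boxtimes\top$ holds by Proposition \ref{propbasicproperty}(2).
\item For the reverse, Proposition \ref{propbasicproperty}(2) gives $\Box^k\boxtimes\top\le\Box^{k+1}\boxtimes\top$. Applying $\boxtimes$ and using (C1) reverses this to $\boxtimes\Box^{k+1}\boxtimes\top\le\boxtimes\Box^k\boxtimes\top$, and the right-hand side equals $\boxtimes\top$ by hypothesis.
\end{itemize}

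I expect the main obstacle to be part (1), specifically choosing $y=\Box^k p$ in (C3) and establishing $p\le\Box^{k+1}p$ from (C4) via the fixed-point equation. Once (1) is in place, parts (2) and (3) are routine applications of the earlier results.
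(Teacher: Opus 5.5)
Your proposal is correct and follows the paper's proof essentially step for step: (C3) with $y=\Box^k p$ for part (1), Theorem \ref{2boxtimesfp} together with Proposition \ref{prop_double}(2) and (3) for part (2), and $\Box^k\boxtimes\top\le\Box^{k+1}\boxtimes\top$ combined with the antitonicity of $\boxtimes$ for part (3). The only cosmetic difference is that you obtain $p\le\Box^{k+1}p$ by iterating $p\le\Box p$, whereas the paper reads it off directly from Proposition \ref{propbasicproperty}(2).
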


\begin{proof}
1. $(\Rightarrow)$: 
Suppose that $\mathsf{E}_S(\boxtimes \Box^k)$ holds. 
Let $p \in L$ be a fixed point of $\boxtimes \Box^k v$, that is, $\boxtimes\Box^k p = p$.
By Proposition \ref{propbasicproperty}(2), $\boxtimes\top \leq\boxtimes\Box^k p=p$.

On the other hand, by Proposition \ref{propbasicproperty}(2), $p = \boxtimes\Box^k p \leq \Box^{k+1} \boxtimes \Box^k p = \Box \Box^k p$.
Since $p \leq \boxtimes\Box^k p$, (C3) gives $p\leq\boxtimes\top$.
Therefore $p=\boxtimes\top$.
Hence $\boxtimes\Box^k\boxtimes\top=\boxtimes\top$. 

\medskip

$(\Leftarrow)$: Trivial. 

\medskip

2. $(\Rightarrow)$: Suppose that $\mathsf{E}_S(\boxtimes\Box^k)$ holds.
By (1), $\boxtimes\Box^k\boxtimes \top=\boxtimes\top$.
Applying $\boxtimes \Box^k$ to both sides, we also have $\boxtimes\Box^k\boxtimes\Box^k\boxtimes\top = \boxtimes\Box^k\boxtimes\top$. 
Since the degree of $\boxtimes\Box^k$ is odd, by Proposition \ref{prop_double}(3), we conclude that $\mathsf{U}_S(\boxtimes\Box^k\boxtimes\Box^k)$ holds. 

\medskip

$(\Leftarrow)$: Suppose that $\mathsf{U}_S(\boxtimes\Box^k\boxtimes\Box^k)$ holds.
Since the degree of $\boxtimes \Box^k\boxtimes \Box^k v$ is two, by Theorem~\ref{2boxtimesfp}, $\mathsf{E}_S(\boxtimes \Box^k\boxtimes \Box^k)$ holds. 
Then by Proposition \ref{prop_double}(2), we conclude that $\mathsf{E}_S(\boxtimes \Box^k)$ holds. 

\medskip

3. Suppose that $\mathsf{E}_S(\boxtimes\Box^k)$ holds.
By (1), $\boxtimes\Box^k\boxtimes\top=\boxtimes\top$.
By (C4) and (C1), $\Box^k\boxtimes\top \leq \Box^{k+1}\boxtimes\top$.
Therefore, by (C1), $\boxtimes\Box^{k+1}\boxtimes\top \leq \boxtimes\Box^k\boxtimes\top = \boxtimes\top$.
Since $\boxtimes \top \leq \boxtimes\Box^{k+1}\boxtimes\top$ by Proposition \ref{propbasicproperty}(2), we obtain $\boxtimes\Box^{k+1}\boxtimes\top=\boxtimes\top$. 
Thus $\mathsf{E}_S(\boxtimes\Box^{k+1})$ holds.
\end{proof}

From the proof of Proposition \ref{prop_hierarchy}, we obtain the following corollary. 

\begin{cor}\label{cor_hierarchy}
Let $S$ be any APS and $k \geq 0$. 
Suppose that $\mathsf{E}_S(\boxtimes \Box^k)$ holds. 
Then $\mathsf{U}_S(\boxtimes \Box^k)$ holds. 
Furthermore, $\boxtimes\top$ is the unique fixed point of $\boxtimes \Box^k v$.
\end{cor}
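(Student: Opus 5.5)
The corollary is essentially already contained in the proof of Proposition \ref{prop_hierarchy}(1), direction $(\Rightarrow)$. I will extract that argument and reuse it for an arbitrary fixed point. Assume $\mathsf{E}_S(\boxtimes\Box^k)$. By Proposition \ref{prop_hierarchy}(1), $\boxtimes\Box^k\boxtimes\top=\boxtimes\top$, so $\boxtimes\top$ is itself a fixed point of $\boxtimes\Box^k v$. It therefore remains to show that every fixed point equals $\boxtimes\top$; uniqueness and the identification of the fixed point then follow together.

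Let $p$ be any fixed point, so $\boxtimes\Box^k p=p$. First, I get the lower bound: since $p=\boxtimes(\Box^k p)$, Proposition \ref{propbasicproperty}(2) (which uses (C5) via (C1)) gives $\boxtimes\top\leq\boxtimes\Box^k p=p$.

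Next, I get the upper bound from (C3). Proposition \ref{propbasicproperty}(2) with $x=\Box^k p$ and $k+1$ in place of $k$ yields $p=\boxtimes\Box^k p\leq\Box^{k+1}\boxtimes\Box^k p=\Box^{k+1}p=\Box(\Box^k p)$. Trivially $p\leq\boxtimes(\Box^k p)$. Applying (C3) with $x=p$ and $y=\Box^k p$ gives $p\leq\boxtimes\top$. By antisymmetry, which holds because we work with a partial order, $p=\boxtimes\top$.

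Hence every fixed point of $\boxtimes\Box^k v$ coincides with $\boxtimes\top$, which proves $\mathsf{U}_S(\boxtimes\Box^k)$ and shows that $\boxtimes\top$ is the unique fixed point. There is no real obstacle here. The only point that needs care is to choose $y=\Box^k p$ when applying (C3), so that both hypotheses $x\leq\Box y$ and $x\leq\boxtimes y$ hold for the same $y$; the step $p\leq\Box^{k+1}p$ is exactly where (C4) enters, through Proposition \ref{propbasicproperty}(2).
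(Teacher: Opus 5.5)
Your proof is correct and is essentially the paper's argument: the corollary is read off from the $(\Rightarrow)$ direction of Proposition~\ref{prop_hierarchy}(1). That argument shows that an arbitrary fixed point $p$ satisfies $\boxtimes\top\leq p$, and, via (C4) and then (C3) with $y=\Box^k p$, also $p\leq\boxtimes\top$. Your separate appeal to Proposition~\ref{prop_hierarchy}(1) to show that $\boxtimes\top$ is a fixed point is harmless but redundant, since the given fixed point must itself equal $\boxtimes\top$.
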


Proposition \ref{prop_hierarchy} can be summarized by the following hierarchy. 

\[
\begin{array}{ccccc}
\boxtimes \boxtimes \top = \boxtimes \top & \iff & \mathsf{E}_S(\boxtimes) & \iff & \mathsf{U}_S(\boxtimes^2) \\
\Downarrow & & & & \\
\boxtimes \Box \boxtimes \top = \boxtimes \top & \iff & \mathsf{E}_S(\boxtimes \Box) & \iff & \mathsf{U}_S(\boxtimes \Box \boxtimes \Box)  \\
\Downarrow & & & & \\
\boxtimes \Box^2 \boxtimes \top = \boxtimes \top & \iff & \mathsf{E}_S(\boxtimes \Box^2) & \iff& \mathsf{U}_S(\boxtimes \Box^2 \boxtimes \Box^2) \\
\Downarrow & & & & \\ 
\vdots & & & &
\end{array}
\]

The implications in this hierarchy are all strict.

\begin{prop}\label{prop_strict}
For every natural number $k$, there exists an APS $S$ such that $\mathsf{E}_S(\boxtimes\Box^{k+1})$ holds but $\mathsf{E}_S(\boxtimes\Box^k)$ does not.
\end{prop}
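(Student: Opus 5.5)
By Proposition~\ref{prop_hierarchy}(1), $\mathsf{E}_S(\boxtimes\Box^j)$ holds exactly when $\boxtimes\Box^j\boxtimes\top=\boxtimes\top$. So it suffices to build, for each $k$, a finite APS satisfying (C5) in which $\boxtimes\Box^{k+1}\boxtimes\top=\boxtimes\top$ but $\boxtimes\Box^{k}\boxtimes\top\neq\boxtimes\top$.

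First I note why a chain cannot work. Write $a_0=\boxtimes\top$. In a chain, (C3) says that $\min(\Box y,\boxtimes y)\le a_0$ for every $y$. Take $y=\Box^k a_0$. We need $\boxtimes y>a_0$, since $\boxtimes y\geq a_0$ always holds by Proposition~\ref{propbasicproperty}(2) and we want $\boxtimes y\neq a_0$. Then (C3) forces $\Box y\le a_0$. But $\Box y=\Box^{k+1}a_0\geq\Box a_0\geq a_0$, and in the intended model $\Box^{k+1}a_0$ sits strictly above $a_0$. So the value $\boxtimes\Box^k a_0$ must sit on a side branch, incomparable with the $\Box$-orbit of $a_0$.

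The concrete construction I would use is as follows. Let
\[L=\{\bot,\top,e,b_0,b_1,\dots,b_{k+1}\},\]
ordered by $\bot<b_0<b_1<\dots<b_{k+1}<\top$ and $b_0<e<\top$, with $e$ incomparable to $b_1,\dots,b_{k+1}$. Define $\Box$ by
\[\Box\bot=\bot,\quad \Box b_i=b_{i+1}\ (i\le k),\quad \Box b_{k+1}=b_{k+1},\quad \Box e=\top,\quad \Box\top=\top.\]
Define $\boxtimes$ by
\[\boxtimes\bot=\top,\quad \boxtimes\top=b_0,\quad \boxtimes e=b_0,\quad \boxtimes b_i=e\ (i\le k),\quad \boxtimes b_{k+1}=b_0.\]
With these definitions $\Box^j\boxtimes\top=b_j$. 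Hence $\boxtimes\Box^{k+1}\boxtimes\top=\boxtimes b_{k+1}=b_0=\boxtimes\top$, while $\boxtimes\Box^k\boxtimes\top=\boxtimes b_k=e\neq b_0$. Proposition~\ref{prop_hierarchy}(1) then gives $\mathsf{E}_S(\boxtimes\Box^{k+1})$ and $\neg\mathsf{E}_S(\boxtimes\Box^k)$.

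The remaining work, which is routine but is where errors would show up, is to verify the axioms:
\begin{itemize}
\item (C5) and (C2) are immediate.
\item (C1): $\Box$ is monotone along the chain, and $b_0<e$ gives $\Box b_0=b_1\le\top=\Box e$. For $\boxtimes$, the values on the chain $b_0,\dots,b_k,b_{k+1},\top$ are $e,\dots,e,b_0,b_0$, which is antitone. The side comparisons $b_0<e<\top$ give $e\ge b_0\ge b_0$, which is also fine.
\item (C4) follows case by case: $\top\le\Box\top$, $b_0\le b_1$, $e\le\Box e=\top$, and $b_0\le\Box b_0$.
\end{itemize}

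The main check is (C3): whenever $x\le\Box y$ and $x\le\boxtimes y$, we need $x\le b_0$. If $\boxtimes y\in\{b_0\}$, this is trivial. If $y=\bot$, then $\Box y=\bot$. If $y=b_i$ with $i\le k$, then $x$ is a common lower bound of $b_{i+1}$ and $e$, and the only such elements are $\bot$ and $b_0$. This last case is exactly where the incomparability of $e$ with $b_1,\dots,b_{k+1}$ is used. The case $k=0$ is covered by the same picture: there $\boxtimes b_0=e$ and $\Box b_0=b_1$.
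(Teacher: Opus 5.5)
Your proposal is correct and is essentially the paper's construction: the same poset, with a side element $e$ (the paper's $c$) above $b_0$ and incomparable to $b_1,\dots,b_{k+1}$, the same $\boxtimes$, and the same reduction via Proposition~\ref{prop_hierarchy}(1). The only difference is that you set $\Box b_{k+1}=b_{k+1}$ where the paper sets $\Box b_{k+1}=\top$, which is harmless. Your explicit axiom check, in particular the (C3) case $y=b_i$ with $i\le k$, supplies the verification the paper leaves to the reader.
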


\begin{proof}
Fix a natural number $k$.
Consider the structure given in Figure~\ref{fig4}.

\begin{center}

        \begin{tikzcd}
    & \top \arrow[ld, no head] \arrow[rdd, no head] \arrow[box edge, inner loop = 90]
      \arrow[ddddd, boxtimes edge, bend right = 10, shift right = 2]& \\
    b_{k+1} = \Box^{k+1} \boxtimes \top \arrow[no head, d]
      \arrow[box edge, ur, bend left = 20]
      \arrow[boxtimes edge, ddddr, bend left = 10, shift left = 0.5]&  &  \\
    b_{k} = \Box^{k}\boxtimes \top \arrow[d, no head] \arrow[box edge, u, bend left = 30]
      \arrow[boxtimes edge, rr, bend left = 30]& & c = \boxtimes \boxtimes \top
      \arrow[no head, dddl] \arrow[boxtimes edge, dddl, bend right = 20, shift right = 1]
      \arrow[box edge, luu, bend right = 20]\\
    \vdots \arrow[boxtimes edge, rru, bend left = 30] \arrow[d, no head]
      \arrow[box edge, u, bend left = 30]  & & \\
    b_1 = \Box \boxtimes \top \arrow[no head, rd] \arrow[box edge, u, bend left = 30]
      \arrow[boxtimes edge, rruu, bend left = 30]&  &   \\
     & b_0 = \boxtimes \top \arrow[no head, d] \arrow[box edge, ul, bend right = 20]
       \arrow[boxtimes edge, uuur, bend right = 20, shift right = 1]& \\
     & \bot \arrow[boxtimes edge, bend right = 50, uuuuuu, shift right = 20, looseness = 1.6] \arrow[box edge, inner loop = 270]&
\end{tikzcd}

\captionof{figure}{APS for Proposition \ref{prop_strict}}\label{fig4}
\end{center}

Here, $\top$ is the greatest element, $\bot$ is the least element, $b_0 < b_1 < \cdots < b_{k+1}$ forms a chain, and $c$ lies above $b_0$ but is incomparable with each of $b_1, \dots, b_{k+1}$. 

The operations are defined as follows:
\begin{itemize}
    \item $\Box \top = \top$ and $\boxtimes \top = b_0$;
    \item $\Box \bot = \bot$ and $\boxtimes \bot = \top$;
    \item $\Box b_i = b_{i+1}$ and $\boxtimes b_i = c$ for $0 \leq i \leq k$;
    \item $\Box b_{k+1} = \top$ and $\boxtimes b_{k+1} = b_0$;
    \item $\Box c = \top$ and $\boxtimes c = b_0$.
\end{itemize}

It is verified that this structure is an APS.
In this APS, $\boxtimes\Box^{k+1}\boxtimes\top = \boxtimes b_{k+1} = b_0 = \boxtimes\top$ and then $\mathsf{E}_S(\boxtimes\Box^{k+1})$ holds. On the other hand, $\boxtimes\Box^{k}\boxtimes\top = \boxtimes b_k = c \neq b_0 = \boxtimes\top$ and so $\mathsf{E}_S(\boxtimes\Box^{k})$ fails by Proposition \ref{prop_hierarchy}(1).
\end{proof}

The hierarchy above plays a central role in the analysis of fixed points of general one-variable APS terms.
Indeed, the following theorem, together with Proposition \ref{circulation}, shows that the existence and uniqueness of a fixed point of a term of positive degree can be derived from an appropriate level of this hierarchy.

\begin{thm}\label{thm_collapse}
Let $S$ be an APS and let $\triangle v$ be a one-variable APS term of the form $\boxtimes \Box^{k_1} \boxtimes \Box^{k_2} \cdots \boxtimes \Box^{k_n} v$. 
Let $k = \min \{k_1, k_2, \ldots, k_n\}$.
Suppose that $\mathsf{E}_S(\boxtimes \Box^k)$ holds. 
Then $\mathsf{E}_S(\triangle)$ and $\mathsf{U}_S(\triangle)$ hold. 
Furthermore, $\boxtimes\top$ is the unique fixed point of $\triangle v$.
\end{thm}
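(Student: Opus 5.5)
The plan is to show first that $\boxtimes\top$ is a fixed point of $\triangle v$, and then that every fixed point of $\triangle v$ equals $\boxtimes\top$. Both steps rely on one identity for each block $\boxtimes\Box^{k_i}$:
\[
\boxtimes\Box^{k_i}\boxtimes\top=\boxtimes\top \quad (1 \leq i \leq n).
\]
To get it, note that $k_i\geq k$, so iterating Proposition \ref{prop_hierarchy}(3) $k_i-k$ times turns $\mathsf{E}_S(\boxtimes\Box^k)$ into $\mathsf{E}_S(\boxtimes\Box^{k_i})$. Proposition \ref{prop_hierarchy}(1) then gives the displayed identity.

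For existence, I would apply the blocks of $\triangle$ to $\boxtimes\top$ from the inside out. The innermost block $\boxtimes\Box^{k_n}$ sends $\boxtimes\top$ to $\boxtimes\top$, and so does the next one, and so on. A trivial induction on the number of blocks therefore yields $\triangle\boxtimes\top=\boxtimes\top$, which establishes $\mathsf{E}_S(\triangle)$.

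For uniqueness, let $p$ be any fixed point of $\triangle v$. Write $q=\boxtimes\Box^{k_2}\cdots\boxtimes\Box^{k_n}p$ when $n\geq 2$, and $q=p$ when $n=1$, so that $p=\boxtimes\Box^{k_1}q$.
\begin{enumerate}
\item Since $p$ has the form $\boxtimes x$, Proposition \ref{propbasicproperty}(2) gives the lower bound $\boxtimes\top\leq p$.
\item The element $q$ also satisfies $\boxtimes\top\leq q$: when $n\geq 2$ it again begins with $\boxtimes$ and Proposition \ref{propbasicproperty}(2) applies, and when $n=1$ it is $p$ itself.
\item By (C1), $\Box^{k_1}$ is monotone, so $\Box^{k_1}\boxtimes\top\leq\Box^{k_1}q$. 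Since $\boxtimes$ is antitone, this gives $p=\boxtimes\Box^{k_1}q\leq\boxtimes\Box^{k_1}\boxtimes\top=\boxtimes\top$, using the identity for $i=1$.
\end{enumerate}
Antisymmetry then gives $p=\boxtimes\top$. Hence $\mathsf{U}_S(\triangle)$ holds and $\boxtimes\top$ is the unique fixed point.

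I do not expect a serious obstacle. The only point needing care is step 2, the lower bound for $q$, which is exactly where we use that the term begins with $\boxtimes$ and that each block is $\boxtimes\Box^{k_i}$. Note also that the minimality of $k$ is used only to propagate the fixed-point property up the hierarchy to every $k_i$.
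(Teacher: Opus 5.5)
Your proposal is correct and follows essentially the same route as the paper. You propagate $\mathsf{E}_S(\boxtimes\Box^k)$ up to each $k_i$ via Proposition \ref{prop_hierarchy}(3) and (1), which yields $\triangle\boxtimes\top=\boxtimes\top$. For uniqueness you use the same auxiliary element $q$ with $\boxtimes\top\leq q$, and the antitonicity of $\boxtimes\Box^{k_1}v$ then squeezes any fixed point $p$ to $\boxtimes\top$.
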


\begin{proof}
Suppose that $\mathsf{E}_S(\boxtimes \Box^k)$ holds. 
Since $k\leq k_i$ for every $i \in \{1, 2, \ldots, n\}$, by Proposition \ref{prop_hierarchy}(3), we have that $\mathsf{E}_S(\boxtimes\Box^{k_i})$ holds.  
By Proposition \ref{prop_hierarchy}(1), $\boxtimes \Box^{k_i} \boxtimes \top = \boxtimes \top$ for every $i\in\{1,\ldots,n\}$. 
Then, by applying these identities, we obtain
\[
    \boxtimes \Box^{k_1} \boxtimes \Box^{k_2} \cdots \boxtimes \Box^{k_n} \boxtimes \top = \boxtimes \top. 
\]
Thus $\boxtimes\top$ is a fixed point of $\triangle v$, and so $\mathsf{E}_S(\triangle)$ holds.

Next, we prove uniqueness.
Let $p$ be any fixed point of $\triangle v$, that is, $\triangle p = p$. 
By Proposition \ref{propbasicproperty}(2), 
\[
\boxtimes\top \leq \boxtimes\Box^{k_1} \boxtimes\Box^{k_2}\cdots \boxtimes\Box^{k_n}p = p. 
\]
Let
\[
    q : = \begin{cases} p & \text{if}\ n = 1, \\ \boxtimes\Box^{k_2}\cdots
\boxtimes\Box^{k_n}p & \text{if}\ n > 1. \end{cases}
\]
In either case, it follows from Proposition \ref{propbasicproperty}(2) that $\boxtimes\top\leq q$.
Since $\boxtimes\Box^{k_1} v$ is order-reversing, we obtain
\[
    p = \boxtimes\Box^{k_1}q \leq \boxtimes\Box^{k_1}\boxtimes\top = \boxtimes\top.
\]
Therefore $p=\boxtimes\top$.
Hence $\boxtimes\top$ is the unique fixed point of $\triangle v$,
and consequently $\mathsf{U}_S(\triangle)$ holds.
\end{proof}

\begin{cor}\label{cor_hierarchy_godel}
Let $S$ be an APS and let $\triangle v$ be a one-variable APS term with $d(\triangle) \geq 1$.
\[
    \mathsf{E}_S(\boxtimes) \Rightarrow \mathsf{E}_S(\triangle)\ \&\ \mathsf{U}_S(\triangle). 
\]
\end{cor}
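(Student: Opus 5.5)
The plan is to reduce the statement to Theorem \ref{thm_collapse} by rewriting $\triangle v$ with Proposition \ref{circulation}.

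Since $d(\triangle)\ge 1$, the term $\triangle v$ contains at least one $\boxtimes$, so it can be written as
\[
\triangle v=\Box^{k_0}\boxtimes\Box^{k_1}\boxtimes\Box^{k_2}\cdots\boxtimes\Box^{k_n}v
\]
with $n=d(\triangle)\ge1$ and natural numbers $k_0,\dots,k_n$. If $k_0=0$, the term already has the shape required in Theorem \ref{thm_collapse}. Otherwise, set $\triangle_0 v=\Box^{k_0}v$ and let $\triangle_1 v$ be the remainder $\boxtimes\Box^{k_1}\cdots\boxtimes\Box^{k_n}v$. Proposition \ref{circulation} then gives
\[
\mathsf{E}_S(\triangle)\iff\mathsf{E}_S(\triangle_1\triangle_0),
\qquad
\mathsf{U}_S(\triangle)\iff\mathsf{U}_S(\triangle_1\triangle_0).
\]
Here $\triangle_1\triangle_0 v=\boxtimes\Box^{k_1}\boxtimes\cdots\boxtimes\Box^{k_n+k_0}v$, which has the form $\boxtimes\Box^{k_1'}\cdots\boxtimes\Box^{k_n'}v$ with $k_i'=k_i$ for $i<n$ and $k_n'=k_n+k_0$.

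Next, let $k=\min\{k_1',\dots,k_n'\}\ge 0$. The hypothesis $\mathsf{E}_S(\boxtimes)$ is $\mathsf{E}_S(\boxtimes\Box^0)$. Applying Proposition \ref{prop_hierarchy}(3) $k$ times yields $\mathsf{E}_S(\boxtimes\Box^k)$.

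Theorem \ref{thm_collapse} then gives $\mathsf{E}_S$ and $\mathsf{U}_S$ for the rotated term $\triangle_1\triangle_0 v$, and transferring back along the equivalences from Proposition \ref{circulation} gives $\mathsf{E}_S(\triangle)$ and $\mathsf{U}_S(\triangle)$.

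There is no serious obstacle. The only point needing care is the bookkeeping in the normal form: the leading $\Box$-block must be moved to the end, and this is valid only because $n\ge1$ guarantees that a leading $\boxtimes$ remains after circulation.
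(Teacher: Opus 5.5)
Your proposal is correct and follows essentially the same route as the paper: use Proposition \ref{circulation} to move the leading $\Box^{k_0}$ to the end, obtain $\mathsf{E}_S(\boxtimes\Box^k)$ for $k=\min\{k_1,\dots,k_{n-1},k_n+k_0\}$ by iterating Proposition \ref{prop_hierarchy}(3), and then apply Theorem \ref{thm_collapse}. Your explicit bookkeeping of the rotated exponents $k_n'=k_n+k_0$ and of the transfer of $\mathsf{U}_S$ back along the circulation equivalence only spells out what the paper compresses into ``it suffices to consider a term of the form $\boxtimes\Box^{k_1}\cdots\boxtimes\Box^{k_n}v$.''
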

\begin{proof}
By Proposition \ref{circulation}, it suffices to consider a term of the form
$\boxtimes\Box^{k_1}\boxtimes\Box^{k_2}\cdots\boxtimes\Box^{k_n}v$.
Let $k=\min\{k_1,\ldots,k_n\}$.
If $\mathsf{E}_S(\boxtimes)$ holds, then Proposition \ref{prop_hierarchy}(3) implies that $\mathsf{E}_S(\boxtimes\Box^k)$ holds.
Hence Theorem \ref{thm_collapse} yields that $\mathsf{E}_S(\triangle)$ and $\mathsf{U}_S(\triangle)$ hold.
\end{proof}

The implication in Theorem \ref{thm_collapse} cannot be reversed.

\begin{prop}
    For every natural number $k$, let $\triangle v := \boxtimes \Box^{k+1} \boxtimes \Box^{k} v$. 
    Then, there exists an APS $S$ such that $\mathsf{E}_S(\triangle)$ and $\mathsf{U}_S(\triangle)$ hold, but $\mathsf{E}_S(\boxtimes \Box^{k})$ fails.
\end{prop}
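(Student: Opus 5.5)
The plan is to reuse the APS $S$ of Proposition~\ref{prop_strict} (Figure~\ref{fig4}) built for the same $k$. There we already know that $\mathsf{E}_S(\boxtimes\Box^k)$ fails, because $\boxtimes\Box^k\boxtimes\top = \boxtimes b_k = c \neq b_0 = \boxtimes\top$. So it only remains to show that $\triangle v = \boxtimes\Box^{k+1}\boxtimes\Box^k v$ has exactly one fixed point in $S$. Theorem~\ref{thm_collapse} cannot be applied here, since $\min\{k+1,k\}=k$ and the required hypothesis $\mathsf{E}_S(\boxtimes\Box^k)$ is exactly what fails. I will therefore compute $\triangle$ directly on every element of the finite structure.

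The key step is to show that $\triangle$ is the constant map with value $b_0$; this gives existence and uniqueness at once. First, since $\Box$ only moves elements upward along the chain or to $\top$ (and fixes $\bot$), for every $x$ the value $\Box^k x$ lies in
\[
\{\bot,\top,c\}\cup\{b_j : 0 \le j \le k+1\}.
\]
Next I apply $\boxtimes$ to each possibility:
\begin{itemize}
\item if $\Box^k x = \bot$, then $\boxtimes\Box^k x = \top$;
\item if $\Box^k x$ is one of $\top$, $c$, $b_{k+1}$, then $\boxtimes\Box^k x = b_0$;
\item if $\Box^k x = b_j$ with $j \le k$, then $\boxtimes\Box^k x = c$.
\end{itemize}
So $\boxtimes\Box^k x \in \{\top, b_0, c\}$. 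Finally I apply $\boxtimes\Box^{k+1}$ to these three values:
\begin{itemize}
\item $\Box^{k+1}\top = \top$, so $\boxtimes\Box^{k+1}\top = \boxtimes\top = b_0$;
\item $\Box^{k+1} b_0 = b_{k+1}$, so $\boxtimes\Box^{k+1} b_0 = \boxtimes b_{k+1} = b_0$;
\item $\Box^{k+1} c = \top$ (as $\Box c = \top$ and $k+1 \ge 1$), so $\boxtimes\Box^{k+1} c = b_0$.
\end{itemize}
Hence $\triangle x = b_0$ for all $x\in L$.

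It follows that $b_0=\boxtimes\top$ is a fixed point of $\triangle v$, so $\mathsf{E}_S(\triangle)$ holds. Any fixed point $p$ satisfies $p=\triangle p=b_0$, so $\mathsf{U}_S(\triangle)$ holds as well.

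The main obstacle is only bookkeeping. The edge case $k=0$ needs care, because there $\Box^k$ is the identity and $b_{k+1}=b_1$, so $b_1$ is the top of the chain. The case analysis above is written to cover this uniformly, but I will double-check it against the operation table. The APS axioms for this structure were already verified in Proposition~\ref{prop_strict}, so nothing new has to be checked on that side.
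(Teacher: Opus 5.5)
Your proposal is correct and follows the paper's proof. Like the paper, you reuse the APS of Figure~\ref{fig4}, show that $\triangle$ is the constant map with value $\boxtimes\top=b_0$ (which gives both existence and uniqueness), and take the failure of $\mathsf{E}_S(\boxtimes\Box^k)$ from Proposition~\ref{prop_strict}; your case analysis simply writes out the check the paper calls ``easy''.
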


\begin{proof}
    Fix a natural number $k$. 
    Let $S$ be the APS in Figure~\ref{fig4} for $k$.  
    It is easy to check that $\triangle x = \boxtimes \top$ for every $x \in L$, and hence $\boxtimes \top$ is the unique fixed point of $\triangle v$. 
    In particular, $\mathsf{E}_S(\triangle)$ and $\mathsf{U}_S(\triangle)$ hold.
    On the other hand, $S$ separates $\mathsf{E}_S(\boxtimes \Box^{k+1})$ from $\mathsf{E}_S(\boxtimes \Box^k)$ by Proposition~\ref{prop_strict},
    and hence $\mathsf{E}_S(\boxtimes \Box^k)$ fails.
\end{proof}

\section{Fixed points and G2-like phenomena}

Abstract provability structures were introduced in order to capture abstract forms of G\"odel's second incompleteness theorem (G2).
In this section, we show that APSs can also be used to analyze a wider range of G2-like phenomena associated with general fixed-point properties.

We first consider the fixed-point hierarchy studied in the previous section and show that its levels yield corresponding non-refutability results.
The following proposition is a generalization of Theorem \ref{BS1}(1) of Beklemishev and Shamkanov. 

\begin{prop}\label{prop_g2_hierarchy}
Let $S$ be a consistent APS and let $k \geq 0$.
Suppose that $\mathsf{E}_S(\boxtimes\Box^k)$ holds.
Then $\Box^k\boxtimes\top$ is not refutable in $S$.
\end{prop}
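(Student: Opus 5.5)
We argue by contradiction: suppose $\Box^k\boxtimes\top \leq \bot$ and derive $\top\leq\bot$. The plan is to use antitonicity of $\boxtimes$ together with the fixed-point identity from Proposition \ref{prop_hierarchy}(1), namely $\boxtimes\Box^k\boxtimes\top=\boxtimes\top$, and then conclude with (C3).

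First, apply (C1) to $\Box^k\boxtimes\top\leq\bot$. This gives $\boxtimes\bot\leq\boxtimes\Box^k\boxtimes\top$. Since $\boxtimes\bot=\top$ by Proposition \ref{propbasicproperty}(1), and $\boxtimes\Box^k\boxtimes\top=\boxtimes\top$ by Proposition \ref{prop_hierarchy}(1), we obtain $\top\leq\boxtimes\top$. So $\boxtimes\top$ is provable.

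Next, we want to show that $\top\leq\Box\top$ and $\top\leq\boxtimes\top$ together give $\top\leq\boxtimes\top$ via (C3), but that step is circular, so we need a different route. The natural one is to show $\top\leq\bot$ directly. From $\top\leq\boxtimes\top$ and (C5), we get $\boxtimes\top=\top$. Then Proposition \ref{propbasicproperty}(2) gives $\Box^k\boxtimes\top=\Box^k\top=\top$, using $\Box\top=\top$. Combined with the assumption $\Box^k\boxtimes\top\leq\bot$, this yields $\top\leq\bot$, contradicting consistency.

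The main point requiring care is the step from $\top\leq\boxtimes\top$ to $\Box^k\boxtimes\top=\top$. It follows from (C5) giving $\boxtimes\top=\top$, and then Proposition \ref{propbasicproperty}(1) applied $k$ times. Alternatively, Proposition \ref{propbasicproperty}(2) gives $\boxtimes\top\leq\Box^k\boxtimes\top$, which avoids rewriting $\boxtimes\top$ at all: $\top\leq\boxtimes\top\leq\Box^k\boxtimes\top\leq\bot$. I would present this chain as the final argument, since it is the cleanest.
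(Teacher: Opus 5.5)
Your proof is correct and is essentially the paper's argument: assume $\Box^k\boxtimes\top\leq\bot$, apply (C1) and Proposition \ref{prop_hierarchy}(1) to get $\top\leq\boxtimes\bot\leq\boxtimes\top$, and then close with the chain $\top\leq\boxtimes\top\leq\Box^k\boxtimes\top\leq\bot$ from Proposition \ref{propbasicproperty}(2). Your final chain matches the paper's proof exactly. The aside about (C3) is unnecessary and can be dropped.
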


\begin{proof}
Suppose, toward a contradiction, that $\Box^k\boxtimes\top\leq\bot$.
By (C1), $\boxtimes\bot\leq\boxtimes\Box^k\boxtimes\top$.
Since $\mathsf{E}_S(\boxtimes\Box^k)$ holds, Proposition \ref{prop_hierarchy}(1) gives
$\boxtimes\Box^k\boxtimes\top=\boxtimes\top$. 
Moreover, by (C2) and Proposition \ref{propbasicproperty}(2),
\[
    \top\leq\boxtimes\bot \leq\boxtimes\top \leq\Box^k\boxtimes\top \leq\bot.
\]
Thus $S$ is inconsistent, a contradiction.
\end{proof}

This proposition can be extended to arbitrary one-variable APS terms of positive degree.

\begin{prop}\label{prop_ge_general}
Let $S$ be a consistent APS and let $\triangle v$ be a one-variable APS term of the form
\[
    \triangle v = \Box^{k_0}\boxtimes\Box^{k_1}\boxtimes\cdots \boxtimes\Box^{k_n}v,
\]
where $n\geq 1$.
Let $k=\min\{k_0,k_1,\ldots,k_{n-1}\}$.
Suppose that $\mathsf{E}_S(\boxtimes\Box^k)$ holds.
Then $\triangle\top$ is not refutable in $S$.
\end{prop}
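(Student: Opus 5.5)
The plan is to compute $\triangle\top$ explicitly under the hypothesis. It should collapse to $\Box^{k_0}\boxtimes\top$, and then Proposition \ref{prop_g2_hierarchy} applies at level $k_0$.

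\textbf{Step 1: remove the innermost boxes.} By Proposition \ref{propbasicproperty}(1), $\Box\top=\top$, so $\Box^{k_n}\top=\top$. Hence
\[
\triangle\top=\Box^{k_0}\boxtimes\Box^{k_1}\boxtimes\cdots\boxtimes\Box^{k_{n-1}}\boxtimes\top .
\]
This explains why $k_n$ does not appear in the definition of $k$.

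\textbf{Step 2: lift the hypothesis.} Since $k\le k_i$ for every $i\in\{0,\dots,n-1\}$, repeated use of Proposition \ref{prop_hierarchy}(3) turns $\mathsf{E}_S(\boxtimes\Box^k)$ into $\mathsf{E}_S(\boxtimes\Box^{k_i})$ for each such $i$. By Proposition \ref{prop_hierarchy}(1), this gives $\boxtimes\Box^{k_i}\boxtimes\top=\boxtimes\top$ for each such $i$.

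\textbf{Step 3: collapse the term.} Apply these identities from the inside out, starting with $\boxtimes\Box^{k_{n-1}}\boxtimes\top=\boxtimes\top$ and working down to $i=1$. This reduces the expression to $\triangle\top=\Box^{k_0}\boxtimes\top$. When $n=1$ there is nothing to collapse: $\triangle\top=\Box^{k_0}\boxtimes\top$ directly, and $k=k_0$.

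\textbf{Step 4: conclude.} Since $\mathsf{E}_S(\boxtimes\Box^{k_0})$ holds by Step 2 and $S$ is consistent, Proposition \ref{prop_g2_hierarchy} says that $\Box^{k_0}\boxtimes\top$ is not refutable. Hence $\triangle\top$ is not refutable in $S$.

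No step looks hard. The only point needing care is the index bookkeeping. The exponent $k_0$ is included in the minimum because the final application of Proposition \ref{prop_g2_hierarchy} is at level $k_0$. The exponent $k_n$ is excluded because it disappears at $\top$.
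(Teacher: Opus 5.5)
Your proposal is correct and follows the paper's proof essentially step for step. You lift $\mathsf{E}_S(\boxtimes\Box^k)$ to each level $k_i$ ($i<n$) via Proposition \ref{prop_hierarchy}(3) and (1), drop $\Box^{k_n}$ using $\Box\top=\top$, collapse $\triangle\top$ to $\Box^{k_0}\boxtimes\top$, and finish with Proposition \ref{prop_g2_hierarchy}; your account of why $k_0$ is in the minimum and $k_n$ is not is also right.
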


\begin{proof}
Suppose that $\mathsf{E}_S(\boxtimes\Box^k)$ holds.
Since $k\leq k_i$ for every $i<n$, Proposition \ref{prop_hierarchy}(3) yields
$\mathsf{E}_S(\boxtimes\Box^{k_i})$.
Hence, by Proposition \ref{prop_hierarchy}(1),
$\boxtimes\Box^{k_i}\boxtimes\top=\boxtimes\top$.

Since $\Box^{k_n}\top=\top$ by Proposition \ref{propbasicproperty}(1), applications of these equalities give $\triangle\top=\Box^{k_0}\boxtimes\top$.
By Proposition \ref{prop_g2_hierarchy},
$\Box^{k_0}\boxtimes\top$ is not refutable in $S$.
Therefore, $\triangle\top$ is not refutable in $S$.
\end{proof}

Next, we investigate whether the fixed-point assumption in Proposition \ref{prop_ge_general} can be omitted.
We begin with some general consequences of the refutability of values of APS terms.

\begin{lem}\label{lem_refutable}
Let $S$ be an APS, let $\triangle v$ be a one-variable APS term with $d(\triangle) \geq 1$, and let $x\in L$.
If $\triangle x$ is refutable in $S$, then $\boxtimes\top$ is refutable in $S$.
\end{lem}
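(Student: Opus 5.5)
Since $d(\triangle)\geq 1$, the term $\triangle v$ contains at least one $\boxtimes$. I will use the leftmost one and write
\[
\triangle v=\Box^{k}\boxtimes\triangle_1 v
\]
for some $k\geq 0$ and some one-variable APS term $\triangle_1 v$, which may have degree zero. The plan is to show that $\boxtimes\top$ lies below $\triangle x$ and then use transitivity with the hypothesis $\triangle x\leq\bot$.

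The key step is to apply Proposition \ref{propbasicproperty}(2) to the element $y:=\triangle_1 x$. This gives
\[
\boxtimes\top\leq\boxtimes y\leq\Box^{k}\boxtimes y=\triangle x .
\]
If $\triangle x$ is refutable, that is, $\triangle x\leq\bot$, transitivity yields $\boxtimes\top\leq\bot$. Hence $\boxtimes\top$ is refutable in $S$.

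I do not expect a genuine obstacle. The only point needing care is the decomposition of $\triangle$: the prefix before the first $\boxtimes$ must consist only of boxes, which holds by choosing the leftmost occurrence of $\boxtimes$. The inequality $\boxtimes y\leq\Box^k\boxtimes y$ is exactly the chain in Proposition \ref{propbasicproperty}(2). That proposition relies on (C5), which is in force by the standing convention.
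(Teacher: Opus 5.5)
Your proof is correct and follows the paper's argument: you decompose $\triangle v=\Box^k\boxtimes\triangle_1 v$ at the leftmost $\boxtimes$, apply Proposition \ref{propbasicproperty}(2) to get $\boxtimes\top\leq\triangle x$, and conclude by transitivity with $\triangle x\leq\bot$. Your remark that this relies on the standing assumption (C5) is also accurate.
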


\begin{proof}
Since $d(\triangle) \geq 1$, it is of the form $\Box^k\boxtimes\triangle_0 v$ for some $k\geq 0$ and some one-variable APS term $\triangle_0 v$.
Suppose that $\triangle x\leq\bot$.
By Proposition \ref{propbasicproperty}(2), we have
$\boxtimes\top\leq \Box^k\boxtimes\triangle_0x=\triangle x\leq\bot$.
Thus $\boxtimes\top$ is refutable in $S$.
\end{proof}

In view of Lemma \ref{lem_refutable}, in order to study the non-refutability of values of one-variable APS terms of positive degree, it suffices to analyze the case where $\boxtimes\top\leq\bot$.
Under this assumption, the behavior of one-variable APS terms becomes much more restricted.
It turns out that the parity of the degree again plays an essential role. 
In fact, the value of a one-variable APS term at $\boxtimes\top$ is completely determined by the parity of its degree.

\begin{lem}\label{lem_parity}
Let $S$ be an APS and suppose that $\boxtimes\top\leq\bot$.
Let $\triangle v$ be a one-variable APS term.
\begin{enumerate}
    \item If $d(\triangle)$ is even, then $\triangle\boxtimes\top=\boxtimes\top$.

    \item If $d(\triangle)$ is odd, then $\triangle\boxtimes\top=\top$.
\end{enumerate}
\end{lem}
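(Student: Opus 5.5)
We argue by induction on the length of the term $\triangle v$, proving (1) and (2) simultaneously. The key point is that the assumption $\boxtimes\top\leq\bot$ collapses the relevant part of the structure to the two values $\boxtimes\top$ and $\top$. These two values are swapped by $\boxtimes$ and fixed by $\Box$.

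We first establish the four basic identities:
\[
\Box\top=\top,\quad \boxtimes\top=\boxtimes\top,\quad \Box\boxtimes\top=\boxtimes\top,\quad \boxtimes\boxtimes\top=\top.
\]
The identity $\Box\top=\top$ is Proposition \ref{propbasicproperty}(1).

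For $\boxtimes\boxtimes\top=\top$, we use $\boxtimes\top\leq\bot$. By (C1) this gives $\boxtimes\bot\leq\boxtimes\boxtimes\top$, and $\boxtimes\bot=\top$ by Proposition \ref{propbasicproperty}(1). Hence $\top\leq\boxtimes\boxtimes\top$, and (C5) gives equality.

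For $\Box\boxtimes\top=\boxtimes\top$, Proposition \ref{propbasicproperty}(2) already gives $\boxtimes\top\leq\Box\boxtimes\top$. The reverse inequality is the one real step. I would apply (C3) with $x=\Box\boxtimes\top$ and $y=\boxtimes\top$. Trivially $x\leq\Box y$. For the other hypothesis we need $x\leq\boxtimes y=\boxtimes\boxtimes\top$, and this holds because $\boxtimes\boxtimes\top=\top$ and (C5). Then (C3) yields $\Box\boxtimes\top\leq\boxtimes\top$, as required.

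With these identities the induction is routine. Write $\triangle v$ as either $v$, $\Box\triangle_0 v$, or $\boxtimes\triangle_0 v$, where $\triangle_0$ is shorter.
\begin{itemize}
\item In the base case $v$, the degree is $0$ and the value at $\boxtimes\top$ is $\boxtimes\top$.
\item Prefixing $\Box$ preserves both the degree and the value, since $\Box$ fixes both $\top$ and $\boxtimes\top$.
\item Prefixing $\boxtimes$ flips the parity of the degree and swaps the value between $\boxtimes\top$ and $\top$, by the identities $\boxtimes(\boxtimes\top)=\top$ and $\boxtimes(\top)=\boxtimes\top$.
\end{itemize}
Thus the value $\triangle\boxtimes\top$ is $\boxtimes\top$ when $d(\triangle)$ is even and $\top$ when $d(\triangle)$ is odd.

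The main obstacle is the identity $\Box\boxtimes\top=\boxtimes\top$. It is the only place where (C3) is used, and it needs the prior computation $\boxtimes\boxtimes\top=\top$ to verify the hypothesis $x\leq\boxtimes y$. So the order of the steps matters: establish $\boxtimes\boxtimes\top=\top$ first, then $\Box\boxtimes\top=\boxtimes\top$, then run the induction.
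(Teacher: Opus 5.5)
Your proposal is correct and matches the paper's proof essentially step for step. You first derive $\boxtimes^2\top=\top$ from $\boxtimes\top\leq\bot$ via (C1) and (C5), then obtain $\Box\boxtimes\top=\boxtimes\top$ by applying (C3) with $x=\Box\boxtimes\top$ and $y=\boxtimes\top$, and finally run a simultaneous induction in which $\Box$ fixes and $\boxtimes$ swaps the two values $\boxtimes\top$ and $\top$. The only difference is cosmetic: you cite Proposition~\ref{propbasicproperty}(2) for $\boxtimes\top\leq\Box\boxtimes\top$, where the paper cites (C4) directly.
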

\begin{proof}
We first show that $\boxtimes^2\top=\top$ and $\Box\boxtimes\top=\boxtimes\top$.
Since $\boxtimes\top\leq\bot$, (C1) and Proposition \ref{propbasicproperty}(1) give
$\top=\boxtimes\bot\leq\boxtimes^2\top$.
Since $\boxtimes^2\top \leq \top$ by (C5), we get $\boxtimes^2\top=\top$.

By (C4), $\boxtimes\top\leq\Box\boxtimes\top$.
On the other hand, $\Box\boxtimes\top\leq\Box\boxtimes\top$ and $\Box\boxtimes\top\leq\top=\boxtimes^2\top$.
Thus, applying (C3), we obtain $\Box\boxtimes\top\leq\boxtimes\top$.
Therefore $\Box\boxtimes\top=\boxtimes\top$.

We prove the lemma by induction on the construction of $\triangle v$.
For $\triangle v=v$, the statement is immediate.

Suppose that $d(\triangle)$ is even and $\triangle\boxtimes\top=\boxtimes\top$.
Then $d(\Box \triangle)$ is even and $\Box\triangle\boxtimes\top=\Box\boxtimes\top=\boxtimes\top$. 
Also, $d(\boxtimes \triangle)$ is odd and $\boxtimes\triangle\boxtimes\top=\boxtimes^2\top=\top$. 

Suppose that $d(\triangle)$ is odd and $\triangle\boxtimes\top=\top$.
Then $d(\Box \triangle)$ is odd and $\Box\triangle\boxtimes\top=\Box\top=\top$. 
Also, $d(\boxtimes \triangle)$ is even and $\boxtimes\triangle\boxtimes\top=\boxtimes\top$.
\end{proof}

The preceding two lemmas allow us to determine the value of a term of positive degree whenever it is refutable.

\begin{lem}\label{lem_refutable_value}
Let $S$ be an APS, let $\triangle v$ be a one-variable APS term with $d(\triangle) \geq 1$ and let $x\in L$.
If $\triangle x$ is refutable, then $\triangle x=\boxtimes\top$.
\end{lem}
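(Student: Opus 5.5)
We show the two inequalities $\boxtimes\top\leq\triangle x$ and $\triangle x\leq\boxtimes\top$ separately and then use antisymmetry of $\leq$.

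\emph{Lower bound.} Since $d(\triangle)\geq 1$, we write $\triangle v=\Box^k\boxtimes\triangle_0 v$ as in the proof of Lemma \ref{lem_refutable}. Proposition \ref{propbasicproperty}(2) then gives $\boxtimes\top\leq\Box^k\boxtimes\triangle_0 x=\triangle x$. This inequality holds for every $x$ and does not need the refutability hypothesis.

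\emph{Upper bound.} Assume $\triangle x\leq\bot$. By Lemma \ref{lem_refutable}, $\boxtimes\top\leq\bot$, so Lemma \ref{lem_parity} is available. We now want to show $\bot\leq\boxtimes\top$, i.e.\ that $\bot$ and $\boxtimes\top$ coincide. For this we use the identities established at the start of the proof of Lemma \ref{lem_parity}: $\boxtimes^2\top=\top$ and $\Box\boxtimes\top=\boxtimes\top$.
\begin{itemize}
\item Since $\boxtimes\top\leq\bot$, (C1) gives $\Box\bot\geq\Box\boxtimes\top=\boxtimes\top$, which is not yet the inequality we need.
\item Instead, we apply (C3) with $x:=\bot$ and $y:=\boxtimes\top$.
\item For the first hypothesis of (C3), $\bot\leq\Box\boxtimes\top$, we use $\Box\boxtimes\top=\boxtimes\top$, so it amounts to $\bot\leq\boxtimes\top$ itself. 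This is circular, so this choice of $y$ does not work.
\end{itemize}
We therefore try $y:=\bot$ instead. The hypotheses of (C3) become $\bot\leq\Box\bot$ and $\bot\leq\boxtimes\bot=\top$. The second holds by (C5).

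\emph{Main obstacle.} The difficulty is the first hypothesis, $\bot\leq\Box\bot$. From $\boxtimes\top\leq\bot$ and (C1) we get $\top=\boxtimes\bot\leq\boxtimes\boxtimes\top$. This does not obviously yield $\bot\leq\Box\bot$.

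A more robust route is to avoid $\bot$ and bound $\triangle x$ directly by induction on the term. We show that if $d(\triangle)\geq 1$ and $\triangle x\leq\bot$, then $\triangle x\leq\boxtimes\top$, using $\triangle x\leq\Box\triangle'x$ and $\triangle x\leq\boxtimes z$ for a suitable $z$, and then apply (C3). Concretely, let $u:=\triangle x$. By Proposition \ref{propbasicproperty}(2), $u\leq\Box u$ whenever $\triangle$ begins with $\boxtimes$. Moreover, $u\leq\bot\leq$ anything? Not available, since $\bot$ need not be least.

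So the plan is as follows. First reduce, by using Lemma \ref{lem_parity} and monotonicity or antitonicity from Proposition \ref{prop_parity}, to comparing $\triangle x$ with $\triangle\boxtimes\top$. Then split into cases on the parity of $d(\triangle)$.
\begin{itemize}
\item If $d(\triangle)$ is odd: from $\boxtimes\top\leq x$ we would get $\triangle x\leq\triangle\boxtimes\top=\top$, which is useless. So instead we use $u\leq\bot$, hence $\boxtimes u\geq\top$, and then try (C3) with $y=u$: we need $u\leq\Box u$ and $u\leq\boxtimes u$. The first follows from (C4)-type reasoning when $\triangle$ starts with $\Box^k\boxtimes$; the second holds because $u\leq\top=\boxtimes\bot\leq\boxtimes u$ by (C5) and (C1) applied to $u\leq\bot$.
\item In general, (C3) with $y=u$ gives $u\leq\boxtimes\top$ directly, provided $u\leq\Box u$. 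For $u=\Box^k\boxtimes w$ this holds by Proposition \ref{propbasicproperty}(2), since $\Box^k\boxtimes w\leq\Box^{k+1}\boxtimes w$.
\end{itemize}

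Combining the two inequalities gives $\triangle x=\boxtimes\top$. In the final argument, only the lower bound and the (C3) argument with $y=u$ in the last item are needed; the case split on parity and the intermediate appeal to Lemma \ref{lem_parity} can be dropped.
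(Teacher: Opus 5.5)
Your final argument is correct. The lower bound $\boxtimes\top\le\triangle x$ follows from Proposition \ref{propbasicproperty}(2). For the upper bound, put $u=\triangle x=\Box^k\boxtimes\triangle_0 x$. Then $u\le\Box u$ by Proposition \ref{propbasicproperty}(2), and $u\le\top=\boxtimes\bot\le\boxtimes u$ by (C5), (C1) and $u\le\bot$. So (C3) with both arguments equal to $u$ gives $u\le\boxtimes\top$.

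The paper uses the same (C3) device but applies it to a different element. It works with the inner element $y=\boxtimes\triangle_0 x$, where $y\le\Box y$ is just (C4) and $y\le\Box^k y=\triangle x\le\bot$. This yields only $y=\boxtimes\top$, hence $\triangle x=\Box^k\boxtimes\top$. To remove the $\Box^k$ prefix, the paper then needs Lemma \ref{lem_refutable} to get $\boxtimes\top\le\bot$, and Lemma \ref{lem_parity}(1) (whose proof contains a second use of (C3)) to get $\Box^k\boxtimes\top=\boxtimes\top$. You instead notice that the whole element $\Box^k\boxtimes\triangle_0 x$ already satisfies $u\le\Box u$. That makes the prefix harmless from the start, so your proof is shorter and does not depend on Lemmas \ref{lem_refutable} and \ref{lem_parity}.

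When you write it up, drop the abandoned attempts: the choices $y:=\boxtimes\top$ and $y:=\bot$, the parity case split, and the remark about $\boxtimes\top\le x$, which is not a hypothesis. Also, your justification of $u\le\boxtimes u$, placed under the ``odd'' bullet, does not use parity. It belongs in the single general argument.
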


\begin{proof}
Since $d(\triangle)\geq1$, we write $\triangle v=\Box^k\boxtimes\triangle_0v$ for some $k\geq0$ and some one-variable APS term $\triangle_0v$, and let $y=\boxtimes\triangle_0 x$.
Suppose that $\triangle x\leq\bot$.
By Proposition \ref{propbasicproperty}(2),
$y\leq\Box^k y=\triangle x\leq\bot$.
Hence, by (C1) and Proposition \ref{propbasicproperty}(1),
$\top=\boxtimes\bot\leq\boxtimes y$.
Since $y\leq\top$ by (C5), we have $y\leq\boxtimes y$.
Also, by (C4), $y\leq\Box y$.
Therefore, by (C3), $y\leq\boxtimes\top$.

By Proposition \ref{propbasicproperty}(2), $\boxtimes\top\leq y$.
Thus $y=\boxtimes\top$, and hence $\triangle x=\Box^k\boxtimes\top$.

By Lemma \ref{lem_refutable}, $\boxtimes\top\leq\bot$.
Since $\Box^k v$ has even degree, it follows from Lemma \ref{lem_parity}(1) that $\Box^k\boxtimes\top=\boxtimes\top$. 
Therefore $\triangle x=\boxtimes\top$.
\end{proof}

For terms of even degree, a G2-like non-refutability result holds without any fixed-point assumption. 

\begin{thm}\label{thm_even_nonrefutable}
Let $S$ be a consistent APS and let $\triangle v$ be a one-variable APS term of even degree.
Then $\triangle\top$ is not refutable in $S$.
\end{thm}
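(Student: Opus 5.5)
We argue by contradiction. Suppose $\triangle\top\leq\bot$, where $S$ is consistent and $d(\triangle)$ is even. If $d(\triangle)=0$, then $\triangle v=\Box^k v$, and Proposition \ref{propbasicproperty}(1) gives $\triangle\top=\top$. Refutability then means $\top\leq\bot$, contradicting consistency.

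So assume $d(\triangle)\geq 2$. By Lemma \ref{lem_refutable}, applied with $x=\top$, we obtain $\boxtimes\top\leq\bot$. This puts us in the setting of Lemma \ref{lem_parity}. We now compare $\triangle\top$ with $\triangle\boxtimes\top$, which we can compute.

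By (C5), $\boxtimes\top\leq\top$. Since $d(\triangle)$ is even, $\triangle$ is order-preserving by Proposition \ref{prop_parity}(1). Hence
\[
\triangle\boxtimes\top\leq\triangle\top .
\]
This inequality runs the wrong way to be useful directly. Instead we use $\top\leq\boxtimes^2\top$, shown in the proof of Lemma \ref{lem_parity}; in fact $\boxtimes^2\top=\top$ there. It follows that
\[
\triangle\top=\triangle\boxtimes\boxtimes\top .
\]
The term $\triangle\boxtimes v$ has odd degree, so Lemma \ref{lem_parity}(2), applied to this term at the argument $\boxtimes\top$, gives $\triangle\boxtimes(\boxtimes\top)=\top$. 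Therefore $\triangle\top=\top$.

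Combining this with the assumption $\triangle\top\leq\bot$ gives $\top\leq\bot$, contradicting the consistency of $S$.

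The only delicate point is to express $\triangle\top$ as a value at $\boxtimes\top$, namely via $\top=\boxtimes^2\top$. Once that is done, Lemma \ref{lem_parity} applies directly. An alternative route is Lemma \ref{lem_refutable_value}, which would give $\triangle\top=\boxtimes\top$. We then need the equation $\triangle\top=\top$, which yields $\top=\boxtimes\top\leq\bot$ and the same contradiction. The computation above already supplies that equation, so this alternative is not needed.
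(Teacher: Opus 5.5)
Your proof is correct and follows essentially the paper's route: Lemma \ref{lem_refutable} gives $\boxtimes\top\leq\bot$, and then Lemma \ref{lem_parity}(2) forces $\triangle\top=\top$, contradicting consistency. The only cosmetic difference is how you rewrite $\triangle\top$. The paper peels off the innermost $\boxtimes\Box^k$ using $\Box^k\top=\top$ and applies Lemma \ref{lem_parity}(2) to the odd-degree prefix, whereas you use $\top=\boxtimes^2\top$ and apply it to $\triangle\boxtimes v$; that identity is itself Lemma \ref{lem_parity}(2) for the term $\boxtimes v$, so you need not cite the lemma's proof.
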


\begin{proof}
If $d(\triangle)=0$, then $\triangle v=\Box^k v$ for some $k\geq 0$.
Since $\Box^k\top=\top$ by Proposition \ref{propbasicproperty}(1), the refutability of $\triangle\top$ would imply $\top\leq\bot$, contradicting the consistency of $S$.

Suppose that $d(\triangle)\geq 2$ and, toward a contradiction, that $\triangle\top$ is refutable. 
By Lemma \ref{lem_refutable}, $\boxtimes\top\leq\bot$.
Assume that $\triangle v$ is of the form $\triangle_0\boxtimes\Box^k v$, where
$d(\triangle_0)$ is odd.
Since $\Box^k\top=\top$, we have $\triangle\top=\triangle_0\boxtimes\top$.
By Lemma \ref{lem_parity}(2), $\triangle_0\boxtimes\top=\top$.
Hence $\triangle\top=\top$.
Since $\triangle\top$ is refutable, it follows that $\top\leq\bot$, a contradiction. 
\end{proof}

The situation is different for terms of odd degree.
In particular, even the existence of fixed points for all terms of degree three does not yield a G2-like non-refutability result for terms of odd degree evaluated at $\top$.

\begin{prop}\label{prop_no_g2}
There exists a consistent APS $S$ satisfying the following conditions:
\begin{enumerate}
    \item $\mathsf{E}_S(\triangle)$ holds for every one-variable APS term $\triangle v$ of degree three.
    \item For every one-variable APS term $\triangle v$ of odd degree, $\triangle\top$ is refutable in $S$.
\end{enumerate}
\end{prop}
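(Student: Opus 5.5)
The plan is to separate the two conditions. Condition (2) will follow from one structural property, namely $\boxtimes\top\leq\bot$. Condition (1) then comes down to designing a small finite APS.

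\emph{Reducing (2) to $\boxtimes\top\leq\bot$.} Suppose $S$ is an APS with $\boxtimes\top\leq\bot$, and let $\triangle v$ have odd degree. Write $\triangle v=\triangle_0\boxtimes\Box^k v$, where $d(\triangle_0)$ is even. Since $\Box^k\top=\top$ by Proposition \ref{propbasicproperty}(1), we get $\triangle\top=\triangle_0\boxtimes\top$. By Lemma \ref{lem_parity}(1) this equals $\boxtimes\top$, which is $\leq\bot$. So every odd term at $\top$ is refutable.

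This also constrains the construction. By Proposition \ref{prop_g2_hierarchy}, the model must fail $\mathsf{E}_S(\boxtimes\Box^k)$ for every $k$. By Proposition \ref{prop_hierarchy}(1), this means $\boxtimes\Box^k\boxtimes\top\neq\boxtimes\top$ for all $k$. This is automatic if $\bot$ is least, $\boxtimes\top=\bot$, $\Box\bot=\bot$ and $\boxtimes\bot=\top$, since then $\boxtimes\Box^k\boxtimes\top=\top$. In particular $\Box$ cannot be the identity on all of $L$. Otherwise every degree-3 term would equal $\boxtimes$, and a Gödelian fixed point would contradict Theorem \ref{BS1}(1).

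\emph{Designing the model.} I would take a finite bounded poset with $\bot$ least and $\top$ greatest, keep $\boxtimes\top=\bot$, $\boxtimes\bot=\top$, $\Box\top=\top$, $\Box\bot=\bot$, and add a few middle elements. On these, $\boxtimes$ and $\Box$ are chosen so that three boxtimes-steps return to the starting point while a single $\boxtimes$ does not. Because $\boxtimes\top=\bot$, condition (C3) says that for each $y$ the elements $\Box y$ and $\boxtimes y$ have no common lower bound other than $\bot$. Condition (C4) says $\boxtimes x\leq\Box\boxtimes x$.

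The first candidate I would try uses three pairwise incomparable middle atoms $a,b,c$ cycled by $\boxtimes$ ($a\mapsto b\mapsto c\mapsto a$), with $\Box$ the identity on them. Then $\boxtimes^3$ fixes each atom, (C4) is trivial, and (C3) holds because distinct atoms meet only in $\bot$. Antitonicity of $\boxtimes$ holds because the atoms are incomparable with each other, lie above $\bot$ and below $\top$, and $\boxtimes$ sends $\top$ and $\bot$ to the extremes.

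This candidate violates the constraint from the first paragraph, since $\Box$ is the identity on all of $L$. I would still keep it as a template, because the cycle handles the pure $\boxtimes^3$ part. The remaining work is to add a nontrivial $\Box$, for instance an element $d$ with $\Box d\neq d$, so that each degree-3 term still lands back on some cycle point.

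\emph{Verifying (1).} By Proposition \ref{circulation}, it suffices to treat terms $\boxtimes\Box^{a}\boxtimes\Box^{b}\boxtimes\Box^{c}v$. In a finite model, $\Box^k$ stabilises for $k$ large, so only finitely many patterns $(a,b,c)$ need checking. For each I would exhibit a fixed point, ideally one of the cycle points or an element of the form $\boxtimes\Box^m\boxtimes\top$, by analogy with Theorem \ref{2boxtimesfp}. Consistency requires $\top\not\leq\bot$, which is immediate.

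\emph{Main obstacle.} The hardest step is the construction itself: reconciling a genuinely non-identity $\Box$ with (C3) and (C4) while keeping a fixed point for every degree-3 term. My first attempt is the 3-cycle of atoms above, extended by one extra element moved by $\Box$, with the checks done by a finite table over $(a,b,c)$.
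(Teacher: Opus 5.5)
Your reduction of (2) to $\boxtimes\top\leq\bot$ via Lemma \ref{lem_parity}(1) is correct. Your first candidate is exactly the model the paper uses (Figure~\ref{fig5}): three incomparable atoms cycled by $\boxtimes$, with $\boxtimes$ swapping $\top$ and $\bot$ and $\Box$ the identity. The gap is that you discard this candidate for a wrong reason and never supply a replacement, so no model is actually exhibited.

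Your claim that, with $\Box$ the identity, every degree-3 term ``would equal $\boxtimes$'' is false. Such a term equals $\boxtimes^3$, and in your candidate $\boxtimes^3\neq\boxtimes$ on the atoms: $\boxtimes a=b$ but $\boxtimes^3 a=a$. You seem to have conflated an identity of operations, $\boxtimes^3=\boxtimes$, with the pointwise fact $\boxtimes^3\top=\boxtimes\top$ from Theorem \ref{2boxtimesfp}(1).

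In the candidate, $\boxtimes$ moves every element, so there is no G\"odelian fixed point. Hence there is no conflict with Theorem \ref{BS1}(1) or Proposition \ref{prop_g2_hierarchy}. Your own constraint check already shows this: in the candidate $\bot$ is least, $\boxtimes\top=\bot$, $\Box\bot=\bot$ and $\boxtimes\bot=\top$. So $\boxtimes\Box^k\boxtimes\top=\top\neq\boxtimes\top$ for every $k$, exactly as required. The ``violation'' you report therefore contradicts your own preceding paragraph.

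As written, the proposal ends with an unspecified plan rather than a concrete, verified structure: add some $d$ with $\Box d\neq d$ and check a finite table over $(a,b,c)$. For an existence statement, that structure is the entire content. The plan is also unnecessary, and an extra element would only create new (C1)/(C3)/(C4) obligations.

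The repair is to keep the first candidate. Verify (C1)--(C5) and consistency, which you essentially did. Observe that every degree-3 term is $\boxtimes^3$ and fixes each atom, which gives (1). Then obtain (2) either from your Lemma \ref{lem_parity} argument or directly from $\triangle\top=\boxtimes^{d(\triangle)}\top=\bot$ for odd $d(\triangle)$.
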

\begin{proof} The desired APS is given in Figure~\ref{fig5}.

\begin{center}
        \begin{tikzcd}
            & \top  \arrow[rd, no head] \arrow[ld, no head] \arrow[box edge, inner loop= 90 ] \arrow[d, no head] \arrow[boxtimes edge, dd,  bend left = 70, shift left = 18, looseness=1.6]& \\ 
            p_0  \arrow[rd, no head] \arrow[box edge, inner loop = 180] \arrow[boxtimes edge, r, bend left = 20 ]& p_1 \arrow[d, no head]\arrow[boxtimes edge, r,  bend left = 20] \arrow[box edge, inner loop = 210]& p_2 \arrow[ld, no head] \arrow[box edge, inner loop = 0] \arrow[boxtimes edge, ll, bend left = 33] \\
            & \bot \arrow[box edge, inner loop = 270] \arrow[boxtimes edge, uu,  bend left = 70, shift left = 18, looseness=1.6]& 
        \end{tikzcd}

\captionof{figure}{APS for Proposition \ref{prop_no_g2}}\label{fig5}
\end{center}

    Since $\Box$ is the identity in this APS and $\boxtimes$ cyclically permutes $p_0, p_1, p_2$, we have $\triangle p_i = \boxtimes^3 p_i = p_i$ for each $i \in \{0, 1, 2\}$. 
    Hence $\mathsf{E}_S (\triangle)$ holds.
    
    Moreover, $\boxtimes$ swaps $\top$ and $\bot$ and $\Box$ fixes them. Hence, since $d(\triangle)$ is odd, we have $\triangle \top = \boxtimes^{d(\triangle)} \top = \boxtimes \top = \bot$.   
\end{proof}

However, a G2-like phenomenon can still be recovered for terms of odd degree if we consider the refutability of their fixed points rather than that of closed terms $\triangle\top$.
Indeed, every fixed point of a term of odd degree is non-refutable in a consistent APS.

\begin{thm}\label{thm_odd_nonrefutable}
Let $S$ be a consistent APS and let $\triangle v$ be a one-variable APS term of odd degree.
If $p$ is a fixed point of $\triangle v$, then $p$ is not refutable in $S$.
\end{thm}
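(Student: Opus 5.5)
Suppose $p$ is a fixed point of $\triangle v$ with $d(\triangle)$ odd, and suppose toward a contradiction that $p\leq\bot$. The plan is to use the lemmas just proved to pin down $p$ exactly, and then use Lemma \ref{lem_parity}(2) to show that $\triangle$ sends this value to $\top$.

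First, since $d(\triangle)\geq 1$ and $p=\triangle p$ is refutable, Lemma \ref{lem_refutable_value} (with $x=p$) gives $p=\triangle p=\boxtimes\top$. In particular $\boxtimes\top\leq\bot$, which is also what Lemma \ref{lem_refutable} gives. This places us exactly under the hypothesis of Lemma \ref{lem_parity}.

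Second, apply Lemma \ref{lem_parity}(2) to the odd-degree term $\triangle v$. It yields $\triangle\boxtimes\top=\top$. Combining this with the fixed-point equation,
\[
\top=\triangle\boxtimes\top=\triangle p=p\leq\bot,
\]
so $S$ is inconsistent. This contradicts the assumption, so $p$ is not refutable.

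No step is a serious obstacle, since the work was done in Lemmas \ref{lem_refutable_value} and \ref{lem_parity}. The only point needing care is identifying the refutable fixed point with $\boxtimes\top$, which is where positivity of the degree is used, before the parity lemma can be invoked. Since odd degree implies degree at least one, this is automatic.
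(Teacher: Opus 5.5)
Your proposal is correct and is essentially the paper's own argument. You use Lemma \ref{lem_refutable_value} to identify the refutable fixed point with $\boxtimes\top$, then Lemma \ref{lem_parity}(2) to get $\triangle\boxtimes\top=\top$, which forces $\top\leq\bot$.
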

\begin{proof}
Suppose, toward a contradiction, that $p \leq \bot$.
Since $p=\triangle p$, it follows from Lemma \ref{lem_refutable_value} that $p=\boxtimes\top$.
Then $\boxtimes\top\leq\bot$.

Since $d(\triangle)$ is odd, by Lemma \ref{lem_parity}, we have $\triangle\boxtimes\top=\top$.
On the other hand, since $p=\boxtimes\top$ is a fixed point of
$\triangle v$, we have
$\triangle\boxtimes\top=\boxtimes\top$.
Thus $\top=\boxtimes\top\leq\bot$, contradicting the consistency of $S$.
\end{proof}

\section{Meet APSs and L\"ob's Theorem}

So far, we have studied fixed points using only the order structure of an APS and the two unary operations $\Box$ and $\boxtimes$.
In this section, we consider APSs whose underlying partial order is a meet-semilattice.
The meet operation corresponds to conjunction in view of the intended interpretation of APSs.
The existence of meets alone, however, is not sufficient for the analysis developed below.
We therefore consider a strengthening of (C3), denoted by (C3*), that takes the meet operation into account.
After studying fixed points under this additional condition, we turn to a parametrized fixed-point property and an abstract form of L\"ob's theorem.

\begin{defn}
An APS $S=\langle L,\leq,\top,\bot,\Box,\boxtimes\rangle$ is called a \emph{meet APS} if $\langle L,\leq\rangle$ is a meet-semilattice.
For $x,y\in L$, we denote their meet by $x \land y$.
\end{defn}

The presence of the meet operation alone, however, does not provide enough interaction between $\Box$, $\boxtimes$, and $\wedge$ for the fixed-point arguments below.
We therefore consider an additional condition.
If $y$ is provable and $y\land z$ is refutable, then one may expect $z$ itself to be refutable.
This leads to the following condition.

\begin{description}
    \item[(C3*)] If $x \leq \Box y$ and $x \leq \boxtimes(y \land z)$, then $x \leq \boxtimes z$.
\end{description}

Since $L$ is a meet-semilattice, condition (C3*) is equivalently written as
\[
    \Box y\land\boxtimes(y\land z)\leq\boxtimes z.
\]
Thus, under the intended reading $\boxtimes x=\Box\neg x$, condition (C3*) may also be viewed as a counterpart of the axiom $\mathbf{K}: \Box A \land \Box (A \to B) \to \Box B$ of normal modal logic.  

Condition (C3*) is a strengthening of (C3), since (C3) is obtained from (C3*) by taking $z=\top$.
For comparison, condition (C3) can be written as
\[
    \Box y\land\boxtimes y\leq\boxtimes\top.
\]
The converse, however, does not hold in general.

\begin{prop}\label{prop_no_C3ast}
    There exists a meet APS that does not satisfy \textup{(C3*)}. 
\end{prop}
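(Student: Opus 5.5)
The plan is to reuse the four-element Boolean algebra APS from Proposition \ref{prop_eg3}, illustrated in Figure~\ref{fig3}, rather than build a new structure. Its underlying order $\bot < p, \neg p < \top$, with $p$ and $\neg p$ incomparable, is a Boolean algebra, hence in particular a meet-semilattice. Proposition \ref{prop_eg3} already establishes that it is an APS satisfying (C5), so it is a meet APS.

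The operations, read off Figure~\ref{fig3}, are as follows.
\begin{itemize}
\item $\Box$ fixes $\top$, $\bot$ and $\neg p$, and sends $p$ to $\neg p$.
\item $\boxtimes$ sends $\top$, $p$ and $\neg p$ to $\bot$, and sends $\bot$ to $\top$.
\end{itemize}
Before relying on this, I will quickly re-verify (C1)--(C5) against this reading of the figure, since the counterexample depends on it. Monotonicity of $\Box$ is clear, as is antitonicity of $\boxtimes$. (C2) holds because $\boxtimes\bot=\top$. For (C4), $\boxtimes x\in\{\bot,\top\}$ and $\Box$ fixes both. For (C3), note that $\boxtimes\top=\bot$, so I must show that $x\leq\Box y$ and $x\leq\boxtimes y$ force $x=\bot$. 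If $y\neq\bot$, then $\boxtimes y=\bot$ and this is immediate. If $y=\bot$, then $\Box y=\bot$ and again $x=\bot$.

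The witness for the failure of (C3*) is $y=p$ and $z=\neg p$, which is exactly the instance mentioned at the end of the proof of Proposition \ref{prop_eg3}. We have $\Box y=\neg p$, and $y\land z=\bot$, so $\boxtimes(y\land z)=\boxtimes\bot=\top$. Taking $x=\neg p$, both hypotheses of (C3*) hold, namely $x\leq\Box y$ and $x\leq\boxtimes(y\land z)$. However, $\boxtimes z=\boxtimes\neg p=\bot$, and $\neg p\not\leq\bot$. Equivalently, $\Box y\land\boxtimes(y\land z)=\neg p\not\leq\boxtimes z$.

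The only delicate point is reading the $\boxtimes$-arrows in Figure~\ref{fig3} correctly. If the figure instead intended $\boxtimes\neg p=p$, I would take the same $y$ and $z$. Then $\boxtimes z=p$ and $\neg p\not\leq p$, so the counterexample survives. The structure still satisfies (C4), because $\Box p=\neg p$ and $p\leq\neg p$ is false; for that reading I would simply redefine $\boxtimes\neg p=\bot$ and use the structure verified above, keeping the argument self-contained.
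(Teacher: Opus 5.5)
Your proof is correct and is essentially the paper's: the paper also takes the Boolean-algebra APS of Figure~\ref{fig3} with the instance $y=p$, $z=\neg p$, and computes $\Box p\land\boxtimes(p\land\neg p)=\neg p\not\leq\bot=\boxtimes\neg p$; your reading of the figure, in particular $\boxtimes\neg p=\bot$, matches the paper's. One slip in your contingency paragraph: under the alternative reading $\boxtimes\neg p=p$, (C4) would \emph{fail}, since it would require $p\leq\Box p=\neg p$, so that variant is not an APS at all. This is moot, because the figure does have $\boxtimes\neg p=\bot$.
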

\begin{proof}
Consider the APS given in Figure~\ref{fig3}.
It is based on a Boolean algebra, and hence it is a meet APS.
Since $p \land \neg p = \bot$, we have $\Box p \land \boxtimes(p\land \neg p) = \neg p \land\boxtimes\bot = \neg p$.
On the other hand, $\boxtimes \neg p = \bot$.
Hence $\Box p\land\boxtimes(p\land \neg p)\not\leq\boxtimes \neg p$, so (C3*) fails.

\end{proof}

\subsection{Fixed points of one-variable terms in the original language}

We investigate how the structure of a meet APS together with (C3*) affects the existence of fixed points.
In Section \ref{Sec_existence}, we showed that every one-variable APS term of degree two has a fixed point. 
On the other hand, the existence of fixed points for arbitrary terms of higher even degree remains open in general.
For meet APSs satisfying (C3*), the situation becomes much clearer.

We first consider one-variable APS terms in the original language, that is, terms constructed from $v$ using only $\Box$ and $\boxtimes$.
For meet APSs satisfying (C3*), the parity of the degree yields a complete characterization of the existence of fixed points of such one-variable APS terms.

Before proving the theorem, we prepare the following lemma.

\begin{lem}\label{lem_meet_reduction}
Let $S$ be a meet APS satisfying \textup{(C3*)} and let $x \in L$. 
\begin{enumerate}
    \item For every one-variable APS term $\triangle v$ with $d(\triangle) \geq 1$, we have $\boxtimes \triangle x = \boxtimes \Box \triangle x$. 

    \item For every one-variable APS term $\triangle v$ with $d(\triangle) \geq 1$, we have $\boxtimes \triangle x \leq \boxtimes^3 \triangle x$. 

    \item For every $k\geq 0$, we have $\boxtimes\Box^k\boxtimes x=\boxtimes^2x$.

    \item For every one-variable APS term $\triangle v$ which is of the form $\boxtimes \triangle_0 v$, we have $\triangle \boxtimes x=\boxtimes^{d(\triangle) + 1} x$.

\end{enumerate} 
\end{lem}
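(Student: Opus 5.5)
For (1), I would reduce to a single application of (C3*) with a well-chosen middle element. Put $y=\triangle x$. Since $d(\triangle)\geq 1$, we can write $y=\Box^k\boxtimes z$ for some $k \geq 0$ and some $z$. Proposition \ref{propbasicproperty}(2) then gives two facts: $y\leq\Box y$, and $\boxtimes\top\leq y$. The first, with (C1), yields $\boxtimes\Box y\leq\boxtimes y$.

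For the reverse inequality $\boxtimes y\leq\boxtimes\Box y$, I apply (C3*) with $x:=\boxtimes y$, the provable part $\boxtimes y$, and $z:=\Box y$. Two hypotheses must be checked.
\begin{itemize}
\item The first is $\boxtimes y\leq\Box\boxtimes y$, which is just (C4).
\item The second is $\boxtimes y\leq\boxtimes(\boxtimes y\land\Box y)$. By (C1) it suffices to show $\boxtimes y\land\Box y\leq y$. Set $u:=\boxtimes y\land\Box y$. Then $u\leq\Box y$ and $u\leq\boxtimes y$, so (C3) gives $u\leq\boxtimes\top$. Combined with $\boxtimes\top\leq y$, this gives $u\leq y$.
\end{itemize}
(C3*) then gives $\boxtimes y\leq\boxtimes\Box y$. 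The main point, and the step I expect to need the most care, is choosing $\boxtimes y$ as the ``provable'' element in (C3*). This works because (C3) collapses the meet to something below $\boxtimes\top$, and $\boxtimes\top$ sits below every value of a term of positive degree.

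For (2) I would run the same pattern with $z:=\boxtimes\boxtimes y$, again taking $\boxtimes y$ as the provable element. The first hypothesis is $\boxtimes y\leq\Box\boxtimes y$, which is (C4). For the second, it suffices by (C1) to show $\boxtimes y\land\boxtimes\boxtimes y\leq y$. Put $u:=\boxtimes y\land\boxtimes\boxtimes y$. Then $u\leq\boxtimes y\leq\Box\boxtimes y$ and $u\leq\boxtimes(\boxtimes y)$, so (C3) applied with the element $\boxtimes y$ gives $u\leq\boxtimes\top\leq y$. Hence $\boxtimes y\leq\boxtimes(\boxtimes y\land\boxtimes\boxtimes y)$, and (C3*) yields $\boxtimes y\leq\boxtimes^3 y$.

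For (3) I would use induction on $k$, applying (1) to the degree-one term $\Box^j\boxtimes v$. This gives $\boxtimes\Box^j\boxtimes x=\boxtimes\Box^{j+1}\boxtimes x$ for each $j$, and chaining from $j=0$ yields $\boxtimes\Box^k\boxtimes x=\boxtimes\boxtimes x$.

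For (4), I write $\triangle v=\boxtimes\Box^{k_1}\boxtimes\Box^{k_2}\cdots\boxtimes\Box^{k_n}v$ with $n=d(\triangle)$. Then $\triangle\boxtimes x=\boxtimes\Box^{k_1}\boxtimes\cdots\boxtimes\Box^{k_n}\boxtimes x$, and I evaluate from the inside out. Applying (3) to $\boxtimes\Box^{k_n}\boxtimes x$ replaces it by $\boxtimes^2x$. The next layer is then $\boxtimes\Box^{k_{n-1}}\boxtimes(\boxtimes x)$, and (3) applied to the element $\boxtimes x$ turns it into $\boxtimes^3x$. After $n$ such steps, an induction on $n$, we obtain $\boxtimes^{n+1}x$, as required.
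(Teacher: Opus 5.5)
Your proposal is correct and follows the paper's proof essentially step for step. In (1) and (2) you apply (C3*) with $\boxtimes\triangle x$ as both the bounded element and the ``provable'' part, get the first hypothesis from (C4), and get the second from (C3) together with $\boxtimes\top\leq\triangle x$ and (C1); (3) and (4) are obtained, as in the paper, by chaining (1) along $\Box^j\boxtimes v$ and then applying (3) repeatedly from the inside out.
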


\begin{proof}
1. Since $\triangle v$ is of the form $\Box^k \boxtimes \triangle_0 v$ for some $k \geq 0$ and term $\triangle_0 v$, by Proposition \ref{propbasicproperty}(2), $\triangle x \leq \Box \triangle x$. 
By (C1), $\boxtimes \Box \triangle x \leq \boxtimes \triangle x$. 

For the converse, we have $\boxtimes \triangle x \leq \Box \boxtimes \triangle x$ by (C4). 
Also, we have $(\boxtimes \triangle x \land \Box \triangle x) \leq \boxtimes \top$ by (C3). 
By Proposition \ref{propbasicproperty}(2), $\boxtimes \top \leq \triangle x$, and hence $(\boxtimes \triangle x \land \Box \triangle x) \leq \triangle x$. 
By (C1), $\boxtimes \triangle x \leq \boxtimes (\boxtimes \triangle x \land \Box \triangle x)$. 
Applying (C3*) to this and $\boxtimes \triangle x \leq \Box \boxtimes \triangle x$, we obtain $\boxtimes \triangle x \leq \boxtimes \Box \triangle x$.

\medskip

2. 
Since $\boxtimes \triangle x \leq \Box \boxtimes \triangle x$ by (C4), we have $(\boxtimes \triangle x \land \boxtimes \boxtimes \triangle x) \leq (\Box \boxtimes \triangle x \land \boxtimes \boxtimes \triangle x) \leq \boxtimes \top$ by (C3). 
By Proposition \ref{propbasicproperty}(2), $\boxtimes \top \leq \triangle x$ because $d(\triangle) \geq 1$. 
Hence $(\boxtimes \triangle x \land \boxtimes \boxtimes \triangle x) \leq \triangle x$. 
By (C1), $\boxtimes \triangle x \leq \boxtimes (\boxtimes \triangle x \land \boxtimes \boxtimes \triangle x)$. 
Applying (C3*) to this and $\boxtimes \triangle x \leq \Box \boxtimes \triangle x$, we obtain $\boxtimes \triangle x \leq \boxtimes \boxtimes \boxtimes \triangle x$.

\medskip

3. From (1), for every $j\geq0$, $\boxtimes\Box^j \boxtimes x = \boxtimes\Box^{j+1} \boxtimes x$ by taking $\triangle v = \Box^j \boxtimes v$.
It follows that $\boxtimes \boxtimes x = \boxtimes \Box \boxtimes x = \boxtimes \Box^2 \boxtimes x = \cdots = \boxtimes\Box^k\boxtimes x$ for every $k\geq0$.

\medskip

4. We can write
\[
    \triangle v= \boxtimes\Box^{k_1}\boxtimes\Box^{k_2} \cdots \boxtimes \Box^{k_n}v, 
\]
    where $n = d(\triangle)$.
    By applying (3) repeatedly, we obtain $\triangle \boxtimes x = \boxtimes^{n+1} x$. 
\end{proof}

\begin{thm}\label{thm_meet_parity}
Let $S$ be a meet APS satisfying \textup{(C3*)}, and let $\triangle v$ be a
one-variable APS term in the original language.
Then the following statements hold:
\begin{enumerate}
    \item If $d(\triangle)$ is even, then $\mathsf{E}_S(\triangle)$ holds.

    \item If $d(\triangle)$ is odd, then
    $\mathsf{E}_S(\triangle) \iff \mathsf{E}_S(\boxtimes)$.
\end{enumerate}
\end{thm}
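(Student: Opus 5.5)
The plan is to reduce to terms beginning with $\boxtimes$ and then use Lemma \ref{lem_meet_reduction}(4) to collapse $\triangle\boxtimes x$ to a pure power $\boxtimes^{d(\triangle)+1}x$. By Proposition \ref{circulation}, $\mathsf{E}_S(\triangle)$ is unchanged if we move the initial block $\Box^{k_0}$ to the end. So for $d(\triangle)=n\geq 1$ we may assume $\triangle v=\boxtimes\Box^{k_1}\boxtimes\Box^{k_2}\cdots\boxtimes\Box^{k_n}v$. If $n=0$, then $\triangle v=\Box^k v$ and $\top$ is a fixed point by Proposition \ref{propbasicproperty}(1).

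\emph{Even degree.} First I record that $\boxtimes^3\top=\boxtimes\top$. This is Theorem \ref{2boxtimesfp}(1) with $m=n=0$: $\boxtimes\boxtimes v$ has fixed point $\boxtimes\top$. By induction, $\boxtimes^{2j+1}\top=\boxtimes\top$ for all $j\geq 0$. Now take $x=\top$ in Lemma \ref{lem_meet_reduction}(4). This gives $\triangle\boxtimes\top=\boxtimes^{n+1}\top$. Since $n$ is even, $n+1$ is odd, so this equals $\boxtimes\top$. Hence $\boxtimes\top$ is a fixed point of $\triangle v$, and $\mathsf{E}_S(\triangle)$ holds.

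\emph{Odd degree.} The direction $\mathsf{E}_S(\boxtimes)\Rightarrow\mathsf{E}_S(\triangle)$ is Corollary \ref{cor_hierarchy_godel}. For the converse, let $p$ be a fixed point of $\triangle v$. Since $\triangle$ begins with $\boxtimes$, we can write $p=\boxtimes q$ with $q=\Box^{k_1}\boxtimes\cdots\Box^{k_n}p$. Lemma \ref{lem_meet_reduction}(4) then gives
\[
  p=\triangle p=\triangle\boxtimes q=\boxtimes^{n+1}q=\boxtimes^n p ,
\]
so $p$ is a fixed point of $\boxtimes^n$. I will show that $\boxtimes p$ is a G\"odelian fixed point.

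Every element $z=\boxtimes^{i}p$ with $i\geq 0$ is a value of a term of degree $\geq1$ (for $i=0$ use $p=\boxtimes^n p$). So Lemma \ref{lem_meet_reduction}(2) applies and gives $\boxtimes z\leq\boxtimes^3 z$. Chaining this inequality yields $\boxtimes p\leq\boxtimes^3p\leq\cdots\leq\boxtimes^{n+2}p=\boxtimes^2p$, where the last equality uses $\boxtimes^n p=p$. It also yields $\boxtimes^2p\leq\boxtimes^4p\leq\cdots\leq\boxtimes^{n+1}p=\boxtimes p$, where both chains step by two and the endpoints have the right parity because $n$ is odd. Therefore $\boxtimes p=\boxtimes\boxtimes p$, and $\mathsf{E}_S(\boxtimes)$ holds.

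The main obstacle is this last step: extracting a G\"odelian fixed point from a fixed point of $\boxtimes^n$. A naive sandwich between $\boxtimes\top$ and $\boxtimes^2\top$ is circular, since it already presupposes $\mathsf{E}_S(\boxtimes)$. The monotone chains from Lemma \ref{lem_meet_reduction}(2), run along the periodic orbit of $p$, are what break the circle, so the parity bookkeeping at the endpoints has to be checked carefully.
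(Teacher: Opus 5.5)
Your proposal is correct and follows the paper's proof essentially step for step. Both arguments circulate to a term beginning with $\boxtimes$. The even case then uses Lemma~\ref{lem_meet_reduction}(4) together with $\boxtimes^3\top=\boxtimes\top$. The odd case uses $p=\boxtimes^n p$ followed by interleaved monotone chains from Lemma~\ref{lem_meet_reduction}(2).

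The only difference is cosmetic. You start your chains at $\boxtimes p$ and $\boxtimes^2 p$ rather than at $p$, which spares you the paper's separate check that $p\le\boxtimes^2 p$. You then conclude that $\boxtimes p$ is G\"odelian instead of $p$ itself, which suffices.
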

\begin{proof}
1. If $d(\triangle)=0$, then $\triangle v=\Box^k v$ for some $k\geq0$,
and $\top$ is a fixed point of $\triangle v$ by Proposition \ref{propbasicproperty}(1).

Suppose that $d(\triangle)=n\geq2$.
By Proposition \ref{circulation}, we may assume that $\triangle v$ is of the form $\boxtimes \triangle_0 v$ for some term $\triangle_0 v$. 
By Lemma \ref{lem_meet_reduction}(4), $\triangle \boxtimes\top = \boxtimes^{d(\triangle)+1}\top$.
By Theorem \ref{2boxtimesfp}(1), we have $\boxtimes^3\top=\boxtimes\top$. 
Hence $\boxtimes^{d(\triangle)+1}\top=\boxtimes\top$ since $d(\triangle)+1$ is odd.
Therefore $\triangle \boxtimes\top = \boxtimes\top$, and so $\mathsf{E}_S(\triangle)$ holds.

\medskip

2. $(\Leftarrow)$: The implication $\mathsf{E}_S(\boxtimes)\Rightarrow\mathsf{E}_S(\triangle)$ follows from Corollary \ref{cor_hierarchy_godel}.

\medskip

$(\Rightarrow)$: 
Suppose that $d(\triangle)$ is odd and $\mathsf{E}_S(\triangle)$ holds.
By Proposition \ref{circulation}, we may assume that $\triangle v$ is of the form $\boxtimes \triangle_0 v$ for some term $\triangle_0 v$. 
Let $p \in L$ be a fixed point of $\triangle v$, that is, $p = \triangle p$.
By Lemma \ref{lem_meet_reduction}(4), we get $p = \boxtimes^{d(\triangle)} p$ because
\[
     p = \triangle p = \triangle \boxtimes \triangle_0 p = \boxtimes^{d(\triangle)+1} \triangle_0 p = \boxtimes^{d(\triangle)} \boxtimes \triangle_0 p = \boxtimes^{d(\triangle)} \triangle p = \boxtimes^{d(\triangle)} p.  
\]
For $i \geq 2$, we have $\boxtimes^i p \leq \boxtimes^{i+2} p$ because it follows from Lemma \ref{lem_meet_reduction}(2) that
\[
    \boxtimes^i p = \boxtimes (\boxtimes^{i-1} p) \leq \boxtimes^3 (\boxtimes^{i-1} p) = \boxtimes^{i+2} p. 
\]
The cases $i = 0, 1$ follow similarly from Lemma \ref{lem_meet_reduction}(2), using $p = \boxtimes \triangle_0 p = \boxtimes \triangle_0 \boxtimes \triangle_0 p$. 

Since $d(\triangle)$ is odd, we have
\[
    p \leq \boxtimes^2 p \leq\cdots\leq \boxtimes^{d(\triangle)+1} p = \boxtimes p \leq \boxtimes^3 p\leq\cdots\leq \boxtimes^{d(\triangle)} p = p.
\]
Thus $p = \boxtimes p$.
Therefore $\mathsf{E}_S(\boxtimes)$ holds.
\end{proof}

The existence result in Theorem \ref{thm_meet_parity}(1) cannot be strengthened to uniqueness in general.

\begin{prop}
There exist a meet APS $S$ satisfying \textup{(C3*)} and a one-variable APS term $\triangle v$ of even degree such that $\mathsf{U}_S(\triangle)$ does not hold.
\end{prop}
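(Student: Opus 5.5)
The plan is to reuse the two-element APS of Proposition \ref{prop_eg2} (Figure \ref{fig2}). Its underlying order is the chain $\bot \lneq \top$, $\Box$ is the identity, and $\boxtimes$ interchanges $\top$ and $\bot$. That proposition already establishes (C1)--(C5). Since every chain is a meet-semilattice, with $x \land y = \min\{x,y\}$, this $S$ is a meet APS. The remaining work is to check (C3*) and to exhibit a term of even degree with two fixed points.

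For (C3*) I would use the equivalent form $\Box y \land \boxtimes(y \land z) \leq \boxtimes z$ and split on $z$.
\begin{itemize}
\item If $z = \bot$, then $\boxtimes z = \top$, and the inequality holds by (C5).
\item If $z = \top$, then $y \land z = y$ and $\Box y = y$. The left-hand side is therefore $y \land \boxtimes y$. Since $\boxtimes$ swaps the two elements, $y$ and $\boxtimes y$ are $\top$ and $\bot$ in some order, so their meet is $\bot$. This equals $\boxtimes \top$, as required.
\end{itemize}
So (C3*) holds. This is the only step that needs any care, and it is a finite check on a two-element chain.

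For the term I take $\triangle v = \boxtimes\boxtimes v$, which has degree two. Because $\boxtimes$ is an involution here, $\triangle$ is the identity map on $L$. Hence both $\top$ and $\bot$ are fixed points, and they are distinct, so $\mathsf{U}_S(\triangle)$ fails.

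Choosing a degree-two term, rather than the trivial degree-zero term $v$ (which is also the identity), shows that non-uniqueness already occurs at the first nontrivial even degree. This is consistent with the existence guaranteed by Theorem \ref{thm_meet_parity}(1).
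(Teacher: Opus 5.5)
Your proposal is correct and is essentially the paper's proof. The paper uses the same two-element APS of Figure~\ref{fig2}, notes that it is a meet APS satisfying (C3*), and observes that $\top$ and $\bot$ are both fixed points of $\boxtimes^2 v$. Your explicit case check of (C3*) on $z$ supplies the verification that the paper leaves implicit.
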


\begin{proof}
Consider the APS in Figure \ref{fig2}.
It is a meet APS satisfying (C3*), and both $\top$ and $\bot$ are fixed points of $\boxtimes^2v$.
Thus $\mathsf{U}_S(\boxtimes^2)$ does not hold.
\end{proof}

\subsection{Fixed points of one-variable meet APS terms}

A \emph{one-variable meet APS term} is generated from the variable $v$ by finitely many applications of $\Box$, $\boxtimes$, and $\land$.
We extend the notion of degree to such terms as follows. 

\begin{defn}[degree]
The \emph{degree} $d(\triangle)$ of a one-variable meet APS term $\triangle v$ is recursively defined as follows: 
\begin{enumerate}
    \item $d(v)=0$, 
    \item $d(\Box\triangle_0)=d(\triangle_0)$, 
    \item $d(\boxtimes\triangle_0)=d(\triangle_0)+1$, 
    \item $d(\triangle_0\land\triangle_1) = \max\{d(\triangle_0),d(\triangle_1)\}$. 
\end{enumerate}
\end{defn}

For the analysis of one-variable meet APS terms, the following parameter will play an important role.

\begin{defn}
For each one-variable meet APS term $\triangle v$, we define $\mu(\triangle)\in\mathbb{N}\cup\{\infty\}$ recursively as follows:
\begin{enumerate}
    \item $\mu(v)=\infty$, 
    \item $\mu(\Box\triangle)=\mu(\triangle)+1$, 
    \item $\mu(\boxtimes\triangle)=0$, 
    \item $\mu(\triangle_0\land\triangle_1) = \min\{\mu(\triangle_0), \mu(\triangle_1)\}$.
\end{enumerate}
Here, $\infty+1=\infty$.
\end{defn}

In particular, $\mu(\triangle)=\infty$ if and only if $d(\triangle)=0$.
We first use this parameter to extend the existence part of Corollary \ref{cor_hierarchy_godel} to one-variable meet APS terms.

\begin{prop}\label{prop_meet_godel}
Let $S$ be a meet APS. 
Then for every one-variable meet APS term $\triangle v$, 
\[
    \mathsf{E}_S(\boxtimes) \Rightarrow \mathsf{E}_S(\triangle). 
\]
\end{prop}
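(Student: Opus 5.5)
The plan is to look for a fixed point inside the chain
\[
\boxtimes\top\leq\Box\boxtimes\top\leq\Box^2\boxtimes\top\leq\cdots
\]
given by Proposition \ref{propbasicproperty}(2). On this chain every one-variable meet APS term acts in a simple, explicitly computable way once a G\"odelian fixed point exists.

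First I dispose of the case $d(\triangle)=0$. Such a term is built from $v$ by $\Box$ and $\land$ only. Since $\Box\top=\top$ by Proposition \ref{propbasicproperty}(1) and $\top\land\top=\top$, a trivial induction shows that $\top$ is a fixed point.

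Now assume $\mathsf{E}_S(\boxtimes)$ and $d(\triangle)\geq 1$. By Proposition \ref{prop_hierarchy}(1),(3) we have $\boxtimes\Box^j\boxtimes\top=\boxtimes\top$ for every $j\geq 0$. Write $b_j=\Box^j\boxtimes\top$, so the $b_j$ form a chain and $b_i\land b_j=b_{\min\{i,j\}}$. The key lemma, proved by induction on the construction of a meet term $\sigma v$, is the following.

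\begin{quote}
There exist $c_\sigma,m_\sigma\in\mathbb{N}\cup\{\infty\}$ such that $\sigma b_i=b_{\min\{c_\sigma,\,i+m_\sigma\}}$ for all $i$. Moreover, $c_\sigma<\infty$ whenever $d(\sigma)\geq1$.
\end{quote}

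Here $m_\sigma$ is the least number of $\Box$'s above an occurrence of $v$ not in the scope of any $\boxtimes$, and $c_\sigma$ is essentially $\mu(\sigma)$. The inductive steps are as follows.
\begin{itemize}
\item For $v$, take $c=\infty$ and $m=0$.
\item For $\Box\sigma$, both parameters increase by one.
\item For $\boxtimes\sigma$, the value is $\boxtimes b_{j}=\boxtimes\top=b_0$ whatever $j$ is, so $c=0$ and $m=\infty$.
\item For $\sigma_0\land\sigma_1$, take the minima of the respective parameters, using that the $b_j$ form a chain.
\end{itemize}
The constant $c_\sigma$ is finite as soon as some $\boxtimes$ occurs, because the $\boxtimes$-step produces $c=0$ and the other steps preserve finiteness.

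It remains to solve $\min\{c,i+m\}=i$ in $\mathbb{N}$ with $c=c_\triangle<\infty$ and $m=m_\triangle$.
\begin{itemize}
\item If $m\geq 1$ (including $m=\infty$), take $i=c$. Then $\min\{c,c+m\}=c$.
\item If $m=0$, take $i=0$. Then $\min\{c,0\}=0$.
\end{itemize}
In either case $b_i$ is a fixed point of $\triangle v$, which gives $\mathsf{E}_S(\triangle)$.

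The main obstacle is making the inductive bookkeeping for $\land$ correct. The parameters must combine as minima, and this relies on the $b_j$ being linearly ordered, so that the meet of two chain elements is again a chain element with the smaller index. One must also check that no use of (C3*) is needed: only (C1)--(C5), the meet-semilattice structure, and Proposition \ref{prop_hierarchy} enter, which is consistent with the statement assuming only a meet APS.
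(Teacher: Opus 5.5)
Your proposal is correct and is essentially the paper's proof. Like you, the paper restricts to the chain $\Box^k\boxtimes\top$, shows by induction that $\triangle\Box^k\boxtimes\top=\Box^{\min\{k+a,\mu(\triangle)\}}\boxtimes\top$ (your $c_\sigma$ is exactly $\mu(\sigma)$), and takes $\Box^{\mu(\triangle)}\boxtimes\top$ as the fixed point, or $\top$ in degree zero; the only differences are cosmetic: the paper sets $a=0$ rather than $m=\infty$ at a $\boxtimes$-step, and your final case split is unnecessary since $i=c$ works for every $m\geq 0$.
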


\begin{proof}
Suppose that $\mathsf{E}_S(\boxtimes)$ holds. 
By Proposition \ref{prop_hierarchy}(3), $\mathsf{E}_S(\boxtimes \Box^k)$ holds for every $k \geq 0$. 
By Proposition \ref{prop_hierarchy}(1), we obtain $\boxtimes \Box^k \boxtimes \top = \boxtimes \top$ for every $k \geq 0$. 

We prove by induction on the construction of $\triangle v$ that there exists a natural number $a$ such that 
\[
    \triangle \Box^k \boxtimes \top = \Box^{\min\{k+a, \mu(\triangle)\}} \boxtimes \top
\]
for every $k \geq 0$, where $\min\{k+a,\infty\}=k+a$.

For $\triangle v=v$, we have $\mu(\triangle) = \infty$ and $\triangle \Box^k \boxtimes \top = \Box^{\min\{k+0, \infty\}} \boxtimes \top$. 
So let $a=0$.

Suppose that the statement holds for $\triangle_0 v$ and $\triangle_1 v$ with parameters $a_0$ and $a_1$, respectively. 

If $\triangle v$ is $\Box\triangle_0 v$, then $\mu(\triangle) = \mu(\triangle_0) + 1$ and 
\[
    \Box \triangle_0 \Box^k \boxtimes \top = \Box \Box^{\min\{k+a_0, \mu(\triangle_0)\}} \boxtimes \top = \Box^{\min\{k+a_0+1, \mu(\triangle_0)+1\}} \boxtimes \top.
\]
Hence let $a = a_0+1$. 

If $\triangle v$ is $\boxtimes\triangle_0 v$, then $\mu(\triangle) = 0$ and 
\[
    \boxtimes \triangle_0 \Box^k \boxtimes \top = \boxtimes \Box^{\min\{k+a_0, \mu(\triangle_0)\}} \boxtimes \top = \boxtimes \top.
\]
Thus, let $a = 0$. 

If $\triangle v$ is $\triangle_0 v\land\triangle_1 v$, then let $a = \min\{a_0,a_1\}$ because we have $\mu(\triangle) = \min \{\mu(\triangle_0), \mu(\triangle_1)\}$ and 
\begin{align*}
    \triangle_0 \Box^k \boxtimes \top \land \triangle_1 \Box^k \boxtimes \top & = \Box^{\min\{k+a_0, \mu(\triangle_0)\}} \boxtimes \top \land \Box^{\min\{k+a_1, \mu(\triangle_1)\}} \boxtimes \top \\
    & = \Box^{\min\{k+a, \mu(\triangle)\}} \boxtimes \top. 
\end{align*}
Here, we used the inequality $\Box^i \boxtimes \top \leq \Box^j \boxtimes \top$ for $i \leq j$ that is obtained from Proposition \ref{propbasicproperty}(2). 

Let $\triangle v$ be a one-variable meet APS term. 
We find a parameter $a$ for $\triangle v$ fulfilling the above condition. 

Suppose first that $\mu(\triangle) = \infty$.
Then $\triangle v$ contains no occurrence of $\boxtimes$.
Since $\Box\top=\top$, it is easily shown that $\triangle\top=\top$.
Thus $\top$ is a fixed point of $\triangle v$.

Suppose next that $\mu(\triangle) < \infty$.
Then
\[
    \triangle \Box^{\mu(\triangle)} \boxtimes \top = \Box^{\min\{\mu(\triangle) + a, \mu(\triangle)\}} \boxtimes \top = \Box^{\mu(\triangle)} \boxtimes \top.
\]
Hence $\Box^{\mu(\triangle)} \boxtimes \top$ is a fixed point of $\triangle v$.

In either case, we obtain that $\mathsf{E}_S(\triangle)$ holds. 
\end{proof}

Proposition \ref{prop_meet_godel} extends the existence part of Corollary \ref{cor_hierarchy_godel} to one-variable meet APS terms.
We emphasize that (C3*) is not assumed in this proposition.
The uniqueness part, however, does not extend in general, even under (C3*).

\begin{prop}\label{prop_meet_nonunique}
There exist a meet APS $S$ satisfying \textup{(C3*)} and a one-variable meet APS term $\triangle v$ such that $\mathsf{E}_S(\boxtimes)$ holds but $\mathsf{U}_S(\triangle)$ does not hold.
\end{prop}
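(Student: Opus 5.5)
The plan is to exhibit a very small, degenerate meet APS in which every condition holds trivially and $\boxtimes$ is constant. Take the two-element chain $L=\{\bot,\top\}$ with $\bot<\top$, which is a meet-semilattice, and define $\Box$ as the identity and $\boxtimes x=\top$ for every $x\in L$.

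First, I will check that $S$ is a meet APS satisfying (C5) and (C3*).
\begin{itemize}
\item (C5) holds because $\top$ is the greatest element.
\item (C1) holds because $\Box$ is monotone and a constant map is trivially antitone.
\item (C2) holds since $\top\le\boxtimes\bot=\top$.
\item (C4) holds since $\boxtimes x=\top=\Box\top=\Box\boxtimes x$.
\item (C3*) holds because its conclusion $x\le\boxtimes z=\top$ is always true; in particular (C3) holds.
\end{itemize}
None of these checks presents any difficulty.

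Second, $\top$ is a G\"odelian fixed point, since $\boxtimes\top=\top$. Hence $\mathsf{E}_S(\boxtimes)$ holds.

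Third, I choose a meet term of positive degree, so that the example is not merely the trivial identity term. Let $\triangle v:=\boxtimes v\land v$, which has degree $1$. For every $x\in L$ we have $\triangle x=\top\land x=x$, so both $\bot$ and $\top$ are fixed points. As $\bot\neq\top$, $\mathsf{U}_S(\triangle)$ fails.

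The only point needing care is that the statement does not require consistency, and the structure is in fact consistent since $\top\not\le\bot$. One might object that it is degenerate, because $\boxtimes\top=\top$ means that $\boxtimes\top$ is provable. If a less degenerate example were wanted, I would try enlarging the chain while keeping $\boxtimes$ constantly equal to $\boxtimes\top$ on a middle element. For the proposition as stated, however, the two-element example suffices. It shows that the uniqueness part of Corollary \ref{cor_hierarchy_godel} cannot extend to meet terms: the $\land$ with $v$ reintroduces an order-preserving identity component that the Gödelian fixed point cannot control.
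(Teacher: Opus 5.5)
Your proof is correct. The two-element chain with $\Box$ the identity and $\boxtimes$ constantly $\top$ satisfies (C1)--(C5) and (C3*) trivially, since every required inequality has $\top$ on the right. Also $\top$ is a G\"odelian fixed point, and $\boxtimes v\land v$ collapses to the identity map, so $\bot$ and $\top$ are two distinct fixed points.

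The paper uses the same term, written $v\land\boxtimes v$, on a different model. It takes the three-element chain $\bot<p<\top$ with $\Box$ the identity, $\boxtimes\top=\boxtimes p=p$ and $\boxtimes\bot=\top$. There the fixed points of $\triangle$ are $p$ and $\bot$, while $\top$ is not one, since $\triangle\top=\top\land p=p$. This is essentially the enlargement you sketch in your closing remark.

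Your version has the advantage that every verification is immediate, because $\boxtimes$ is constant. The paper's version avoids the degeneracies you yourself flag. In it, $\boxtimes\top$ is not provable, so the structure does not ``prove its own inconsistency'' under the intended reading. Also, $\triangle$ is not the identity, so the failure of uniqueness comes from an extra fixed point $\bot$ beyond the expected one $\boxtimes\top=p$, rather than from $\triangle$ fixing everything. For the proposition as stated, which imposes no such requirements, both examples suffice.
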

\begin{proof}

    The desired APS is given in Figure~\ref{fig7}.

\begin{center}
        \begin{tikzcd}
             \top \arrow [d, no head]
             \arrow[box edge, inner loop = 90]
             \arrow[boxtimes edge, d , bend left=30, shift left = 5]\\ 
             p \arrow[d, no head] \arrow[box edge, inner loop = 180] \arrow[boxtimes edge, outer loop = 180]\\
             \bot \arrow[box edge, inner loop = 270] \arrow[boxtimes edge, uu, bend left = 30, shift left = 10 ]
        \end{tikzcd}

\captionof{figure}{APS for Proposition \ref{prop_meet_nonunique}}\label{fig7}
\end{center}

 We have $\boxtimes p = p$ and therefore $\mathsf{E}_S(\boxtimes)$ holds. On the other hand, let $\triangle v := v \land \boxtimes v$. Then both $p$ and $\bot$ are distinct fixed points of $\triangle v$; $\mathsf{U}_S(\triangle)$ does not hold.
\end{proof}

We next ask whether Theorem \ref{thm_meet_parity} extends to one-variable meet APS terms.
The answer is negative, that is, even a one-variable meet APS term of degree two need not have a fixed point.

\begin{prop}\label{prop_meet_degree_two_no_fp}
There exist a meet APS $S$ satisfying \textup{(C3*)} and a one-variable meet APS term $\triangle v$ of degree two such that $\mathsf{E}_S(\triangle)$ does not hold.
\end{prop}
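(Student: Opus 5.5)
The plan is to construct a small finite counterexample by hand. The idea is to take an APS in which $\boxtimes v$ itself has no fixed point, and then disguise $\boxtimes v$ as a meet APS term of degree two that computes the same operation. The two-element APS of Figure~\ref{fig2} cannot serve: there $\boxtimes^2$ is the identity, and a short case check shows that every degree-two meet APS term inducing the swap of $\top$ and $\bot$ collapses to degree one. So I would instead use a three-element chain on which $\boxtimes\boxtimes$ is not the identity.

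Concretely, let $L=\{\bot,p,\top\}$ with $\bot<p<\top$, so that meet is minimum. Define
\[
\boxtimes\top=\bot,\quad \boxtimes p=\boxtimes\bot=\top,\qquad
\Box\top=\top,\quad \Box p=\Box\bot=\bot .
\]
The first step is to verify that $S$ is an APS satisfying (C5) and (C3*).

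\begin{itemize}
\item (C1) holds because $\Box$ is monotone and $\boxtimes$ is antitone on the chain.
\item (C2) and (C5) are immediate.
\item (C4) holds because $\boxtimes$ only takes the values $\top$ and $\bot$, and both are fixed by $\Box$.
\item For (C3*), written as $\Box y\land\boxtimes(y\land z)\leq\boxtimes z$, split on $y$. If $y=\top$, the left side is $\boxtimes z$, so the inequality holds trivially. If $y\in\{p,\bot\}$, then $\Box y=\bot$, so the left side is $\bot$.
\item (C3) follows from (C3*) by taking $z=\top$.
\end{itemize}

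In this structure $\boxtimes v$ has no fixed point, since $\boxtimes$ sends $\top\mapsto\bot$, $p\mapsto\top$ and $\bot\mapsto\top$.

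The second step is to choose the term
\[
\triangle v:=\boxtimes v\land\boxtimes(v\land\boxtimes v).
\]
By the recursive definition of degree for meet APS terms, $d(\triangle)=\max\{1,2\}=2$. The evaluation goes as follows, always listing values at $\top,p,\bot$ in that order.

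\begin{itemize}
\item $\boxtimes v$ takes the values $\bot,\top,\top$.
\item $v\land\boxtimes v$ takes the values $\bot,p,\bot$.
\item $\boxtimes(v\land\boxtimes v)$ takes the values $\top,\top,\top$.
\item Hence $\triangle v$ takes the values $\bot,\top,\top$, so it agrees with $\boxtimes v$ and has no fixed point.
\end{itemize}

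The main obstacle is finding the structure itself: it must satisfy (C3*) while $\boxtimes$ has no fixed point and $\boxtimes^2$ is not the identity. The key device is letting $\Box$ collapse everything below $\top$ to $\bot$, which makes (C3) and (C3*) almost vacuous. Once the structure is fixed, the remaining work is only the finite checks of the axioms and the evaluation of $\triangle$ above.
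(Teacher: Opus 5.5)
Your proof is correct. On the three-element chain with $\Box p=\Box\bot=\bot$, conditions (C1)--(C5) and (C3*) hold exactly as you verify. The term $\triangle v=\boxtimes v\land\boxtimes(v\land\boxtimes v)$ sends $\top\mapsto\bot$, $p\mapsto\top$ and $\bot\mapsto\top$, so it has no fixed point.

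The paper uses the very same term, but on the two-element APS of Figure~\ref{fig2}. Your claim that this structure ``cannot serve'' is therefore wrong.

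\begin{itemize}
\item In Figure~\ref{fig2}, (C3*) holds because $\Box$ is the identity. The left side $y\land\boxtimes(y\land z)$ equals $\bot$ when $y=\bot$ and equals $\boxtimes z$ when $y=\top$.
\item Moreover, $v\land\boxtimes v$ is constantly $\bot$, so $\boxtimes(v\land\boxtimes v)$ is constantly $\top$.
\item Hence $\triangle$ computes the swap: $\triangle\top=\bot$ and $\triangle\bot=\top$.
\end{itemize}

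The degree $d(\triangle)$ is defined syntactically. So it is no obstruction that $\triangle$ induces the same operation as $\boxtimes v$. Your own $\triangle$ also coincides with $\boxtimes v$ on your chain, so the criterion you used to discard Figure~\ref{fig2} would discard your example as well.

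Consequently, the requirement that $\boxtimes^2$ not be the identity, and the extra element $p$, are unnecessary. Your example is a valid but slightly more elaborate alternative to the paper's minimal one.
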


\begin{proof}
Consider the APS in Figure~\ref{fig2}, which satisfies (C3*). 
Let $\triangle v := \boxtimes v \land \boxtimes (v \land \boxtimes v)$, which is of degree two. It is easy to check that  $\triangle \top = \bot$ and $\triangle \bot = \top$. Hence $\triangle v$ has no fixed point, that is, $\mathsf{E}_S(\triangle)$ does not hold.
\end{proof}

Although Theorem \ref{thm_meet_parity}(1) does not extend to arbitrary one-variable meet APS terms, it can be extended under an appropriate parity condition.
We first show that, under this condition, the value of a one-variable meet APS term at $\boxtimes\top$ is determined by its degree and the parameter $\mu$.
For this purpose, we prepare the following lemma.

\begin{lem}\label{lem_meet_reduction2}
Let $S$ be a meet APS satisfying \textup{(C3*)}, and let $\triangle v$ be a one-variable meet APS term.
Suppose that, for every subterm of the form $\triangle_0 v\land\triangle_1 v$, the degrees $d(\triangle_0)$ and $d(\triangle_1)$ have the same parity.
Then there exists a natural number $a$ such that, for every $k\geq0$,
the following statements hold:
\begin{enumerate}
    \item If $d(\triangle)$ is even, then $\triangle\Box^k\boxtimes\top = \Box^{\min\{k+a,\mu(\triangle)\}}\boxtimes\top$. 

    \item If $d(\triangle)$ is odd, then $\triangle\Box^k\boxtimes\top = \Box^{\min\{k+a,\mu(\triangle)\}}\boxtimes^2\top$. 
\end{enumerate}
\end{lem}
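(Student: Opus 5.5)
The plan is to mimic the induction in the proof of Proposition \ref{prop_meet_godel}. Under (C3*), two identities replace the hypothesis $\mathsf{E}_S(\boxtimes)$ used there. First, by Lemma \ref{lem_meet_reduction}(3) with $x=\top$, we have $\boxtimes\Box^m\boxtimes\top=\boxtimes^2\top$ for every $m\geq 0$. Second, by Lemma \ref{lem_meet_reduction}(3) with $x=\boxtimes\top$, we have $\boxtimes\Box^m\boxtimes^2\top=\boxtimes^3\top$, and $\boxtimes^3\top=\boxtimes\top$ by Theorem \ref{2boxtimesfp}(1). So $\boxtimes\Box^m\boxtimes^2\top=\boxtimes\top$. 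I would isolate these two facts first. Together with Proposition \ref{propbasicproperty}(2), which gives the chains $\Box^i\boxtimes\top\leq\Box^j\boxtimes\top$ and $\Box^i\boxtimes^2\top\leq\Box^j\boxtimes^2\top$ for $i\leq j$ (the latter by taking $x=\boxtimes\top$), they are all the algebra needed. The convention is $\min\{k+a,\infty\}=k+a$, as before.

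The proof is by induction on the construction of $\triangle v$. The parity hypothesis on meets is inherited by subterms, so the induction hypothesis applies to them.

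For $\triangle v=v$, the degree is even, and $a=0$ works since $\mu(v)=\infty$.

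For $\triangle v=\Box\triangle_0 v$, the degree and parity are unchanged. Applying $\Box$ to the inductive identity shifts both $k+a_0$ and $\mu(\triangle_0)$ by one, so we take $a=a_0+1$, exactly as in Proposition \ref{prop_meet_godel}.

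For $\triangle v=\boxtimes\triangle_0 v$, we have $\mu(\triangle)=0$, and we take $a=0$. Write $m=\min\{k+a_0,\mu(\triangle_0)\}$; this is a natural number, since $k+a_0$ is finite. If $d(\triangle_0)$ is even, then $\triangle\Box^k\boxtimes\top=\boxtimes\Box^m\boxtimes\top=\boxtimes^2\top=\Box^0\boxtimes^2\top$, which is the claim for odd degree. If $d(\triangle_0)$ is odd, then $\triangle\Box^k\boxtimes\top=\boxtimes\Box^m\boxtimes^2\top=\boxtimes\top=\Box^0\boxtimes\top$, which is the claim for even degree.

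For $\triangle v=\triangle_0 v\land\triangle_1 v$, the hypothesis says $d(\triangle_0)$ and $d(\triangle_1)$ have the same parity. Hence $d(\triangle)=\max\{d(\triangle_0),d(\triangle_1)\}$ also has that parity, and both inductive values have the same base element, either $\boxtimes\top$ or $\boxtimes^2\top$. Since powers of $\Box$ applied to that base form a chain, the meet of $\Box^{\min\{k+a_0,\mu(\triangle_0)\}}b$ and $\Box^{\min\{k+a_1,\mu(\triangle_1)\}}b$ is the one with the smaller exponent. That exponent is $\min\{k+\min\{a_0,a_1\},\mu(\triangle)\}$, so we take $a=\min\{a_0,a_1\}$.

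The step I expect to need the most care is the $\boxtimes$ case. There one must check that the degree parity flips in step with the switch between the bases $\boxtimes\top$ and $\boxtimes^2\top$, and that the exponent $m$ is always finite so that Lemma \ref{lem_meet_reduction}(3) applies. The meet case is where the same-parity hypothesis is essential. Without it we would be comparing $\Box^i\boxtimes\top$ with $\Box^j\boxtimes^2\top$, whose meet need not have the stated form. This is consistent with Proposition \ref{prop_meet_degree_two_no_fp}.
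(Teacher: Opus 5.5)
Your proof is correct and follows the paper's own induction on the construction of $\triangle v$, with the same choices $a=0$, $a=a_0+1$, $a=0$, and $a=\min\{a_0,a_1\}$ in the four cases. Your explicit derivation of $\boxtimes\Box^m\boxtimes^2\top=\boxtimes^3\top=\boxtimes\top$, via Lemma \ref{lem_meet_reduction}(3) and Theorem \ref{2boxtimesfp}(1), supplies the even-degree $\boxtimes$ step that the paper covers only with ``similarly''.
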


\begin{proof}
We prove the two statements simultaneously by induction on the construction
of $\triangle v$.

If $\triangle v=v$, then $d(\triangle)=0$ and $\mu(\triangle) = \infty$.
Since $\triangle \Box^k \boxtimes \top = \Box^{\min \{k + 0, \infty\}} \boxtimes\top$, let $a = 0$. 

Suppose that the statement holds for $\triangle_0 v$ and $\triangle_1 v$ with parameters $a_0$ and $a_1$, respectively.

If $\triangle v$ is $\Box\triangle_0 v$, then $\mu(\triangle) = \mu(\triangle_0) + 1$. 
If $d(\triangle)$ is even, then $d(\triangle_0)$ is also even. 
By the induction hypothesis,  
\[
    \Box \triangle_0 \Box^k \boxtimes \top = \Box \Box^{\min\{k+a_0, \mu(\triangle_0)\}} \boxtimes \top = \Box^{\min\{k+a_0+1, \mu(\triangle_0)+1\}} \boxtimes \top.
\]
So, let $a = a_0+1$. 
The odd case is proved in the same way by letting $a = a_0 + 1$, with $\boxtimes^2\top$ in place of $\boxtimes\top$.

If $\triangle v=\boxtimes\triangle_0v$, then $\mu(\triangle)=0$. 
If $d(\triangle)$ is odd, then $d(\triangle_0)$ is even. 
By the induction hypothesis and Lemma \ref{lem_meet_reduction}(3),
\[
    \triangle\Box^k\boxtimes\top = \boxtimes \Box^{\min\{k+a_0, \mu(\triangle_0)\}}\boxtimes\top = \boxtimes^2\top = \Box^{\min\{k+0, 0\}} \boxtimes^2 \top. 
\]
Hence let $a = 0$. 

If $d(\triangle)$ is even, then $d(\triangle_0)$ is odd. 
Similarly, we have $\triangle\Box^k\boxtimes\top = \boxtimes\top$.
Thus let $a = 0$. 

If $\triangle v=\triangle_0v\land\triangle_1v$, then $\mu(\triangle) = \min \{\mu(\triangle_0), \mu(\triangle_1)\}$. 
By the assumption, $d(\triangle_0)$ and $d(\triangle_1)$ have the same
parity.

If $d(\triangle)$ is even, then both $d(\triangle_0)$ and $d(\triangle_1)$ are even. 
By the induction hypothesis, 
\begin{align*}
    \triangle\Box^k\boxtimes\top & = \Box^{\min\{k+a_0,\mu(\triangle_0)\}}\boxtimes\top
    \land \Box^{\min\{k+a_1,\mu(\triangle_1)\}}\boxtimes\top\\
    &= \Box^{\min\{k+a,\mu(\triangle)\}}\boxtimes\top.
\end{align*}
So, let $a=\min\{a_0,a_1\}$.
The odd case is proved in the same way by letting $a=\min\{a_0,a_1\}$.
\end{proof}

Lemma \ref{lem_meet_reduction2} allows us to extend Theorem \ref{thm_meet_parity}(1) to a class of one-variable meet APS terms.

\begin{thm}\label{thm_meet_even}
Let $S$ be a meet APS satisfying \textup{(C3*)}, and let $\triangle v$
be a one-variable meet APS term.
Suppose that, for every subterm of the form
$\triangle_0v\land\triangle_1v$, the degrees
$d(\triangle_0)$ and $d(\triangle_1)$ have the same parity.
If $d(\triangle)$ is even, then $\mathsf{E}_S(\triangle)$ holds.
\end{thm}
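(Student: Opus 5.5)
The plan mirrors the end of the proof of Proposition \ref{prop_meet_godel}, with Lemma \ref{lem_meet_reduction2}(1) replacing the induction used there. First we dispose of the case $\mu(\triangle)=\infty$. Then $\triangle v$ contains no occurrence of $\boxtimes$, so it is built from $v$ using only $\Box$ and $\land$. Since $\Box\top=\top$ by Proposition \ref{propbasicproperty}(1) and $\top\land\top=\top$, a trivial induction gives $\triangle\top=\top$. Thus $\top$ is a fixed point, and the parity hypothesis plays no role here.

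Now suppose $\mu(\triangle)<\infty$. The hypothesis on subterms of the form $\triangle_0 v\land\triangle_1 v$ is exactly what Lemma \ref{lem_meet_reduction2} requires. Since $d(\triangle)$ is even, part (1) of that lemma yields a natural number $a$ such that
\[
\triangle\Box^k\boxtimes\top=\Box^{\min\{k+a,\mu(\triangle)\}}\boxtimes\top
\]
for every $k\geq 0$. Take $k=\mu(\triangle)$. Since $a\geq 0$, we have $\min\{\mu(\triangle)+a,\mu(\triangle)\}=\mu(\triangle)$, so
\[
\triangle\Box^{\mu(\triangle)}\boxtimes\top=\Box^{\mu(\triangle)}\boxtimes\top.
\]
Hence $\Box^{\mu(\triangle)}\boxtimes\top$ is a fixed point of $\triangle v$, and $\mathsf{E}_S(\triangle)$ holds.

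Given Lemma \ref{lem_meet_reduction2}, there is no real obstacle, and all the substantive work sits in that lemma. The step to check carefully is whether applying Lemma \ref{lem_meet_reduction2} to $\triangle$ itself is legitimate. The induction in its proof passes through every subterm, and the parity hypothesis is inherited by subterms. Also, the degree-zero case inside the lemma is consistent with $\mu=\infty$ only when $k+a$ appears without truncation. These points are harmless, because in the main case we use only the instance $k=\mu(\triangle)$ with $\mu(\triangle)$ finite. We also note that Theorem \ref{thm_meet_parity}(1) is recovered as a special case, since a term with no $\land$ satisfies the hypothesis vacuously.
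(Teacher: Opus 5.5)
Your proposal is correct and is essentially the paper's proof. The paper splits on $d(\triangle)=0$ versus $d(\triangle)>0$, which is equivalent to your split on $\mu(\triangle)=\infty$ versus $\mu(\triangle)<\infty$. In the first case it takes $\top$ as the fixed point. In the second it applies Lemma \ref{lem_meet_reduction2}(1) with $k=\mu(\triangle)$, obtaining the fixed point $\Box^{\mu(\triangle)}\boxtimes\top$.
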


\begin{proof}
If $d(\triangle)=0$, then $\triangle v$ contains no occurrence of
$\boxtimes$, and hence $\triangle\top=\top$.

Suppose that $d(\triangle)>0$.
Then $\mu(\triangle) < \infty$.
By Lemma \ref{lem_meet_reduction2}, there exists a natural number $a$ such that
\[
    \triangle\Box^{\mu(\triangle)}\boxtimes\top = \Box^{\min\{\mu(\triangle)+a,\mu(\triangle)\}}\boxtimes\top = \Box^{\mu(\triangle)}\boxtimes\top.
\]
Thus $\Box^{\mu(\triangle)} \boxtimes\top$ is a fixed point of $\triangle v$.
\end{proof}

It remains open whether the other results obtained for one-variable APS terms in the original language extend to suitable one-variable meet APS terms.

\begin{prob}\label{prob_meet_terms}
Let $S$ be a meet APS satisfying \textup{(C3*)}, and let $\triangle v$ be a one-variable meet APS term such that, for every subterm of the form $\triangle_0 v \land \triangle_1 v$, the degrees $d(\triangle_0)$ and $d(\triangle_1)$ have the same parity.
\begin{enumerate}
    \item If $d(\triangle)$ is odd and $\mathsf{E}_S(\triangle)$ holds, does $\mathsf{E}_S(\boxtimes)$ hold?

    \item If $S$ is consistent and $d(\triangle)$ is even, then is $\triangle\top$ non-refutable in $S$?

    \item If $S$ is consistent, $d(\triangle)$ is odd, and $p$ is a fixed point of $\triangle v$, then is $p$ non-refutable in $S$?
\end{enumerate}
\end{prob}

\subsection{L\"ob's theorem}

A \emph{two-variable meet APS term} $\triangle(v,w)$ is a term constructed
from the variables $v$ and $w$ using $\Box$, $\boxtimes$, and $\land$.
We show that the existence of fixed points for a particular two-variable meet APS term is sufficient to derive an abstract form of L\"ob's theorem.
More precisely, we consider the term $\boxtimes(v\land w)$ and the following fixed-point condition:
\begin{description}
    \item[(FP)] For every $x\in L$, there exists $p\in L$ such that $p=\boxtimes(p\land x)$.
\end{description}

We also consider the following condition:
\begin{description}
    \item[(L\"ob)] For every $x\in L$, if $(\boxtimes x\land x)\leq\bot$, then $x \leq \bot$.
\end{description}

This condition is an abstract counterpart of L\"ob's theorem.
Indeed, under the intended interpretation $\boxtimes x=\Box\neg x$, (L\"ob) corresponds to L\"ob's theorem 
\[
    \text{if }\vdash\Box A\to A,\text{ then }\vdash A.
\]

\begin{thm}\label{thm_lob}
Let $S$ be a meet APS satisfying \textup{(C3*)} and \textup{(FP)}.
Then $S$ satisfies \textup{(L\"ob)}.
\end{thm}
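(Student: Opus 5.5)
The plan is the usual fixed-point proof of L\"ob's theorem, carried out with only (FP), (C3*), (C4), (C1), (C2) and (C5). Fix $x\in L$ with $(\boxtimes x\land x)\leq\bot$. By (FP), choose $p$ with $p=\boxtimes(p\land x)$. Under the intended reading, $p$ says ``$p\to\neg x$ is provable''. The strategy is to show first that $p\leq\boxtimes x$, then that $p\land x$ is refutable, and finally that $p$ is provable, which forces $x$ to be refutable.

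\emph{Step 1: $p\leq\boxtimes x$.} We apply (C3*) with $y=p$ and $z=x$, so we need $p\leq\Box p$ and $p\leq\boxtimes(p\land x)$.
\begin{itemize}
\item The second condition holds with equality, since $p=\boxtimes(p\land x)$.
\item For the first, (C4) gives $p=\boxtimes(p\land x)\leq\Box\boxtimes(p\land x)=\Box p$.
\end{itemize}
Hence (C3*) yields $p\leq\boxtimes x$.

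\emph{Step 2: $p\land x\leq\bot$.} From Step 1, $p\land x\leq\boxtimes x$, and trivially $p\land x\leq x$. Therefore $p\land x\leq \boxtimes x\land x\leq\bot$ by the hypothesis.

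\emph{Step 3: $p$ is provable and $x$ is refutable.} Applying (C1) to $p\land x\leq\bot$ gives $\boxtimes\bot\leq\boxtimes(p\land x)=p$. Together with (C2), this gives $\top\leq p$. By (C5), $x\leq\top\leq p$, so $x\leq p\land x\leq\bot$. This establishes (L\"ob).

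The step I expect to need the most care is Step 1, namely finding the right instance of (C3*). Taking $y=p$ and $z=x$ uses the fixed-point equation in exactly the shape $\boxtimes(y\land z)$ that (C3*) requires. The remaining premise $p\leq\Box p$ comes from (C4), because $p$ is itself of the form $\boxtimes(\cdot)$. Everything after Step 1 is routine order reasoning in the meet-semilattice.
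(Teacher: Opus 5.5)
Your proof is correct and is essentially the paper's argument step for step: (C4) gives $p\leq\Box p$, then (C3*) with $y=p$, $z=x$ gives $p\leq\boxtimes x$, hence $p\land x\leq\bot$, so $\top\leq\boxtimes\bot\leq p$ and $x\leq p\land x\leq\bot$. The only cosmetic difference is that you cite (C2) and (C5) directly, where the paper uses $\boxtimes\bot=\top$ and writes $x=\top\land x$.
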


\begin{proof}
Suppose that $(\boxtimes x\land x)\leq\bot$. 
By (FP), there exists $p\in L$ such that $p = \boxtimes(p\land x)$.
By (C4), $p\leq\Box p$.
Hence, by (C3*), $p\leq\boxtimes x$.
It follows that $(p\land x) \leq (\boxtimes x\land x) \leq \bot$.
By (C1), $\boxtimes\bot \leq \boxtimes(p\land x) = p$.
Since $\boxtimes\bot=\top$, we obtain $\top\leq p$.
Therefore $x = (\top\land x) \leq (p\land x) \leq \bot$.
Thus $S$ satisfies (L\"ob).
\end{proof}

The condition (FP) is stronger than the existence of a G\"odelian fixed point alone.

\begin{prop}\label{prop_FP_godel}
Let $S$ be a meet APS satisfying \textup{(FP)}.
Then $\mathsf{E}_S(\boxtimes)$ holds.
\end{prop}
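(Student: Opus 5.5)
The plan is to apply (FP) with the parameter $x=\top$. By (FP) there is an element $p\in L$ such that $p=\boxtimes(p\land\top)$. By (C5), $p\leq\top$, so the meet satisfies $p\land\top=p$. Substituting this into the fixed-point equation gives $p=\boxtimes p$. Thus $p$ is a G\"odelian fixed point, and $\mathsf{E}_S(\boxtimes)$ holds.

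The only point that needs checking is the identity $p\land\top=p$. This requires $\top$ to be an upper bound of $p$, which is exactly (C5), assumed throughout this part of the paper. Without (C5), $\top$ need not be the greatest element of $L$, and the argument would need a different choice of parameter. One fallback would be to take $x=\boxtimes\top$, or some other element known to lie above the relevant $p$. Since (C5) is in force, however, I expect no genuine obstacle, and no appeal to (C3*) or the hierarchy results of the earlier sections is needed.

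If one wants more information, Theorem \ref{BS2}(1) then identifies this fixed point as $p=\boxtimes\top$.
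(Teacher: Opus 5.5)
Your proof is correct and is exactly the paper's argument: apply (FP) with $x=\top$, then use $p\land\top=p$ (valid by (C5), which is assumed throughout) to obtain $p=\boxtimes p$. Your side remark about a fallback parameter when (C5) fails is not needed here.
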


\begin{proof}
Applying \textup{(FP)} to $\top$, we find $p\in L$ such that $p=\boxtimes(p\land\top)$.
Since $(p\land\top)=p$, we obtain $p=\boxtimes p$.
Thus $\mathsf{E}_S(\boxtimes)$ holds.
\end{proof}

Conversely, the existence of a G\"odelian fixed point yields a weaker
version of \textup{(FP)}.

\begin{prop}\label{prop_godel_weak_FP}
Let $S$ be a meet APS.
If $\mathsf{E}_S(\boxtimes)$ holds, then, for every $x\in L$, there exists
$p\in L$ such that $p=\boxtimes(p\land\boxtimes x)$.
\end{prop}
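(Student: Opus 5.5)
The plan is to take the candidate $p := \boxtimes\top$, the same element that served as the canonical fixed point throughout Section 5. Then $p=\boxtimes(p\land\boxtimes x)$ should follow from two facts already in hand, with no need for (C3*).

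The first step is to record the Gödelian identity. Since $\mathsf{E}_S(\boxtimes)$ holds, Proposition \ref{prop_hierarchy}(1) with $k=0$ gives $\boxtimes\boxtimes\top=\boxtimes\top$. Equivalently, Theorem \ref{BS2} shows that the Gödelian fixed point is $\boxtimes\top$.

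The second step is to simplify the meet. By Proposition \ref{propbasicproperty}(2) applied to $x$, we have $\boxtimes\top\leq\boxtimes x$. Hence, in the meet-semilattice, $\boxtimes\top\land\boxtimes x=\boxtimes\top$.

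Combining the two steps gives the required equation:
\[
\boxtimes(p\land\boxtimes x)=\boxtimes(\boxtimes\top\land\boxtimes x)=\boxtimes\boxtimes\top=\boxtimes\top=p .
\]
This holds for every $x\in L$, and notably the witness $p$ does not depend on $x$.

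I do not expect a real obstacle. The only point needing care is that the meet simplification uses the lower bound $\boxtimes\top\leq\boxtimes x$, which rests on (C5) through Proposition \ref{propbasicproperty}(2); (C5) is a standing assumption. This also explains why the weak version of (FP) is easy: the parameter enters only under an extra $\boxtimes$, so it is absorbed by $\boxtimes\top$. In full (FP) the parameter $x$ appears bare and need not lie above $\boxtimes\top$, so this argument does not apply there.
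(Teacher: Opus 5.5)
Your proposal is correct and matches the paper's proof: take $p=\boxtimes\top$, use $\boxtimes^2\top=\boxtimes\top$ (the paper cites Theorem \ref{BS2}(2), your appeal to Proposition \ref{prop_hierarchy}(1) with $k=0$ works equally well), and absorb the meet via $\boxtimes\top\leq\boxtimes x$. Nothing is missing.
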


\begin{proof}
Suppose that $\mathsf{E}_S(\boxtimes)$ holds. 
By Theorem \ref{BS2}(2), $\boxtimes^2\top=\boxtimes\top$.
Since $\boxtimes\top\leq\boxtimes x$, we have $\boxtimes\top\land\boxtimes x=\boxtimes\top$.
Therefore $\boxtimes(\boxtimes\top\land\boxtimes x) = \boxtimes^2\top = \boxtimes\top$.
\end{proof}

Conversely, (L\"ob) together with (C3*) does not even guarantee the existence of fixed points for terms of odd degree.

\begin{prop}\label{prop_lob_nofp}
There exists a consistent meet APS $S$ satisfying \textup{(C3*)} and \textup{(L\"ob)} such that $\mathsf{E}_S(\triangle)$ fails for every one-variable APS term $\triangle v$ of odd degree.
\end{prop}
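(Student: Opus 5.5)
We take the three-element chain $\bot < a < \top$ with $\Box$ the identity and
\[
  \boxtimes\bot=\top,\qquad \boxtimes a=\top,\qquad \boxtimes\top=a .
\]
Since a chain is a meet-semilattice with $x\land y=\min\{x,y\}$, this is a candidate meet APS. The plan is to verify, in order, that it satisfies the APS axioms and (C5), then (C3*), then (L\"ob), and finally that no term of odd degree has a fixed point. Consistency is immediate because $\top\not\leq\bot$.

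\textbf{APS axioms.} These are finite checks.
\begin{enumerate}
  \item (C1): $\Box$ is monotone because it is the identity. $\boxtimes$ sends $\bot,a,\top$ to $\top,\top,a$, so it is antitone.
  \item (C2): $\boxtimes\bot=\top$.
  \item (C4): $\Box$ is the identity, so $\boxtimes x\leq\Box\boxtimes x$ is trivial.
  \item (C5): $\top$ is the greatest element.
\end{enumerate}

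\textbf{(C3*).} Because $\Box$ is the identity, (C3*) reads $y\land\boxtimes(y\land z)\leq\boxtimes z$, and (C3) is the instance $z=\top$. We check by cases on $z$.
\begin{enumerate}
  \item If $z\in\{\bot,a\}$, then $\boxtimes z=\top$ and there is nothing to prove.
  \item If $z=\top$, we need $y\land\boxtimes y\leq a$. This holds for each $y$: for $y=\top$ the left side is $\top\land a=a$, for $y=a$ it is $a\land\top=a$, and for $y=\bot$ it is $\bot$.
\end{enumerate}

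\textbf{(L\"ob).} We compute $\boxtimes x\land x$ for each $x$.
\begin{enumerate}
  \item For $x=\top$ it is $a\land\top=a$.
  \item For $x=a$ it is $\top\land a=a$.
  \item For $x=\bot$ it is $\bot$.
\end{enumerate}
Since $a\not\leq\bot$, the hypothesis $(\boxtimes x\land x)\leq\bot$ holds only for $x=\bot$, and then the conclusion $x\leq\bot$ holds trivially.

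\textbf{No odd-degree fixed points.} Since $\Box$ is the identity, every one-variable APS term $\triangle v$ of odd degree $n$ acts as $\boxtimes^n$. Composing the table for $\boxtimes$ with itself gives
\[
  \boxtimes^2\bot=a,\qquad \boxtimes^2 a=a,\qquad \boxtimes^2\top=\top,
\]
and applying $\boxtimes$ once more gives $\boxtimes^3=\boxtimes$. Hence by induction every odd power $\boxtimes^n$ equals $\boxtimes$. Finally, $\boxtimes$ has no fixed point, since $\boxtimes\bot=\top$, $\boxtimes a=\top$ and $\boxtimes\top=a$. Therefore $\mathsf{E}_S(\triangle)$ fails for every term of odd degree.

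The only real obstacle is choosing the structure. The two-element APS of Figure \ref{fig2} already fails (L\"ob) at $x=\top$: there $\boxtimes\top=\bot$, so $(\boxtimes\top\land\top)\leq\bot$ while $\top\not\leq\bot$. (L\"ob) at $x=\top$ forces $\boxtimes\top$ to be non-refutable, which is why we insert the middle element $a=\boxtimes\top$. After that choice, every remaining step is a finite verification.
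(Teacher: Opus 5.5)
Your proposal is correct and is essentially the paper's proof. The paper uses the same three-element chain $\bot < p < \top$ with $\Box$ the identity, $\boxtimes\bot=\boxtimes p=\top$ and $\boxtimes\top=p$, and argues as you do that $\boxtimes^3=\boxtimes$, so every odd-degree term acts as $\boxtimes$, which has no fixed point; your case checks of the APS axioms, (C3*) and (L\"ob) simply spell out what the paper leaves as ``easy to check.''
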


\begin{proof}
The desired APS is given in Figure~\ref{fig8}.

\begin{center}
    \begin{tikzcd}
             \top \arrow [d, no head] \arrow[box edge, inner loop = 90] \arrow[boxtimes edge, d , bend left=30, shift left = 1 ]\\ 
             p \arrow[d, no head]  \arrow[boxtimes edge, u, bend left, shift left = 1] \arrow[box edge, inner loop = 0, yshift=-0.5ex] \\
             \bot \arrow[box edge, inner loop = 270] \arrow[boxtimes edge, uu, bend left = 30, shift left = 3] 
        \end{tikzcd}
\captionof{figure}{APS for Proposition \ref{prop_lob_nofp}}\label{fig8}
\end{center}

It is easy to check that this is a consistent meet APS satisfying (C3*) and (L\"ob). 
Let $\triangle v$ be a one-variable APS term of odd degree. 
Since $\Box$ is the identity, we have $\triangle x = \boxtimes^{d(\triangle)} x$ for every $x \in L$. 
Moreover, $\boxtimes x \in \{\top, p\}$ for every $x \in L$, and $\boxtimes$ swaps these two elements. 
Hence $\boxtimes^{3}v = \boxtimes v$. Combining these with the assumption that $d(\triangle)$ is odd, we obtain $\triangle v = \boxtimes v$. 
Since $\boxtimes$ has no fixed point, neither does $\triangle$.
\end{proof}

This contrasts with the situation in the G\"odel--L\"ob provability logic $\mathbf{GL}$.
By the de Jongh--Sambin Fixed Point theorem, every formula modalized in a variable has a fixed point in $\mathbf{GL}$.
Proposition \ref{prop_lob_nofp} shows that meet APSs satisfying L\"ob's theorem are not sufficient to recover the corresponding fixed-point phenomenon. 

\section{Concluding remarks}

In this paper, we have studied fixed points of APS terms and their connections with G2-like phenomena. 
Our main results can be summarized as follows. 
\begin{itemize}
    \item Every one-variable APS term of degree two has a fixed point, and explicit fixed points can be given for all such terms (Theorem \ref{2boxtimesfp}).
   
    \item The existence properties $\mathsf{E}_S(\boxtimes\Box^k)$ form a strict hierarchy (Propositions \ref{prop_hierarchy} and \ref{prop_strict}).
    Moreover, an appropriate level of this hierarchy guarantees the existence and uniqueness of fixed points for arbitrary one-variable APS terms of positive degree (Theorem \ref{thm_collapse} and Corollary \ref{cor_hierarchy_godel}).

    \item The hierarchy of the existence of fixed points yields corresponding G2-like non-refutability results (Propositions \ref{prop_g2_hierarchy} and \ref{prop_ge_general}).
    More generally, every term of even degree satisfies a non-refutability result without any fixed-point assumption (Theorem \ref{thm_even_nonrefutable}).
    On the other hand, Proposition \ref{prop_no_g2} shows that the analogous non-refutability result for terms of odd degree fails in general. 
    However, every fixed point of a term of odd degree is non-refutable (Theorem \ref{thm_odd_nonrefutable}).
    
    \item In meet APSs satisfying (C3*), every one-variable APS term in the original language of even degree has a fixed point, while a term of odd degree has a fixed point if and only if $\mathsf{E}_S(\boxtimes)$ holds (Theorem \ref{thm_meet_parity}). 
    Also, $\mathsf{E}_S(\boxtimes)$ implies the existence of a fixed point for every one-variable meet APS term (Proposition \ref{prop_meet_godel}). 
    Moreover, under the parity condition on meet subterms, every one-variable meet APS term of even degree has a fixed point in meet APSs satisfying \textup{(C3*)} (Theorem \ref{thm_meet_even}).

    \item Finally, the parametrized fixed-point condition \textup{(FP)} implies the abstract L\"ob condition \textup{(L\"ob)} in meet APSs satisfying (C3*) (Theorem \ref{thm_lob}).
    Although (FP) implies the existence of a G\"odelian fixed point (Proposition \ref{prop_FP_godel}), L\"ob's theorem does not conversely guarantee fixed points even for terms of odd degree (Proposition \ref{prop_lob_nofp}).
\end{itemize}

These results show that APSs provide a weak order-theoretic framework in which many fixed-point and G2-like phenomena can still be studied.
At the same time, some implications that hold in arithmetic and provability logic fail for APSs.
Thus APSs help us see which phenomena follow from the basic interaction between provability and refutability and which require additional assumptions.
To clarify further the position of APSs and to address problems that could not be analyzed within the present paper, the first author is developing proof-theoretic and relational-semantic analyses of APSs.
These results will be presented in a forthcoming paper.

\section*{Acknowledgments}

Some of the examples of APSs constructed in this paper were found through discussions with Claude (Anthropic) and ChatGPT (OpenAI). 
The authors verified all such examples and their proofs independently.
The second author was supported by JSPS KAKENHI Grant Number JP23K03200.

\bibliographystyle{plain}
\bibliography{references}

\begin{thebibliography}{1}

\bibitem{AB05}
Sergei~N. Artemov and Lev~D. Beklemishev.
\newblock Provability logic.
\newblock In D.~Gabbay and F.~Guenthner, editors, {\em Handbook of
  Philosophical Logic}, volume~13, pages 189--360. Springer, Dordrecht, 2nd
  edition, 2005.

\bibitem{beklemishev2016some}
L~Beklemishev and D~Shamkanov.
\newblock Some abstract versions of {G\"o}del’s second incompleteness theorem
  based on non-classical logics.
\newblock In {\em Liber Amicorum Alberti: A tribute to Albert Visser},
  volume~30, pages 15--29, 2016.

\bibitem{Bern76}
Claudio Bernardi.
\newblock The uniqueness of the fixed-point in every diagonalizable algebra.
\newblock {\em Studia Logica}, 35(4):335--343, 1976.

\bibitem{Bool93}
George {Boolos}.
\newblock {\em {The logic of provability}}.
\newblock Cambridge: Cambridge University Press, 1993.

\bibitem{DP02}
B.~A. Davey and H.~A. Priestley.
\newblock {\em Introduction to lattices and order.}
\newblock Cambridge: Cambridge University Press, 2nd edition, 2002.

\bibitem{Lind03}
Per Lindstr{\"o}m.
\newblock {\em Aspects of Incompleteness}.
\newblock Number 10 in Lecture Notes in Logic. A K Peters, 2nd edition, 2003.

\bibitem{Samb76}
Giovanni Sambin.
\newblock An effective fixed-point theorem in intuitionistic diagonalizable
  algebras.
\newblock {\em Studia Logica}, 35(4):345--361, 1976.

\end{thebibliography}

\end{document}